\newtheorem{defn}{Definition}[section]
\newtheorem{rem}[defn]{Remark}
\newtheorem{thm}[defn]{Theorem}
\newtheorem{lemma}[defn]{Lemma}
\newtheorem{prop}[defn]{Proposition}
\newtheorem{coro}[defn]{Corollary}
\newcommand{\ra}{\rightarrow}
\newcommand{\lra}{\longrightarrow}
\newcommand{\Ra}{\Rightarrow}
\newcommand{\midsp}{\;|\;}
\newcommand{\sub}[2]{#1_{{}_{#2}}}
\newcommand{\telos}{\hfill$\Box$}
\newcommand{\type}[1]{{\tt #1}}
\newcommand{\iso}{\backsimeq}
\newcommand{\val}[1]{\mbox{$\lsem #1\rsem$}}
\newcommand{\forces}{\Vdash}
\newcommand{\dforces}{\forces^{\!\!\partial}}
\newcommand{\yvval}[1]{\mbox{$\llparenthesis #1 \rrparenthesis $}}
\newcommand{\infrule}[2]{\frac{\mbox{\rm $#1$}}{\mbox{\rm $#2$}}}
\newcommand{\proves}{\vdash}
\newcommand{\upv}{\upVdash}
\newcommand{\rperp}{\mbox{${}^{\upv}$}}
\newcommand{\gphi}{{\mathcal  G}(Y)}
\newcommand{\gpsi}{{\mathcal  G}(X)}
\newcommand{\lperp}{{}\rperp}
\DeclareMathOperator{\filt}{Filt}
\DeclareMathOperator{\idl}{Idl}
\newcommand{\sqleq}{\sqsubseteq}
\title{Choice-free Dualities for Lattice Expansions:\\
Application to Logics with a Negation Operator}
\author{Chrysafis (Takis) Hartonas\\
Department of Digital Systems\\
University of Thessaly, Greece\\
$\type{hartonas@uth.gr}$}
\date{}
\begin{document}
\maketitle

\begin{abstract}
Constructive dualities have been recently proposed for some lattice based algebras and a related project has been outlined by Holliday and Bezhanishvili, aiming at obtaining ``choice-free spatial dualities for other classes of algebras [$\ldots$], giving rise to choice-free completeness proofs for non-classical logics''.

We present in this article a way to complete the Holliday-Bezhanishvili project  (uniformly, for any normal lattice expansion). This is done by recasting in a choice-free manner recent relational representation and duality results by the author. These results addressed the general representation and duality problem for lattices with quasi-operators,  extending the J\'{o}nsson-Tarski approach for BAOs, and Dunn's follow up approach for distributive generalized Galois logics, to contexts where distributivity may not be assumed. To illustrate, we apply the framework  to lattices (and their logics) with some form or other of a (quasi)complementation operator, obtaining correspondence results and canonical extensions in relational frames and choice-free dualities for lattices with a minimal, or a Galois quasi-complement, or involutive lattices, including De Morgan algebras, as well as Ortholattices and Boolean algebras, as special cases.
\end{abstract}

\section{Introduction}
\label{intro section}
\subsection{Background and Motivation}
Choice-free dualities have been lately proposed for Boolean algebras by Holliday and Bezhanishvili \cite{choice-free-BA}, for Ortholattices by MacDonald and Yamamoto \cite{choice-free-Ortho}, for modal lattices by Bezhanishvili, Dmitrieva, de Groot and Moraschini \cite{choice-free-dmitrieva-bezanishvili} and for De Vries algebras by Massas \cite{choice-free-deVries}.  These are part of a project, outlined by Holliday and Bezhanishvili and aiming at obtaining ``choice-free spatial dualities for other classes of algebras [$\ldots$], giving rise to choice-free completeness proofs for non-classical logics'' \cite[page~45]{choice-free-BA}. The project has its origins in Holliday's `possibility frames' for modal logic \cite{holliday-posibility-frames}, as noted in \cite{choice-free-BA}.

Choice was also assumed in this author's \cite{sdl,duality2} (Alexander's subbasis lemma, whose proof uses Zorn's lemma, was used to prove compactness of the space), but that use of choice is inessential and, in the present article, we recast the duality in a choice-free manner, switching from a Stone to a spectral topology. In addition, the case where the underlying lattice is distributive, or a Boolean algebra, was not treated in \cite{duality2} and we fill in for this gap here.

The results by this author \cite{dloa}, and with Dunn \cite{sdl}, are related to the Moshier-Jipsen duality \cite{Moshier2014a,Moshier2014b}, some detail on these relations has been presented in \cite[Remark~4.2, Remark~4.8]{duality2} and we revisit the issue in Proposition \ref{fsat prop} in this article. To put it briefly, both this author's recent duality results and \cite{Moshier2014a,Moshier2014b} have \cite{sdl,dloa} as their shared background, on which they build, with Moshier and Jipsen stating as their explicit objective to provide purely topological representation and duality results. They achieve this objective, freeing their dual spaces of any additional, non-topological structure. This is a duality approach in the spirit of Stone's original result \cite{stone2} on distributive lattice representation, as opposed to Priestley's representation, where an additional partial order structure is assumed in the dual space and to which the duality we presented in \cite{duality2} is closer in spirit.

The duality of \cite{Moshier2014a,Moshier2014b} is not intended, nor does it lend itself for logic related, relational semantics  applications, which are at the focus of this author's research. The main reasons are, first, that admissible interpretations take their values in stable (called saturated in \cite{Moshier2014a,Moshier2014b}) sets and their topological specification requires second-order principles as the closure of a set is defined as the intersection of all open (in the topology) filters of the space (which is partially ordered by the specialization order induced by the topology). Second, whereas  for semantic purposes one typically aims for first-order definable classes of relational frames, the choice made in \cite{Moshier2014a,Moshier2014b} is to represent lattice quasi-operators as strongly continuous and meet preserving point operators on the dual topological space. In addition, and as far as the purposes of the current article are concerned, it should be mentioned that according to \cite{choice-free-dmitrieva-bezanishvili} the Moshier-Jipsen duality is not choice-free and it has been Dmitrieva's contribution (see \url{ https://eprints.illc.uva.nl/id/eprint/1811/1/MoL-2021-18.text.pdf}) to provide a choice-free version of the duality of \cite{Moshier2014a}.

In recent years, this author has pursued a project of extending J\'{o}nsson and Tarski's framework for Boolean algebras with operators (BAOs) \cite{jt1,jt2} and Dunn's follow up research on distributive lattices with quasi-operators (the algebraic models of Dunn's generalized Galois logics, or gaggles) \cite{dunn-ggl,dunn-partial,gglbook} to the case of general lattices with quasi-operators (normal lattice operators, in our preferred terminology), building on older work by the author \cite{dloa} and with Dunn \cite{iulg,sdl}, while working within the framework of canonical extensions \cite{mai-harding}  of lattice expansions.

This project was concluded in \cite{kata2z,duality2}, it developed in parallel with Gehrke's (with co-workers) generalized Kripke frames approach (RS frames) \cite{mai-gen} and the relations between the two approaches have been detailed in \cite{kata2z}. We note that, as far as the objectives of the current article are concerned, the RS-frames approach builds on Hartung's lattice representation \cite{hartung}, which inherits from Urquhart's lattice representation \cite{urq} an essential use of the axiom of choice. In addition,  full categorical dualities encounter difficulties in the RS-frames approach, relating to defining frame morphisms, which is what had already prevented either Urquhart's \cite{urq}, or Hartung's \cite{hartung} lattice representation to be extended to a full duality result. A brief review of attempts to define a suitable notion of morphism for polarities is presented by Goldblatt \cite[Introduction, pages 2--3]{goldblatt-morphisms2019}.

The present article focuses on lattices with a quasi-complementation operator and their associated not necessarily distributive logics with weak forms of negation.

Negation in logic has been an issue of intense study, and by a number of authors, including Do\v{s}en \cite{dosen,dosen2} and Vakarelov \cite{vak-neg-Rasiowa,vak-nelson}, where the modal interpretation of negation was introduced, Orlowska \cite{orlowska-negation,orlowska-negation-survey} where Urquhart's lattice representation is extended to the case of quasi-complemented lattices, as well as Dunn and Zhou \cite{dunn-zhou}, offering a classification of various notions of negation, including extended literature reviews and article collections on the subject, such as Dunn \cite{dunn-comparative} and Wansing \cite{wansing-negation}, respectively. Purely lattice-theoretic interest in lattices with a quasi-complementation operator is also demonstrated in the literature, with research focusing on particular cases of interest such as ortholattices (and orthologic \cite{goldb}), de Morgan, Ockham  and Kleene algebras \cite{ockham-alg}, or pseudo-complemented lattices \cite{pseudo-lee_1970}.

More recent and related to the present article is Almeida \cite{Almeida09}, working with RS-frames, and a brief discussion of how the two approaches relate is given in the conclusions section.

\subsection{Contribution}
The issues we address in this article are, first,  completeness  in relational semantics, as a consequence of representation.

Second,  canonicity of the logics and their associated algebras, which is established by the fact that the constructed second duals of the algebras are proven to be their canonical extensions.

Third, first-order correspondence facts are proven for the variously axiomatized systems we examine.

Fourth, we demonstrate that the J\'{o}nsson-Tarski framework, generalized to the non-distributive case in \cite{duality2}, applies uniformly to systems that may be non-distributive, or distributive, or even Boolean algebras. This is done by establishing a first-order frame condition such that the full complex algebra of Galois stable sets of any frame satisfying the condition is a completely distributive lattice.

Finally, we prove a full categorical duality result in which dependence on non-constructive principles is eliminated, and thus our results contribute to the Holliday-Bezhanishvili project \cite{choice-free-BA} of choice-free dualities.

The dualities in this article are obtained using the duality framework of \cite{duality2}, which covers the minimal system by instantiation of the general results and it is then extended to the case where additional axioms are considered. 

Canonicity is established, first by the fact that the lattice representation of \cite{sdl} delivers a canonical lattice extension, as proven by Gehrke and Harding \cite{mai-harding} and for operators (quasi-complements, in the present case) we use the proof in \cite[Section~3.3]{kata2z} that the extension of a normal lattice operator as we define it is, in the terminology of \cite{mai-harding}, its $\sigma$-extension if its output type is 1 (it returns joins) and it is its $\pi$-extension if its output type is $\partial$ (it returns meets). Correspondence results are calculated from scratch. In \cite{conradie-palmigiano}, a generalization of algorithmic Shalqvist correspondence to non-distributive logics has been presented, but the authors restrict it to frame semantics that are either RS-frames (based on Hartung's representation) or TiRS graphs \cite{craig} (based on  Plo\v{s}\v{c}ica's \cite{plo} restructuring of Urquhart's \cite{urq} lattice representation using partial homomorphisms). We believe that combining the results of this article with the modal translation \cite{redm} of substructural logics into sorted residuated modal logic an alternative approach to the Sahlqvist theory for non-distributive logics can be provided, but we have not yet focused on this.

\subsection{Structure}
The present article is structured as follows.
In Section \ref{section: quasi-comp lats} we present the algebras of interest, bounded lattices with a quasi-complementation operator, and their associated logics. We single-out some distinguished cases, allowing for both a distributive (notably De Morgan and Boolean algebras) and a non-distributive lattice base (such as involutive, or orthocomplement lattices).

Section \ref{frames section} starts with the necessary preliminaries (Section \ref{section: frames review}) on sorted frames with relations and generated operators, drawing on \cite{duality2}, and continues with Section \ref{section: frames for quasi comp lats} where frames for quasi-complemented lattices are discussed. A first-order axiomatic specification of the class of frames with respect to which the minimal logic with a weak negation operator can be shown to be sound is provided in Table~\ref{quasiframe axioms table}. The axiomatization is subsequently extended for each of the cases we consider.

Section \ref{rep section} presents choice-free representations of semilattices (Section \ref{section: sem rep}) and bounded lattices (Section \ref{lat rep section}) and concludes with Section \ref{normal ops rep section} detailing the representation of arbitrary normal lattice operators, drawing on \cite{duality2}.

In Section \ref{rep quasi comp section} we apply the representation framework of section \ref{normal ops rep section} to the particular case of quasi-complementation operators on bounded lattices. The results of this section establish that the varieties of quasi-complemented lattices we consider are closed under canonical extensions. Thereby, choice-free  completeness theorems via a canonical model construction can be proven for the logics of the algebraic structures considered.

Spectral duality theorems are proven in Section \ref{section: spectral duality}. The main result in this section is Theorem~\ref{nuM duality}, where we detail the duality between the categories $\mathbf{M}$ of bounded lattices with a minimal quasi-complementation operator and the category $\mathbf{SRF}^*_{\nu\mathrm{M}}$ of sorted residuated frames whose first-order axiomatization is given in Table~\ref{frame axioms nuM}. The remaining dualities are then obtained. In particular, Theorem~\ref{upper bound rel prop} provides a first-order frame condition for the lattice of Galois stable sets to be completely distributive, which is then used for the cases of representation and duality for De Morgan algebras and Boolean algebras.

\section{Quasi-complemented Lattices and Logics}
\label{section: quasi-comp lats}
Where $\mathbf{L}$ is a bounded lattice and $\{1,\partial\}$ is a 2-element set, we let $\mathbf{L}^1=\mathbf{L}$ and $\mathbf{L}^\partial=\mathbf{L}^{\mathrm{op}}$ (the opposite lattice). Extending established terminology \cite{jt1}, a function $f:\mathbf{L}_1\times\cdots\times\mathbf{L}_n\lra\mathbf{L}_{n+1}$, where each $\mathbf{L}_i$ is a bounded lattice, will be called {\em additive} and {\em normal}, or a {\em normal operator}, if it distributes over finite joins of the lattice $\mathbf{L}_i$, for each $i=1,\ldots n$, delivering a join in the lattice $\mathbf{L}_{n+1}$.

\begin{defn}\rm
\label{normal lattice op defn}
An $n$-ary operation $f$ on a bounded lattice $\mathbf L$ is {\em a normal lattice operator of distribution type  $\delta(f)=(i_1,\ldots,i_n;i_{n+1})\in\{1,\partial\}^{n+1}$}  if it is a normal additive function  $f:{\mathbf L}^{i_1}\times\cdots\times{\mathbf L}^{i_n}\lra{\mathbf L}^{i_{n+1}}$ (distributing over finite joins in each argument place), where  each $i_j$, for  $j=1,\ldots,n+1$,   is in the set $\{1,\partial\}$, hence ${\mathbf L}^{i_j}$ is either $\mathbf L$, or ${\mathbf L}^\partial$.

If $\tau$ is a tuple (sequence) of distribution types, a {\em normal lattice expansion of (similarity) type $\tau$} is a lattice with a normal lattice operator of distribution type $\delta$ for each $\delta$ in $\tau$.

The {\em category {\bf NLE}$_\tau$}, for a fixed similarity type $\tau$, has normal lattice expansions of type $\tau$ as objects. Its morphisms are the usual algebraic homomorphisms.
\end{defn}

In this article we focus on the class of lattices $\mathbf{L}=(L,\leq,\wedge,\vee,0,1,\nu)$ with a minimal quasi-complementation operator $\nu$, of increasing axiomatization strength, including at least the following:
\begin{tabbing}
\hskip1cm\=(antitonicity)\hskip1cm\= $a\leq b$ $\lra$ $\nu b\leq\nu a$\\
\>(normality) \> $\nu 0=1$\\
\>($\vee\wedge$)\> $\nu(a\vee b)\leq\nu a\wedge\nu b$.
\end{tabbing}
Given antitonicity, the operation $\nu$ satisfies the identity $\nu(a\vee b)=\nu a\wedge\nu b$, hence it is a normal lattice operator of distribution type $\delta(\nu)=(1;\partial)$.

\begin{rem}[Dual Minimal]\rm\label{dual minimal rem}
A treatment of the dual case, studied in \cite{dunn-zhou},
\begin{tabbing}
\hskip1cm\=(antitonicity)\hskip1cm\= $a\leq b$ $\lra$ $\nu b\leq\nu a$\\
\>(dual normality) \> $\nu 1=0$\\
\>($\wedge\vee$)\> $\nu(a\wedge b)\leq\nu a\vee\nu b$.
\end{tabbing}
follows easily from the results of this article (see, in particular Remark~\ref{unnecessity rem}, Figure~\ref{sigma-pi-canonical} and the proof of Theorem~\ref{fig 1 theorem}),  by observing that a dual minimal quasi-complement on a lattice $\mathbf{L}$ is a minimal quasi-complement on its dual $\mathbf{L}^\mathrm{op}$ (order reversed). Details for this case are left to the interested reader.  Note that given antitonicity the identity $\nu(a\wedge b)=\nu a\vee\nu b$ is derivable, hence the distribution type of $\nu$ in this case is $\delta(\nu)=(\partial;1)$.
\end{rem}
\begin{rem}[Involution]\rm\label{involution rem}
In the representation of a quasi-complemented lattice the difference between the cases of a minimal, or a dual minimal quasi-complement, following the representation framework of \cite{duality2}, is that in the first case, where the operator returns a meet, we represent the quasi-complement with its $\pi$-extension, whereas we use the $\sigma$-extension (in the sense of \cite{mai-harding}) for lattices with a dual-minimal quasi-complement (which returns a join). Both will be considered in the case of involutive lattices and proving that the representation of the lattice quasi-complement is an involution on the lattice of Galois stable sets reduces to verifying that the $\sigma$ and $\pi$-extensions of the lattice involution are identical.
\end{rem}

We list some basic facts about some quasi-complemented lattices of interest in Lemma \ref{neg basic facts lemma}.
\begin{lemma}\rm\label{neg basic facts lemma}
  Let $\mathbf{L}=(L,\leq,\wedge,\vee,0,1,\nu)$ be a bounded lattice with an antitone operation $\nu$.
  \begin{enumerate}
    \item $\nu$ forms a Galois connection on $\mathbf{L}$ ($a\leq\nu b$ iff $b\leq\nu a$) iff it satisfies the inequation $a\leq\nu\nu a$.
    \item If $a\leq\nu\nu a$ holds in the lattice, then the normality axiom $\nu 0=1$ and the identity $\nu(a\vee b)=\nu a\wedge\nu b$ are derivable.
    \item If $\nu\nu a\leq a$ holds in the lattice, then the identity $\nu(a\wedge b)=\nu a\vee\nu b$ is derivable.
    \item If either of the De Morgan identities $\nu(a\vee b)=\nu a\wedge\nu b$, or $\nu(a\wedge b)=\nu a\vee\nu b$, holds in the lattice, then antitonicity of $\nu$ is a derivable property.
    \item If $\nu$ is an involution ($a=\nu\nu a$) and, in addition,  the lattice is distributive, then it is a {\em De Morgan algebra}.
    \item If $\nu$ is an involution and, in addition, it satisfies the intuitionistic explosion principle (ex falso quidlibet) $a\wedge\nu a\leq 0$, then the lattice is an {\em Ortholattice} (orthocomplemented lattice).
    \item If $\nu$ is an involution satisfying the antilogism rule ($a\wedge b\leq c\lra a\wedge\nu c\leq\nu b$), then the lattice is a {\em Boolean algebra}.
  \end{enumerate}
\end{lemma}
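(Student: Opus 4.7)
The plan is to dispatch the seven items essentially in order, with each resting on only a small amount of preceding material and one simple algebraic trick.

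For (1), assume antitonicity. If $\nu$ is Galois, instantiating $b := \nu a$ in the implication $b \leq \nu a \Rightarrow a \leq \nu b$ yields $a \leq \nu\nu a$. Conversely, given $a \leq \nu\nu a$, if $a \leq \nu b$ then antitonicity gives $\nu\nu b \leq \nu a$, and chaining with $b \leq \nu\nu b$ produces $b \leq \nu a$; the reverse direction is identical. For (2), with (1) in hand $\nu \colon \mathbf{L} \to \mathbf{L}^{\mathrm{op}}$ has a left adjoint (namely itself), so it carries any existing joins of $\mathbf{L}$ to the corresponding meets in $\mathbf{L}^{\mathrm{op}}$, yielding both $\nu(a \vee b) = \nu a \wedge \nu b$ and $\nu 0 = 1$. (A direct check of normality: $0 \leq \nu 1$ is trivial, and Galois flips it to $1 \leq \nu 0$.) Item (3) is the exact dual in $\mathbf{L}^{\mathrm{op}}$, proved by the same argument. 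For (4), assuming $\nu(a \vee b) = \nu a \wedge \nu b$ and $a \leq b$, rewrite $b = a \vee b$ to get $\nu b = \nu a \wedge \nu b \leq \nu a$; the other De Morgan case is symmetric, using $a = a \wedge b$.

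Item (5) is then immediate from the definition of a De Morgan algebra, since both De Morgan laws are supplied by (2) and (3) once $\nu$ is an involution. For (6), involution plus (2)--(3) gives $\nu(a \vee \nu a) = \nu a \wedge \nu\nu a = \nu a \wedge a \leq 0$; applying $\nu$ and using involution together with $\nu 0 = 1$ produces $a \vee \nu a \geq 1$, so combined with the hypothesis $a \wedge \nu a \leq 0$ we obtain the orthocomplement laws.

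The main work sits in item (7), and this is where I expect the only genuinely non-routine step. The key observation is that antilogism together with involution is a residuation law in disguise: $a \wedge b \leq c$ iff $a \wedge \nu c \leq \nu b$ iff $b \leq \nu(a \wedge \nu c)$, where the last equivalence uses the Galois property from (1). Hence $c \mapsto \nu(a \wedge \nu c)$ is a right adjoint to $a \wedge (-)$, so $\wedge$ preserves arbitrary existing joins and the lattice is distributive; in fact it is a Heyting algebra with implication $a \to c := \nu(a \wedge \nu c)$. Instantiating $c := 0$ and using $\nu 0 = 1$ yields $a \to 0 = \nu a$, identifying $\nu a$ as the Heyting pseudo-complement, which gives $a \wedge \nu a = 0$ automatically. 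Involution then upgrades pseudo-complement to complement: the excluded middle $a \vee \nu a = 1$ follows by applying $\nu$ to the De Morgan identity $\nu(a \vee \nu a) = \nu a \wedge a = 0$ and using $\nu 0 = 1$. Thus $\mathbf{L}$ is a Boolean algebra, completing the lemma.
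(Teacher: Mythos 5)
Your proposal is correct and, for item (7) — the only part the paper proves in detail — it takes essentially the same route: read antilogism plus involution as residuation of $\wedge$ with $a\to c:=\nu(a\wedge\nu c)$, deduce distributivity, then obtain explosion and excluded middle (you get explosion from the pseudo-complement adjunction rather than from $a\wedge 1\leq a$ via antilogism, a trivial variation). Your write-ups of (1)--(6), which the paper leaves to the reader, are also correct.
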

\begin{proof}
Each of the claims (1) to (6) has a straightforward proof, left to the interested reader.   For (7), the hypothesis implies that $a\wedge b\leq c$ iff $a\wedge\nu c\leq\nu b$. This means that $\wedge$ is self-conjugate with respect to the involution $\nu$. To see that this implies distributivity, define $a\ra c=\nu(a\wedge\nu c)$ and observe that the conjugacy condition is equivalent to residuation of $\wedge$ and $\ra$, i.e. $a\wedge b\leq c$ iff $a\wedge\nu c\leq\nu b$ iff $b\leq a\ra c$. Distribution then follows from residuation. In addition, by part (2), $\nu 0=1$ holds and then also $\nu 1=\nu\nu 0=0$. Hence the intuitionistic principle $a\wedge\nu a\leq 0=\nu 1$ follows, since we can infer it from $a\wedge 1\leq a$ using antilogism. By the hypothesis that $\nu$ is an involution, the explosion principle $a\wedge\nu a\leq 0$ is equivalent to excluded middle $a\vee\nu a=1$. Hence the lattice is a Boolean algebra.
\end{proof}

Figure \ref{quasi-complements figure} summarizes the above results, where $\mathbb{DMA,O,INV}$ designate the equational classes (varieties) of De Morgan algebras, Ortholattices and lattices with an involution, respectively, $\mathbb{BA}$ designates the variety of Boolean algebras, the remaining two labels $\mathbb{M,G}$ designate the varieties of lattices with a minimal, or a Galois connected quasi-complementation operator, respectively, and the arrow label (dist) indicates addition of the distribution law $a\wedge(b\vee c)\leq(a\wedge b)\vee(a\wedge c)$. We use $\mathbb{M}^\partial, \mathbb{G}^\partial$ for the cases of the varieties of lattices with a dual minimal, or dual Galois, quasi-complement. A representation result for quasi-complemented lattices in the varieties $\mathbb{M}^\partial, \mathbb{G}^\partial$ can be easily obtained given the results in this article, and it is in fact indirectly given in the course of our presentation of the case of lattices with an involution. In the sequel, we focus on the varieties above $\mathbb{M}$ (included).

\begin{figure}[t]
  \caption{(Quasi)Complemented Lattices}\label{quasi-complements figure}
\mbox{}\\
\xymatrix{
&& \mathbb{BA}
\\
&\mathbb{DMA}\ar@{-}[ur]^{a\wedge\nu a=0} &
{{\frac{a\wedge b\leq c}{a\wedge\nu c\leq\nu b}}}\ar@{-}[u]
& \mathbb{O}\ar@{-}[ul]_{\mathrm{\!\!(dist)}}
\\
&&\mathbb{INV}\ar@{-}[ur]_{\! a\wedge\nu a=0}\ar@{-}[ul]^{\mathrm{(dist)}}
\ar@{-}[u]
\\
&&{\begin{array}{c}
    \mathbb{G}\\ 
    a\leq\nu\nu a
    \end{array}}
\ar@{-}[u]^{\nu\nu a\leq a}
&& {\begin{array}{c}
    \mathbb{G}^\partial\\
    \nu\nu a\leq a
    \end{array}}\ar@{.}[ull]_{a\leq\nu\nu a}
\\
&& {\begin{array}{c}
    \mathbb{M}\\
    \nu 0=1 \\
    \nu(a\vee b)=\nu a\wedge\nu b
  \end{array}}\ar@{-}[u]
&& {\begin{array}{c}
    \mathbb{M}^\partial\\
    \nu 1=0\\
    \nu(a\wedge b)=\nu a\vee\nu b
    \end{array}}\ar@{.}[u]
\\
&& a\leq b \lra \nu b\leq\nu a\ar@{.}[u]\ar@{.}[urr]
}
\vskip2mm
\hrulefill
\end{figure}

The propositional language $\mathcal{L}$ of quasi-complemented lattices is defined on a countable set $P$ of propositional variables by the schema
\[
\varphi:=p\;(p\in P)\midsp\top\midsp\bot\midsp\varphi\wedge\varphi\midsp\varphi\vee\varphi\midsp\neg\varphi.
\]
A valuation $v$ of the propositional language in an algebra $\mathbf{L}$ in the variety $\mathbb{M}$ (and any subvariety) is a map $v:P\lra L$. An interpretation $V$ of the propositional language is the unique homomorphism from the term algebra (the absolutely free algebra) on $P$ and into $\mathbf{L}$ extending $v$  to all sentences.

A formal equation is a string of the form $\varphi\approx\psi$ and a formal inequation $\varphi\preceq\psi$ is defined as usual by $\varphi\approx\varphi\wedge\psi$. A quasi-complemented lattice $\mathbf{L}$ validates a formal inequation, written $\varphi\models_\mathbf{L}\psi$ iff for any valuation $v$ we obtain $V(\varphi)\leq V(\psi)$.

A logic $\mathbf{\Lambda}$ in the language $\mathcal{L}$ is a set of pairs $(\varphi,\psi)$, which we write as $\varphi\proves_\mathbf{\Lambda}\psi$ and refer to them as (symmetric) sequents, containing the initial sequents and closed under the rules of Table~\ref{logic system} and under substitution. $\mathbf{M}$ designates the minimal logic in the above sense.

\begin{table}[!htbp]
\caption{Initial sequents and rules for the minimal logic $\mathbf{M}$}
\label{logic system}
\mbox{}\\[5mm]
\begin{tabular}{llllll}
$\varphi\proves\varphi$ & $\varphi\proves\top$ & $\bot\proves\varphi$ & $\infrule{\varphi\proves\psi\hskip4mm\psi\proves\vartheta}{\varphi\proves\vartheta}$ & $\infrule{\varphi\proves\psi}{\neg\psi\proves\neg\varphi}$\\[2mm]
$\varphi\wedge\psi\proves\varphi$ & $\varphi\wedge\psi\proves\psi$ & $\varphi\proves\varphi\vee\psi$ & $\psi\proves\varphi\vee\psi$ & $\neg(\varphi\vee\psi)\proves\neg\varphi\wedge\neg\psi$\\[2mm]
$\top\proves\neg\bot$
\end{tabular}

\hrulefill
\end{table}
A sequent $\varphi\proves\psi$ is valid in a quasi-complemented lattice  $\mathbf{L}$, written $\varphi\forces_\mathbf{L}\psi$, iff for any interpretation $v:P\lra L$, the inequation $V(\varphi)\leq V(\psi)$ holds in $\mathbf{L}$. A set of sequents $\Gamma$ (in particular, a logic $\mathbf{\Lambda}$) is valid in $\mathbf{L}$, written $\mathbf{L}\forces\Gamma$, if $\varphi\forces_\mathbf{L}\psi$ for every $(\varphi,\psi)\in\Gamma$. Evidently, the sequent $\varphi\proves\psi$ is valid iff the inequation $\varphi\preceq\psi$ is valid, in symbols $\varphi\forces_\mathbf{L}\psi$ iff $\varphi\models_\mathbf{L}\psi$.

A sequent $\varphi\proves\psi$ is valid in a subvariety $\mathbb{V}$ of $\mathbb{M}$ ($\mathbb{V}\in\texttt{Sub}\mathbb{M}$), written $\varphi\forces_\mathbb{V}\psi$, iff for any algebra $\mathbf{L}\in\mathbb{V}$ it holds that $\varphi\forces_\mathbf{L}\psi$. Validity of a set of sequents (in particular of a logic), or of a set of formal (in)equations, in  a subvariety $\mathbb{V}$ of $\mathbb{M}$ is defined analogously.

\begin{prop}\rm
The logic $\mathbf{M}$ is sound and complete in the variety $\mathbb{M}$.
\end{prop}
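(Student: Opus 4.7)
The plan is to establish soundness and completeness separately, by standard algebraic means.

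For soundness, I would proceed by induction on the length of a derivation $\varphi\proves_\mathbf{M}\psi$, showing that every initial sequent of Table~\ref{logic system} is validated by every $\mathbf{L}\in\mathbb{M}$ and that every inference rule preserves validity. The sequents $\varphi\proves\varphi$, $\varphi\proves\top$, $\bot\proves\varphi$ correspond to reflexivity of $\leq$ and to the lattice bounds; the $\wedge,\vee$ sequents translate into the standard lattice inequalities $a\wedge b\leq a,b$ and $a,b\leq a\vee b$; the sequent $\neg(\varphi\vee\psi)\proves\neg\varphi\wedge\neg\psi$ is exactly the $(\vee\wedge)$ axiom defining $\mathbb{M}$; and $\top\proves\neg\bot$ expresses normality $\nu 0=1$. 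Transitivity of $\proves$ is inherited from that of $\leq$, the contraposition rule corresponds precisely to antitonicity of $\nu$, and closure under substitution is immediate from the observation that a substitution followed by a valuation is again a valuation.

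For completeness I would employ a Lindenbaum-Tarski construction. Let $\mathbf{F}_P$ denote the absolutely free $\mathcal{L}$-algebra on $P$, and define $\varphi\equiv\psi$ iff $\varphi\proves_\mathbf{M}\psi$ and $\psi\proves_\mathbf{M}\psi$---more precisely, iff both $\varphi\proves_\mathbf{M}\psi$ and $\psi\proves_\mathbf{M}\varphi$ hold. A routine verification, using transitivity, the (derivable) standard lattice rules, and the contraposition rule, shows that $\equiv$ is a congruence on $\mathbf{F}_P$. The quotient $\mathbf{L}_\mathbf{M}=\mathbf{F}_P/{\equiv}$ inherits a bounded lattice structure together with a unary operation $\nu[\varphi]=[\neg\varphi]$, and one checks that $\mathbf{L}_\mathbf{M}\in\mathbb{M}$: antitonicity of $\nu$ follows from contraposition, the normality identity $\nu 0=1$ from the initial sequent $\top\proves\neg\bot$ combined with the instance $\neg\bot\proves\top$ of $\varphi\proves\top$, and $\nu(a\vee b)\leq\nu a\wedge\nu b$ from the explicit axiom $\neg(\varphi\vee\psi)\proves\neg\varphi\wedge\neg\psi$.

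Taking the canonical valuation $v_c:P\lra L_\mathbf{M}$ given by $p\mapsto[p]$, a straightforward induction on the complexity of $\varphi$ yields $V_c(\varphi)=[\varphi]$. Consequently, if $\varphi\forces_\mathbb{M}\psi$, then in particular $\varphi\forces_{\mathbf{L}_\mathbf{M}}\psi$, whence $[\varphi]=V_c(\varphi)\leq V_c(\psi)=[\psi]$ in $\mathbf{L}_\mathbf{M}$, i.e.\ $\varphi\proves_\mathbf{M}\psi$, as required. The only mild obstacle is the bookkeeping verification that $\equiv$ is a congruence and that $\mathbf{L}_\mathbf{M}$ lies in $\mathbb{M}$; this is entirely syntactic, term-algebraic, and involves no appeal to choice, fitting naturally into the choice-free programme pursued throughout the article.
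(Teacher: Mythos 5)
Your proposal is correct and follows essentially the same route as the paper: the paper likewise dismisses soundness as immediate and establishes completeness via the Lindenbaum--Tarski quotient by the congruence $\varphi\equiv\psi$ iff both $\varphi\proves\psi$ and $\psi\proves\varphi$ are provable, leaving the details (which you have filled in correctly) to the reader. The only blemish is the typo ``$\psi\proves_\mathbf{M}\psi$'' in your first statement of the congruence, which you immediately correct.
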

\begin{proof}
Soundness is immediate. For completeness we appeal to the Lindenbaum-Tarski construction, observing that (given the antitonicity rule) the relation $\varphi\equiv\psi$ iff $\varphi\proves\psi$ and $\psi\proves\varphi$ are both provable is a congruence. Details are straightforward.
\end{proof}

If $\Gamma$ is a set of sequents, then $\mathbf{M}+\Gamma$ designates the logic that results by adding members of $\Gamma$ as new initial sequents. Designate by $\texttt{Ext}\mathbf{M}$ the family of such extensions of $\mathbf{M}$ and observe that \texttt{Ext}$\mathbf{M}$ is a complete lattice under intersection.

Every extension $\mathbf{\Lambda}=\mathbf{M}+\Gamma$ determines a subvariety $\mathbb{V}\in\texttt{Sub}\mathbb{M}$ whose equational theory is $\mathcal{E}(\mathbb{V})=\mathcal{E}(\mathbb{M})\cup\{\varphi\preceq\psi\midsp (\varphi,\psi)\in\Gamma\}$.

\begin{defn}\rm
The maps $\mathrm{V}:\texttt{Ext}\mathbf{M}\leftrightarrows\texttt{Sub}\mathbb{M}:\mathrm{\Lambda}$ are defined on a logic $\mathbf{\Lambda}\in\texttt{Ext}\mathbf{M}$ and a variety $\mathbb{V}\in\texttt{Sub}\mathbb{M}$ by
\begin{align}
\mathrm{V}(\mathbf{\Lambda})&=\{\mathbf{L}\in\mathbb{M}\midsp\forall(\varphi,\psi)\in\mathbf{\Lambda}\;\varphi\forces_\mathbf{L}\psi\}\\
\mathrm{\Lambda}(\mathbb{V})&=\{(\varphi,\psi)\midsp\varphi\forces_\mathbb{V}\psi\}
\end{align}
\end{defn}

\begin{prop}[Definability]\rm
\label{definability}
For any subvariety $\mathbb{V}\in\texttt{Sub}\mathbb{M}$ and any quasi-complemented lattice $\mathbf{L}\in\mathbb{M}$, $\mathbf{L}\in\mathbb{V}$ iff $\mathbf{L}\forces\mathrm{\Lambda}(\mathbb{V})$. Equivalently, $\mathrm{V\Lambda}(\mathbb{V})=\mathbb{V}$.
\end{prop}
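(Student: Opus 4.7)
The plan is to prove the two inclusions of $\mathrm{V\Lambda}(\mathbb{V})=\mathbb{V}$ separately, exploiting throughout the observation already recorded in the text that sequent validity and inequation validity coincide: $\varphi\forces_\mathbf{L}\psi$ iff $\varphi\models_\mathbf{L}\psi$. The inclusion $\mathbb{V}\subseteq\mathrm{V}(\mathrm{\Lambda}(\mathbb{V}))$ is essentially a tautology: if $\mathbf{L}\in\mathbb{V}$ and $(\varphi,\psi)\in\mathrm{\Lambda}(\mathbb{V})$, then by definition of $\mathrm{\Lambda}(\mathbb{V})$ we have $\varphi\forces_{\mathbb{V}}\psi$, so in particular $\varphi\forces_\mathbf{L}\psi$; thus $\mathbf{L}\forces\mathrm{\Lambda}(\mathbb{V})$, i.e.\ $\mathbf{L}\in\mathrm{V}(\mathrm{\Lambda}(\mathbb{V}))$.

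For the converse inclusion $\mathrm{V}(\mathrm{\Lambda}(\mathbb{V}))\subseteq\mathbb{V}$, I would argue as follows. Since $\mathbb{V}\in\texttt{Sub}\mathbb{M}$ is by assumption a (sub)variety, it is equationally definable relative to $\mathbb{M}$: there is a set $E$ of inequations $\varphi\preceq\psi$ such that, for every $\mathbf{L}'\in\mathbb{M}$, one has $\mathbf{L}'\in\mathbb{V}$ iff $\mathbf{L}'$ satisfies every inequation in $E$. (One may simply take $E=\mathcal{E}(\mathbb{V})$, the full equational theory of $\mathbb{V}$ in the present signature, which is the canonical choice and avoids any appeal to a finite axiomatization.) Each $\varphi\preceq\psi$ in $E$ is by construction valid in every member of $\mathbb{V}$, so the corresponding sequent $\varphi\proves\psi$ lies in $\mathrm{\Lambda}(\mathbb{V})$.

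Now suppose $\mathbf{L}\in\mathrm{V}(\mathrm{\Lambda}(\mathbb{V}))$, i.e.\ $\mathbf{L}\in\mathbb{M}$ and $\mathbf{L}\forces\mathrm{\Lambda}(\mathbb{V})$. For every $\varphi\preceq\psi$ in $E$ the associated sequent $(\varphi,\psi)$ belongs to $\mathrm{\Lambda}(\mathbb{V})$, hence $\varphi\forces_\mathbf{L}\psi$, which (by the equivalence of sequent and inequation validity) gives $\varphi\models_\mathbf{L}\psi$. Therefore $\mathbf{L}$ satisfies every inequation in $E$, and so by the defining property of $E$ we conclude $\mathbf{L}\in\mathbb{V}$. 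This proves $\mathrm{V\Lambda}(\mathbb{V})=\mathbb{V}$, which is exactly the equivalence $\mathbf{L}\in\mathbb{V}\Leftrightarrow\mathbf{L}\forces\mathrm{\Lambda}(\mathbb{V})$ in the statement.

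The only potentially tricky point, and so the place I would be most careful, is the implicit step that a subvariety of $\mathbb{M}$ is determined by its set of valid inequations in the present signature. This is not a deep obstacle --- it is an instance of the standard fact from universal algebra that a variety equals the class of models of its equational theory --- but it is the hinge on which the proof turns, and it is what justifies taking $E=\mathcal{E}(\mathbb{V})$ above. Everything else is bookkeeping between sequents $(\varphi,\psi)$ and inequations $\varphi\preceq\psi$ under the translation $\varphi\preceq\psi\equiv\varphi\approx\varphi\wedge\psi$ already fixed in the text.
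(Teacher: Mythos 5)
Your proof is correct and follows essentially the same route as the paper's: the forward direction is immediate from the definition of $\mathrm{\Lambda}(\mathbb{V})$, and the converse uses exactly the paper's argument that each inequation axiomatizing $\mathbb{V}$ yields a sequent in $\mathrm{\Lambda}(\mathbb{V})$, whose validity in $\mathbf{L}$ forces $\mathbf{L}$ to satisfy the equational theory of $\mathbb{V}$. Your explicit flagging of the hinge --- that a subvariety is determined by its equational theory --- is a point the paper leaves implicit, but the substance is identical.
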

\begin{proof}
If $\mathbf{L}\in\mathbb{V}$, then clearly $\mathbf{L}\forces\mathrm{\Lambda}(\mathbb{V})$. Conversely, if $\mathbf{L}\forces\mathrm{\Lambda}(\mathbb{V})$ and $\varphi\preceq\psi$ is any inequation in the axiomatization of $\mathbb{V}$, then the fact that $\varphi\models_\mathbb{V}\psi$ iff $\varphi\forces_\mathbb{V}\psi$ implies that $\varphi\forces_\mathbf{L}\psi$ and so $\mathbf{L}$ validates the equational theory of $\mathbb{V}$, hence $\mathbf{L}\in\mathbb{V}$.
\end{proof}

\begin{prop}[Completeness]\rm
\label{completeness}
For every logic $\mathbf{\Lambda}\in\texttt{Ext}\mathbf{M}$ and every pair of sentences $(\varphi,\psi)$, $\varphi\proves\psi\in\mathbf{\Lambda}$ iff $\varphi\forces_{\mathrm{V}(\mathbf{\Lambda})}\psi$. Equivalently, $\mathrm{\Lambda V}(\mathbf{\Lambda})=\mathbf{\Lambda}$.
\end{prop}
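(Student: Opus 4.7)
The plan is to mirror the soundness-and-completeness argument used for $\mathbf{M}$ itself in the preceding proposition, but relativized to an arbitrary extension $\mathbf{\Lambda}=\mathbf{M}+\Gamma$. Both inclusions reduce to the same Lindenbaum--Tarski construction.

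For the inclusion $\mathbf{\Lambda}\subseteq\mathrm{\Lambda V}(\mathbf{\Lambda})$ (soundness), I would argue by induction on derivations in $\mathbf{\Lambda}$ that every provable sequent is valid in every $\mathbf{L}\in\mathrm{V}(\mathbf{\Lambda})$. The initial sequents and rules of Table~\ref{logic system} are valid in any quasi-complemented lattice by the defining axioms of $\mathbb{M}$, while the additional initial sequents coming from $\Gamma$ are valid in every $\mathbf{L}\in\mathrm{V}(\mathbf{\Lambda})$ by the very definition of $\mathrm{V}(\mathbf{\Lambda})$. Closure under substitution is immediate because validity is preserved under substitution of terms.

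For the converse inclusion $\mathrm{\Lambda V}(\mathbf{\Lambda})\subseteq\mathbf{\Lambda}$ (completeness), I would build the Lindenbaum--Tarski algebra $\mathbf{L}_\mathbf{\Lambda}$ as the quotient of the term algebra by the relation $\varphi\equiv_\mathbf{\Lambda}\psi$ iff both $\varphi\proves_\mathbf{\Lambda}\psi$ and $\psi\proves_\mathbf{\Lambda}\varphi$ are derivable. As observed in the completeness proof for $\mathbf{M}$, antitonicity of $\neg$ together with the lattice rules ensure that $\equiv_\mathbf{\Lambda}$ is a congruence with respect to $\wedge,\vee$ and $\neg$. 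Define $[\varphi]\leq[\psi]$ iff $\varphi\proves_\mathbf{\Lambda}\psi$ and verify that $\mathbf{L}_\mathbf{\Lambda}=(L/{\equiv_\mathbf{\Lambda}},\wedge,\vee,\bot,\top,\neg)$ is a bounded lattice with a minimal quasi-complementation, i.e.\ a member of $\mathbb{M}$.

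The step I expect to require most care is showing that $\mathbf{L}_\mathbf{\Lambda}$ actually lies in $\mathrm{V}(\mathbf{\Lambda})$, i.e.\ that it satisfies every sequent in $\mathbf{\Lambda}$. For this I would take the canonical valuation $v_\mathbf{\Lambda}(p)=[p]$, whose homomorphic extension satisfies $V_\mathbf{\Lambda}(\varphi)=[\varphi]$ by a straightforward induction on $\varphi$. Then for any $(\varphi',\psi')\in\mathbf{\Lambda}$ and any valuation $v:P\to L_\mathbf{\Lambda}$, I pick representatives $\chi_p\in v(p)$, let $\sigma$ be the substitution $p\mapsto\chi_p$, and use closure of $\mathbf{\Lambda}$ under substitution to get $\sigma\varphi'\proves_\mathbf{\Lambda}\sigma\psi'$, i.e.\ $[\sigma\varphi']\leq[\sigma\psi']$ in $\mathbf{L}_\mathbf{\Lambda}$, which is exactly $V(\varphi')\leq V(\psi')$ under $v$. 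Hence $\mathbf{L}_\mathbf{\Lambda}\in\mathrm{V}(\mathbf{\Lambda})$.

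To finish, suppose $\varphi\forces_{\mathrm{V}(\mathbf{\Lambda})}\psi$. Then in particular $\varphi\forces_{\mathbf{L}_\mathbf{\Lambda}}\psi$, so under the canonical valuation $[\varphi]=V_\mathbf{\Lambda}(\varphi)\leq V_\mathbf{\Lambda}(\psi)=[\psi]$, meaning $\varphi\proves_\mathbf{\Lambda}\psi$. Combining both inclusions yields $\mathrm{\Lambda V}(\mathbf{\Lambda})=\mathbf{\Lambda}$, which is the stated equivalence.
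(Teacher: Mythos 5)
Your proposal is correct and follows essentially the same route as the paper: the forward direction is immediate (the paper gets it directly from the definition of $\mathrm{V}(\mathbf{\Lambda})$, whereas you also run an induction on derivations, which is harmless), and the converse is the same Lindenbaum--Tarski argument, for which you usefully spell out the detail the paper leaves implicit, namely that closure of $\mathbf{\Lambda}$ under substitution is what puts the Lindenbaum algebra inside $\mathrm{V}(\mathbf{\Lambda})$.
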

\begin{proof}
If $\varphi\proves\psi$ is in $\mathbf{\Lambda}$, then by definition of $\mathrm{V}(\mathbf{\Lambda})$ it follows that $\varphi\forces_{\mathrm{V}(\mathbf{\Lambda})}\psi$.
Conversely, if $\mathbf{\Lambda}=\mathbf{M}+\Gamma$, then the equational axiomatization of its associated variety of quasi-complemented lattices is given by $\mathcal{E}=\mathcal{E}(\mathbf{M})\cup\{\varphi\preceq\psi\midsp (\varphi,\psi)\in\Gamma\}$. Thus $\mathbb{V}\in\mathrm{V}(\mathbf{\Lambda})$ iff $\mathbb{V}\models\mathcal{E}$. If $\varphi\forces_{\mathrm{V}(\mathbf{\Lambda})}\psi$, then in particular $\varphi\forces_{\mathbf{Lind}}\psi$, where $\mathbf{Lind}$ is the Lindenbaum-Tarski algebra of the logic $\mathbf{\Lambda}$, from which we infer $\varphi\proves\psi$ is in $\mathbf{\Lambda}$.
\end{proof}

\begin{prop}[Duality]\rm
\label{duality}
The maps $\mathrm{V,\Lambda}$ constitute a complete lattice dual isomorphism $\mathrm{V}:\texttt{Ext}\mathbf{M}\iso\texttt{Sub}\mathbb{M}^\mathrm{op}:\mathrm{\Lambda}$.
\end{prop}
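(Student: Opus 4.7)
The plan is to leverage the two immediately preceding propositions, which already do essentially all the work, and to supply only the routine order-theoretic glue needed to upgrade the two ``round-trip'' identities $\mathrm{V}\mathrm{\Lambda}(\mathbb{V})=\mathbb{V}$ and $\mathrm{\Lambda}\mathrm{V}(\mathbf{\Lambda})=\mathbf{\Lambda}$ to a full complete lattice dual isomorphism. The first step is to note that both $\texttt{Ext}\mathbf{M}$ and $\texttt{Sub}\mathbb{M}$ are complete lattices under intersection: this is already observed in the excerpt for $\texttt{Ext}\mathbf{M}$, and it is standard that subvarieties of a fixed variety, being equational classes, are closed under arbitrary intersection and thus form a complete lattice (meets are intersections, joins are the variety generated by the union).

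Next I would verify that $\mathrm{V}$ and $\mathrm{\Lambda}$ are antitone. If $\mathbf{\Lambda}_1\subseteq\mathbf{\Lambda}_2$ and $\mathbf{L}\in\mathrm{V}(\mathbf{\Lambda}_2)$, then $\mathbf{L}$ validates every sequent in $\mathbf{\Lambda}_2$, hence in particular every sequent in $\mathbf{\Lambda}_1$, so $\mathbf{L}\in\mathrm{V}(\mathbf{\Lambda}_1)$; thus $\mathrm{V}(\mathbf{\Lambda}_2)\subseteq\mathrm{V}(\mathbf{\Lambda}_1)$. Symmetrically, if $\mathbb{V}_1\subseteq\mathbb{V}_2$ and $(\varphi,\psi)\in\mathrm{\Lambda}(\mathbb{V}_2)$, then $\varphi\forces_{\mathbb{V}_2}\psi$, which entails $\varphi\forces_{\mathbb{V}_1}\psi$, so $\mathrm{\Lambda}(\mathbb{V}_2)\subseteq\mathrm{\Lambda}(\mathbb{V}_1)$.

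Combining antitonicity with Propositions \ref{definability} and \ref{completeness}, the maps $\mathrm{V}$ and $\mathrm{\Lambda}$ are mutually inverse antitone bijections between $\texttt{Ext}\mathbf{M}$ and $\texttt{Sub}\mathbb{M}$. Any order-isomorphism between complete lattices automatically preserves arbitrary meets and joins, so in particular the antitone bijection $\mathrm{V}:\texttt{Ext}\mathbf{M}\to\texttt{Sub}\mathbb{M}^{\mathrm{op}}$ sends intersections of logics to the join (in $\texttt{Sub}\mathbb{M}$) of the corresponding varieties, and the join of a family of logics to the intersection of their associated varieties. Spelling this out: given a family $\{\mathbf{\Lambda}_i\}_{i\in I}$, one checks directly that $\mathrm{V}(\bigcap_i\mathbf{\Lambda}_i)=\bigvee_i\mathrm{V}(\mathbf{\Lambda}_i)$ using the round-trip identities, and dually for $\mathrm{\Lambda}$ on families of subvarieties.

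I do not anticipate a real obstacle here; the content has been entirely absorbed by Propositions \ref{definability} and \ref{completeness}, and what remains is a standard order-theoretic repackaging. The only point worth being a little careful about is not conflating the two lattice structures on $\texttt{Sub}\mathbb{M}$: meets are intersections, but joins are the variety generated by the union (equivalently, the intersection of all subvarieties containing the union), and it is with respect to this join that the identity $\mathrm{V}(\bigcap_i\mathbf{\Lambda}_i)=\bigvee_i\mathrm{V}(\mathbf{\Lambda}_i)$ is to be read.
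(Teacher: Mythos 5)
Your proposal is correct, and its skeleton matches the paper's: antitonicity of $\mathrm{V}$ and $\mathrm{\Lambda}$, plus the two round-trip identities from Propositions \ref{definability} and \ref{completeness}, yield a mutually inverse pair of antitone bijections. The one genuine difference lies in how the final, complete-lattice part of the claim is discharged. You invoke the general order-theoretic fact that a bijection between complete lattices which is antitone with antitone inverse is automatically a dual isomorphism preserving arbitrary meets and joins (with joins in $\texttt{Sub}\mathbb{M}$ read as generated varieties, as you rightly caution). The paper instead verifies the single identity $\mathrm{V}\left(\bigcap_{i\in I}\mathbf{\Lambda}_i\right)=\bigvee_{i\in I}\mathrm{V}(\mathbf{\Lambda}_i)$ by hand: it writes each $\mathbb{V}_i=\mathrm{V}(\mathbf{M}+\Gamma_i)$ via its equational axiomatization $\mathcal{E}(\mathbb{V}_i)=\mathcal{E}(\mathbf{M})\cup\{\varphi\preceq\psi\midsp(\varphi,\psi)\in\Gamma_i\}$ and identifies $\bigvee_{i\in I}\mathbb{V}_i$ as the class of algebras satisfying $\bigcap_{i\in I}\mathcal{E}(\mathbb{V}_i)$, which is then matched against validity of $\bigcap_{i\in I}\mathbf{\Lambda}_i$. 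Your abstract shortcut is legitimate and somewhat cleaner (it also handles meets and joins on both sides in one stroke), while the paper's computation has the side benefit of making explicit what the join of subvarieties looks like in terms of equational theories; either route is a complete proof given the two preceding propositions.
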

\begin{proof}
Extensions of logics correspond to sub-quasivarieties and this accounts for the antitonicity of $\mathrm{V,\Lambda}$. By Propositions~\ref{definability} and~\ref{completeness}, $\mathrm{V,\Lambda}$ are inverses of each other, hence we have a bijection between $\texttt{Ext}\mathbf{M}$ and $\texttt{Sub}\mathbb{M}$.

Let $\mathbf{\Lambda}=\bigcap_{i\in I}\mathbf{\Lambda}_i=\bigcap_{i\in I}(\mathbf{M}+\Gamma_i)$ and $\mathbb{V}_i=\mathrm{V}(\mathbf{\Lambda}_i)$. The equational axiomatization of $\mathbb{V}_i$ is then $\mathcal{E}(\mathbb{V}_i)=\mathcal{E}(\mathbf{M})\cup\{\varphi\preceq\psi\midsp(\varphi,\psi)\in\Gamma_i\}$.

Since $\bigvee_{i\in I}\mathbb{V}_i=\{\mathbf{L}\in\mathbb{M}\midsp\mathbf{L}\models\bigcap_{i\in I}\mathcal{E}(\mathbb{V}_i)\}$ it follows that $\mathbf{L}\in\bigvee_{i\in I}\mathbb{V}_i$ iff $\mathbf{L}\models\mathcal{E}(\mathbb{M})\cup\bigcap_{i\in I}\mathcal{E}(\mathbb{V}_i)$ iff $\varphi\models_\mathbf{L}\psi$ for any $\varphi\preceq\psi$ in $\mathcal{E}=\mathcal{E}(\mathbb{M})\cup\bigcap_{i\in I}\mathcal{E}(\mathbb{V}_i)$. Given that $\mathcal{E}=\bigcap_{i\in I}(\mathcal{E}(\mathbb{M})\cup\mathcal{E}(\mathbb{V}_i))$ we have, equivalently, that $\mathbf{L}\in\bigvee_{i\in I}\mathbb{V}_i$ iff $\mathbf{L}\forces\bigcap_{i\in I}(\mathbf{M}+\Gamma_i)$ iff $\mathbf{L}\forces\bigcap_{i\in I}\mathbf{\Lambda_i}$. This shows that $\mathrm{V}\left(\bigcap_{i\in I}\mathbf{\Lambda}_i\right)=\bigvee_{i\in I}\mathrm{V}(\mathbf{\Lambda}_i)$.
\end{proof}

This dual isomorphism allows us to switch from logics to varieties of quasi-complemented lattices and back without any loss of information.

\section{Sorted Residuated Frames (SRFs)}
\label{frames section}
Relational semantics for logics possibly without distribution rely on respective representation theorems, in the sense that for an algebraizable logic completeness is established by a canonical model construction, the underlying frame of which is preciseley the frame constructed in the related representation of its Lindenbaum-Tarski algebra. 

The first bounded lattice representation result has been given by Urquhart \cite{urq}, using doubly-ordered Stone spaces, and where the points of the canonical frame are maximally disjoint filter-ideal pairs.  Sorted frames have been employed in both Hartung's \cite{hartung} lattice representation and in the representation due to this author and Dunn \cite{iulg,sdl}.  In either case, lattices are identified as sublattices of the complete lattice of {\em stable sets}, which are exactly the fixpoints of the closure operator generated by composition of the two maps of a Galois connection defined on the frame.

Some basic preliminaries on sorted frames are necessary, detailed in the next section, before we can define (sorted) frames for quasi-complemented lattices. In this article, since no other kind of frame is involved, we use `sorted frame', `polarity', `sorted residuated frame', or just `frame' interchangeably.

\subsection{Sorted Frame Preliminaries}
\label{section: frames review}
Regard $\{1,\partial\}$ as a set of sorts and let $Z=(Z_1,Z_\partial)$ be a sorted set.
Sorted residuated frames $\mathfrak{F}=(Z_1,\upv,Z_\partial)$ are triples consisting of nonempty sets $Z_1=X,Z_\partial=Y$ and a binary relation ${\upv}\subseteq X\times Y$.

The relation $\upv$ will be referred to as the {\em Galois relation} of the frame. It generates a Galois connection $(\;)\rperp:\powerset(X)\leftrightarrows\powerset(Y)^\partial:\lperp(\;)$ ($V\subseteq U\rperp$ iff $U\subseteq\lperp V$)
\begin{tabbing}
\hskip2cm\=$U\rperp$\hskip2mm\==\hskip1mm\=$\{y\in Y\midsp\forall x\in U\; x\upv y\}$ \hskip1mm\==\hskip1mm\= $\{y\in Y\midsp U\upv y\}$\\
\>$\lperp V$\>=\>$\{x\in X\midsp \forall y\in V\;x\upv y\}$\>=\>$\{x\in X\midsp x\upv V\}$.
\end{tabbing}
We will also have use for the complement $I$ of the Galois relation $\upv$ and we will designate frames using either the Galois relation $\upv$, or its complement $I$. A subset $A\subseteq X$ will be called {\em stable} if $A={}\rperp(A\rperp)$. Similarly, a subset $B\subseteq Y$ will be called {\em co-stable} if $B=({}\rperp B)\rperp$. Stable and co-stable sets will be referred to as {\em Galois sets}, disambiguating to {\em Galois stable} or {\em Galois co-stable} when needed and as appropriate. By $\gpsi,\gphi$ we designate the complete lattices of stable and co-stable sets, respectively. Note that the Galois connection restricts to a dual isomorphism $(\;)\rperp:\gpsi\iso\gphi^\partial:{}\rperp(\;)$.

Structures $(X,\upv,Y)$ are referred to as {\em polarities}, following Birkhoff \cite{birkhoff}, also as {\em formal contexts} in Formal Concept Analysis (FCA) \cite{wille2} and we use the standard FCA priming notation for each of the two Galois maps ${}\rperp(\;),(\;)\rperp$. This allows for stating and proving results for each of $\gpsi,\gphi$ without either repeating definitions and proofs, or making constant appeals to duality. Thus for a Galois set $G$, $G'=G\rperp$, if $G\in\gpsi$ ($G$ is a Galois stable set), and otherwise $G'={}\rperp G$, if $G\in\gphi$ ($G$ is a Galois co-stable set).

For an element $u$ in either $X$ or $Y$ and a subset $W$, respectively of $Y$ or $X$, we write $u|W$, under a well-sorting assumption, to stand for either $u\upv W$ (which stands for $u\upv w$, for all $w\in W$), or $W\upv u$ (which stands for $w\upv u$, for all $w\in W$), where well-sorting means that either $u\in X, W\subseteq Y$, or $W\subseteq X$ and $u\in Y$, respectively. Similarly for the notation $u|v$, where $u,v$ are elements of different sort.

Preorder relations are induced on each of the sorts, by setting for $x,z\in X$, $x\preceq z$ iff $\{x\}\rperp\subseteq\{z\}\rperp$ and, similarly, for $y,v\in Y$, $y\preceq v$ iff ${}\rperp\{y\}\subseteq{}\rperp\{v\}$.
A (sorted) frame is called {\em separated} if the preorders $\preceq$ (on $X$ and on $Y$) are in fact partial orders $\leq$.

We use $\Gamma$ to designate  upper closure  $\Gamma U=\{z\in X\midsp\exists x\in U\;x\preceq z\}$, for $U\subseteq X$, and similarly for $U\subseteq Y$. The set $U$ is {\em increasing} (an upset) iff $U=\Gamma U$. For a singleton set $\{x\}\subseteq X$ we write $\Gamma x$, rather than $\Gamma(\{x\})$ and similarly for $\{y\}\subseteq Y$.

\begin{lemma}[\hskip-0.1mm{\cite[Lemma~3.3]{duality2}}]\rm
\label{basic facts}
Let $\mathfrak{F}=(X,\upv,Y)$ be a  polarity and $u$ a point in $Z=X\cup Y$.
\begin{enumerate}
\item $\upv$ is increasing in each argument place (and thereby its complement $I$ is decreasing in each argument place).
\item $(\Gamma u)'=\{u\}'$, while $\Gamma u=\{u\}^{\prime\prime}$ is a Galois set.
\item Galois sets are increasing, i.e. $u\in G$ implies $\Gamma u\subseteq G$.
\item For a Galois set $G$, $G=\bigcup_{u\in G}\Gamma u$.
\item For a Galois set $G$, $G=\bigvee_{u\in G}\Gamma u=\bigcap_{v|G}\{v\}'$.
\item For a Galois set $G$ and any set $W$, $W^{\prime\prime}\subseteq G$ iff $W\subseteq G$.\telos
\end{enumerate}
\end{lemma}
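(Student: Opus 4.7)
The plan is to prove the six items in order, leaning on the standard identities for Galois connections: $W \subseteq W''$, $W \subseteq V \Rightarrow V' \subseteq W'$, and $W' = W'''$. Throughout I treat the case $u \in X$ explicitly; the case $u \in Y$ is symmetric by the primed notation, which is precisely what that notation was introduced to enable.

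For (1), I would observe that the statement is essentially built into the definition of $\preceq$. If $x \preceq z$ and $x \upv y$, then $y \in \{x\}' \subseteq \{z\}'$, so $z \upv y$; likewise on the right by the dual definition. Monotonicity of $\upv$ immediately yields antitonicity of $I$. For (2), the inclusion $\Gamma u \subseteq \{u\}''$ follows by unfolding: $z \in \Gamma u$ means $\{u\}' \subseteq \{z\}'$, which is precisely the condition $z \upv y$ for every $y \in \{u\}'$. Conversely, $z \in \{u\}''$ gives $\{u\}' \subseteq \{z\}'$, i.e., $u \preceq z$. The identity $(\Gamma u)' = \{u\}'$ then follows either by applying $(\;)'$ to $\Gamma u = \{u\}''$ and using $W''' = W'$, or directly: one inclusion uses $u \in \Gamma u$, the other uses monotonicity of $\upv$ in the first coordinate.

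For (3), if $G$ is stable and $u \in G$, then $G' \subseteq \{u\}'$ by antitonicity, so $\Gamma u = \{u\}'' \subseteq G'' = G$ (the co-stable case is symmetric via priming). Item (4) is then immediate: $\bigcup_{u \in G} \Gamma u \subseteq G$ by (3), and reflexivity of $\preceq$ gives $u \in \Gamma u \subseteq \bigcup_{u \in G}\Gamma u$. For the first equality of (5), recall that joins in $\mathcal{G}(X)$ are taken as the Galois closure of the union; so applying $(\;)''$ to (4) gives $G = G'' = \bigl(\bigcup_{u \in G}\Gamma u\bigr)'' = \bigvee_{u \in G}\Gamma u$. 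For the second equality, unpack the well-sorting convention: $v | G$ with $v \in Y$ means $v \in G'$, and then $z \in \bigcap_{v \in G'}\{v\}'$ iff $z \upv v$ for every $v \in G'$, which is exactly $z \in G'' = G$. Finally, (6) follows from the standard chain $W \subseteq W''$ (so $W'' \subseteq G \Rightarrow W \subseteq G$) together with monotonicity of $(\;)''$ (so $W \subseteq G \Rightarrow W'' \subseteq G'' = G$).

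There is no real obstacle here; the whole lemma is a bookkeeping exercise in the Galois machinery, and the only thing to be mildly careful about is the sorting discipline — that $v | G$ presupposes $v$ and the points of $G$ live on opposite sides of the polarity, and that the primed notation silently selects between $\rperp(\;)$ and $(\;)\rperp$ depending on which side a Galois set sits. Once that bookkeeping is fixed, each clause reduces to one or two applications of the three Galois identities listed at the start.
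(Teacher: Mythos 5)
Your proof is correct and complete. Note that the paper itself supplies no proof of this lemma: it is imported verbatim from the cited reference (Lemma~3.3 of the duality paper) and the statement is closed with an end-of-proof mark, so there is nothing in this article to compare your argument against. Your verification is the standard one — each clause follows from the three Galois identities you list, plus the observation that $z\in\{u\}''$ unfolds exactly to $\{u\}'\subseteq\{z\}'$, i.e.\ to $u\preceq z$, which gives $\Gamma u=\{u\}''$ and makes (2)--(6) routine — and your attention to the sorting discipline (that $v|G$ places $v$ on the opposite side of the polarity from $G$, and that the prime silently selects between $(\;)\rperp$ and $\rperp(\;)$) is exactly the only point where care is needed.
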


It is typical in the context of canonical extensions of lattices to refer to principal upper sets $\Gamma x\in\gpsi\; (x\in X= \filt(\mathbf{L}))$, as {\em filter}, or {\em closed elements} of $\gpsi$ and to sets ${}\rperp\{y\}\in\gpsi\; (y\in Y=\idl(\mathbf{L}))$ as {\em open}, or {\em ideal} elements of $\gpsi$, and similarly for sets $\Gamma y, \{x\}\rperp$ with $x\in X, y\in Y$. This risks creating an unfortunate confusion with topological terminology and we shall have to rely on context to disambiguate. In addition, we shall always use ``closed set'' and ``open set'' when topologically closed/open sets are meant. Furthermore, a closed element $\Gamma x$ is said to be {\em clopen} if $\Gamma x={}\rperp\{y\}$ for some $y\in Y$, which is unique when the frame is separated.

By Lemma \ref{basic facts}, the closed elements of $\gpsi$  join-generate $\gpsi$, while the open elements meet-generate $\gpsi$ (similarly for $\gphi$).

For a sorted relation $R\subseteq\prod_{j=1}^{j=n+1}Z_{i_j}$, where $i_j\in\{1,\partial\}$ for each $j$ (and thus $Z_{i_j}=X$ if $i_j=1$ and $Z_{i_j}=Y$ when $i_j=\partial$), we make the convention to regard it as a relation $R\subseteq Z_{i_{n+1}}\times\prod_{j=1}^{j=n}Z_{i_j}$, we agree to write its sort type as $\sigma(R)=(i_{n+1};i_1\cdots i_n)$ and for a tuple of points of suitable sort we write $uRu_1\cdots u_n$ for $(u,u_1,\ldots,u_n)\in R$.

\begin{defn}[Galois dual relation] \label{Galois dual relations}\rm
For an $(n+1)$-ary sorted relation $R$ its {\em Galois dual relation} $R'$ is defined by $uR'v_1\cdots v_n$ iff $\forall w\;(wRv_1\cdots v_n\lra w|u)$. In other words, $R'\vec{v}=(R\vec{v})'$, letting $\vec{v}=v_1\cdots v_n$.
\end{defn}

\begin{defn}[Sections of relations]\rm
\label{sections defn}
For an $(n+1)$-ary relation $R^\sigma$ (of sort $\sigma$) and an $n$-tuple $\vec{u}$, $R^\sigma\vec{u}=\{w\midsp wR^\sigma\vec{u}\}$ is the {\em section} of $R^\sigma$ determined by $\vec{u}$. To designate a section of the relation at the $k$-th argument place we let $\vec{u}[\_]_k$ be the tuple with a hole at the $k$-th argument place. Then $wR^\sigma\vec{u}[\_]_k=\{v\midsp wR^\sigma\vec{u}[v]_k\}\subseteq Z_{i_k}$ is the $k$-th section of $R^\sigma$.
\end{defn}

\begin{thm}\rm
\label{upper bound rel prop}
Let $\mathfrak{F}=(X,\upv,Y)$ be a sorted frame (a polarity) and $\gpsi$ the complete lattice of stable sets. Let $R_\leq$ be the ternary upper bound relation on $X$ defined by $uR_\leq xz$ iff both $x\preceq u$ and $z\preceq u$.  If all sections of the Galois dual relation $R'_\leq$ of $R_\leq$ are Galois sets, then $\gpsi$ is completely distributive.
\end{thm}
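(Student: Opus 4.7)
The plan is to first translate the hypothesis on $R'_\leq$ into a frame-theoretic property of $\gpsi$ and then to invoke the polarity structure to strengthen it to complete distributivity.

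First, I would compute the relevant sections explicitly. The output-place section of $R_\leq$ at inputs $(v_1,v_2)$ is $R_\leq v_1 v_2 = \Gamma v_1 \cap \Gamma v_2$, i.e.\ the meet of two closed elements in $\gpsi$, so its Galois dual is $R'_\leq v_1 v_2 = (\Gamma v_1 \cap \Gamma v_2)^\perp$, automatically Galois co-stable. The hypothesis therefore has substantive content only at the two input places, asserting that
\[
R'_\leq[\_]v u = \{x \in X : \Gamma x \cap \Gamma v \subseteq {}\rperp\{u\}\}
\]
and its symmetric counterpart are Galois stable subsets of $X$.

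Second, I would identify the operator $F_{R_\leq}$ generated by $R_\leq$ on $\gpsi$ as binary meet $\cap$. For any $A,B \in \gpsi$, upward closure of stable sets gives $\bigcup_{x \in A, z \in B}(\Gamma x \cap \Gamma z) \subseteq A \cap B$, and for any $u \in A \cap B$ one has $u \in \Gamma u \cap \Gamma u = R_\leq uu$, whence $F_{R_\leq}(A,B) = A \cap B$. By the general framework of \cite{duality2}, Galois-stability of the input-place sections of $R'_\leq$ is precisely the condition for $F_{R_\leq}$ to be a normal lattice operator distributing over arbitrary joins in each argument. Hence $\gpsi$ satisfies the frame law $A \cap \bigvee_i B_i = \bigvee_i (A \cap B_i)$ for all $A, B_i \in \gpsi$.

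Third, I would upgrade frame distributivity to complete distributivity by exploiting the polarity structure, namely that closed elements $\Gamma x$ join-generate $\gpsi$ (Lemma~\ref{basic facts}). The goal is to show that each $\Gamma x$ is completely join-prime: if $\Gamma x \leq \bigvee_i A_i$ then $\Gamma x \leq A_i$ for some $i$. The frame law yields $\Gamma x = \bigvee_i (\Gamma x \cap A_i)$, hence $x \in (\bigcup_i (\Gamma x \cap A_i))''$. Assuming for contradiction $x \notin A_i$ for all $i$, I would choose for each $i$ a witness $y_i \in A_i'$ with $x \not\upv y_i$; the Galois-stability of the sections $R'_\leq[\_]v u$—applied at relevant $v, u$—should then permit the aggregation of these pointwise witnesses into a single $y \in \bigcap_i A_i'$ with $x \not\upv y$, contradicting $x \in (\bigcup_i A_i)''$. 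A join-generating family of completely join-prime elements yields complete distributivity by the standard characterization.

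The main obstacle is the aggregation step: frame distributivity by itself is insufficient for complete distributivity, and it is crucial that the hypothesis not merely encode binary distributivity but also supply the structural glue—via Galois-stability of section sets—that permits pointwise separations $y_i$ to be merged into a uniform separation $y$. The fine combinatorics of this aggregation, unpacked against the definition of the Galois dual of the upper-bound relation $R_\leq$, is where the delicate work lies and ultimately reflects the distributive-like behavior that the frame condition imposes on $\mathfrak{F}$.
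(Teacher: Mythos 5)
Your first two steps reproduce the paper's proof exactly: you compute that the section $R_\leq xz=\Gamma x\cap\Gamma z$ is the intersection of two closed elements, observe that the image operator generated by $R_\leq$, restricted to stable (indeed, to any increasing) sets, is just $\cap$, and then invoke the section-stability hypothesis together with Theorem~3.12 of \cite{duality2} to conclude that intersection distributes over arbitrary joins of stable sets in each argument place. That is the entirety of the paper's argument; the paper takes this distribution law as the content of ``completely distributive'' and stops there.

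The third step you add is where the proposal breaks. You propose to upgrade the distribution law to complete distributivity by showing that every closed element $\Gamma x$ is completely join-prime, and you concede that the ``aggregation'' of the pointwise witnesses $y_i$ into a single $y$ is the delicate part. That aggregation cannot be carried out, because the claim it is meant to establish is false: closed elements need not be completely join-prime, even in frames satisfying the hypothesis. Take the canonical frame of the four-element Boolean algebra $\{0,a,b,1\}$; being the canonical frame of a distributive lattice, it satisfies the section-stability hypothesis (see Case~5 in the proof of Theorem~\ref{fig 1 theorem}). There $\gpsi$ is isomorphic to the algebra itself, the closed element $\Gamma x_1=X$ is the top, and $X=X_a\vee X_b$ while $X\not\subseteq X_a$ and $X\not\subseteq X_b$. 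So the route via join-primeness of the join-generators is a dead end, not merely delicate. Your underlying worry --- that ``binary meet distributes over arbitrary joins'' is formally weaker than complete distributivity in the textbook sense --- is a legitimate observation about the statement of the theorem, but if one wants the stronger conclusion it has to be extracted by other means (for instance by also exploiting meet-generation by open elements and a dual distribution law), not by the argument you sketch. As written, your proposal establishes exactly what the paper's proof establishes and nothing more, and the extra step you declare to be essential would fail.
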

\begin{proof}
Let $\alpha_R$ be the image operator generated by $R_\leq$, $\alpha_R(U,W)=\bigcup_{u\in U}^{w\in W}Ruw$. Notice that, for stable sets $A,C$ (more generally, for increasing sets), $\alpha_R(A,C)=A\cap C$. Hence $\overline{\alpha}_R(A,C)=\alpha_R(A,C)=A\cap C$, since Galois sets are closed under intersection. Given the section stability hypothesis for the Galois dual relation $R'_\leq$ of $R_\leq$,  \cite[Theorem~3.12]{duality2} applies, from which distribution of $\overline{\alpha}_R$ (i.e. of intersection) over arbitrary joins of stable sets is concluded.
\end{proof}

\subsection{Frames for Quasi-Complemented Lattices}
\label{section: frames for quasi comp lats}
Frames for quasi-complemented lattices, with at least a minimal quasi complement, are structures $\mathfrak{F}=(X,\upv,Y,S_\vee)$ with $\sigma(S_\vee)=(\partial;1)$, i.e. $S_\vee\subseteq Y\times X$. Note that in the case of a dual minimal quasi-complement we need to consider a binary relation $R_\wedge\subseteq X\times Y$ (of the dual sort type, comparing to $S_\vee$).

We list in Table \ref{quasiframe axioms table}, after \cite[Table~2]{duality2}, the minimal axiomatization we assume, which will be strengthened in the sequel imposing, for duality purposes, a spectral topology on each of $X,Y$. The only difference with the frame axioms of \cite{duality2} is that we add axiom (F0) which ensures that the empty set is (co)stable and, more precisely, ${}\rperp Y=\emptyset$ and similarly $X\rperp=\emptyset$.

Note, in particular, that axioms (F1) and (F2) imply that there is a (sorted) function $\widehat{\nu}_S:X\lra Y$ on the points of the frame such that, dropping the subscript, for simplicity, $\widehat{\nu}(x)=y$ iff $S_\vee x=\Gamma y$. Therefore
\begin{equation}\label{alternative-def-of-S}
yS_\vee x\;\mbox{ iff }\; \widehat{\nu}(x)\leq y
\end{equation}

\begin{table}[t]
\caption{Axioms for sorted frames $\mathfrak{F}=(X,\upv,Y,S_\vee)$}
\label{quasiframe axioms table}
\begin{enumerate}
  \item[(F0)] The complement $I$ of the Galois relation $\upv$ of the frame is quasi-serial, i.e. the conditions
  $\forall x\in X\exists y\in Y\; xIy$ and $\forall y\in Y\exists x\in X\; xIy$ hold.
  \item[(F1)] The frame is separated.
  \item[(F2)] For any $x\in X$, the section $S_\vee x$ of the frame relation $S_\vee$ is a closed element of $\gphi$.
  \item[(F3)] For any $y\in Y$, the section $yS_\vee$ of the frame relation $S_\vee$ is decreasing (a downset).
  \item[(F4)] Both sections of the Galois dual relation $S'_\vee\subseteq X\times X$ of $S_\vee$ are Galois sets.
\end{enumerate}
\hrulefill
\end{table}

\begin{defn}\label{image-op-and-bot}\rm
The {\em incompatibility relation} ${\perp}\subseteq X\times X$ is defined as the Galois dual relation $S'_\vee$ of the frame relation $S_\vee$. More explicitly, for all $x,z\in X$,
\begin{equation}\label{incompatibility-def}
x\perp z\mbox{ iff }x S'_\vee z\mbox{ iff }\forall y\in Y(yS_\vee z\lra x\upv y)
\end{equation}
Furthermore, for any set $U\subseteq X$, define a {\em star set-operator} by
\begin{equation}\label{star-op}
  U^*=\{x\in X\midsp x\perp U\}=\{x\in X\midsp\forall u\in U\;x\perp u\}
\end{equation}
\end{defn}

Given a sorted frame $\mathfrak{F}=(X,\upv,Y,S_\vee)$ as above, the language of quasi-complemented lattices can be interpreted in the frame. If $v:P\lra\gpsi$ is an assignment of stable sets to propositional variables, an interpretation $\val{\;}$ and a co-interpretation $\yvval{\;}$ are functions required to satisfy the following mutual recursion clauses, as  well as the identities $\yvval{\varphi}=\val{\varphi}\rperp$ and then also $\val{\varphi}={}\rperp\yvval{\varphi}$, for any sentence $\varphi$.

\begin{tabbing}
\hskip4mm\= $\val{p}$\hskip8mm\= = \hskip2mm\= $v(p)$\hskip4cm\= $\yvval{p}$\hskip8mm\= = \hskip2mm\=$v(p)'=v(p)\rperp$\\
\>$\val{\top}$\>=\> $X$\> $\yvval{\perp}$\>=\> $Y$\\
\>$\val{\varphi\wedge\psi}$\>=\> $\val{\varphi}\cap\val{\psi}$\> $\yvval{\varphi\vee\psi}$\>=\> $\yvval{\varphi}\cap\yvval{\psi}$\\
\> $\val{\neg\varphi}$\>=\> $\{x\in X\midsp x\perp\val{\varphi}\}=\val{\varphi}^*$
\end{tabbing}
Satisfaction $\forces$ and co-satisfaction (refutation) $\dforces$ relations can be then defined as usual, by $x\forces\varphi$ iff $x\in\val{\varphi}$ and $y\dforces\varphi$ iff $y\in\yvval{\varphi}$ iff $\forall x(x\forces\varphi\lra x\upv y)$, where $x\in X$ and $y\in Y$. Soundness and completeness issues will be discussed after establishing the basic properties of frames for quasi-complemented lattices.

In Corollary~\ref{star-codistrib-coro} we conclude the proof that the star-operator of equation \eqref{star-op} co-distributes over arbitrary joins of stable sets, i.e. that $\left(\bigvee_{j\in J}A_j\right)^*=\bigcap_{j\in J}A^*_j$, and this implies that $\gpsi$ is a quasi-complemented lattice, hence the minimal logic $\mathbf{M}$ is sound in the class of frames we consider. To prove co-distribution of the star operator over joins we proceed by first providing an alternative construction of the star-operator, using the image operator associated to the frame relation $S_\vee$.

Let $\eta_S:\powerset(X)\lra\powerset(Y)$ be the sorted image operator generated by $S_\vee$, defined on $U\subseteq X$  by
\begin{equation}\label{sorted image op}
\eta_S(U)=\{y\in Y\midsp\exists x\in X(yS_\vee x\mbox{ and }x\in U)\}=\bigcup_{x\in U}S_\vee x,
\end{equation}
and $\overline{\eta}_S:\gpsi\lra\gphi$ be the closure of its restriction to Galois sets.
 By axiom (F2), $S_\vee x$ is a Galois set.

\begin{lemma}\label{s-nu-gamma}\rm
For any point $x\in X$, $S_\vee x=\overline{\eta}_S(\Gamma x)=\eta_S(\Gamma x)=\Gamma(\widehat{\nu}(x))$.
\end{lemma}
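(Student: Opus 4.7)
The plan is to verify the three equalities $\overline{\eta}_S(\Gamma x) = \eta_S(\Gamma x)$, $\eta_S(\Gamma x) = S_\vee x$, and $S_\vee x = \Gamma(\widehat{\nu}(x))$ in turn, working from the rightmost to the leftmost.

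The rightmost identity is immediate from equation \eqref{alternative-def-of-S}: $y \in S_\vee x$ means $yS_\vee x$, which by \eqref{alternative-def-of-S} is equivalent to $\widehat{\nu}(x) \leq y$, that is, $y \in \Gamma(\widehat{\nu}(x))$. The function $\widehat{\nu}$ is well defined on account of axioms (F1) and (F2), as noted in the paragraph preceding the lemma.

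For the middle identity, I would start from the defining expression $\eta_S(\Gamma x) = \bigcup_{z \in \Gamma x} S_\vee z$ given in \eqref{sorted image op}. The inclusion $S_\vee x \subseteq \eta_S(\Gamma x)$ is trivial, since $x \in \Gamma x$ makes $S_\vee x$ one of the summands of the union. For the reverse inclusion I invoke axiom (F3): whenever $x \leq z$ and $y \in S_\vee z$, the downset property of the section $yS_\vee$ forces $yS_\vee x$, i.e.\ $y \in S_\vee x$. Hence $S_\vee z \subseteq S_\vee x$ for every $z \in \Gamma x$, and the union collapses to $S_\vee x$.

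The leftmost identity $\overline{\eta}_S(\Gamma x) = \eta_S(\Gamma x)$ then follows at once from the previous step, because $\eta_S(\Gamma x) = S_\vee x$ is already a closed (hence Galois co-stable) element of $\gphi$ by (F2), and the Galois closure operator $\overline{\eta}_S$ acts as the identity on anything that is already co-stable. There is no serious obstacle in any of these steps; the only potential pitfall is reading the direction of (F3) correctly, namely that a downset property at the $X$-argument yields $S_\vee z \subseteq S_\vee x$ for $x \leq z$, rather than the reverse.
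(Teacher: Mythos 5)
Your proposal is correct and follows essentially the same route as the paper: collapse the union $\eta_S(\Gamma x)=\bigcup_{x\leq z}S_\vee z$ to $S_\vee x$ via the downset property of (F3), note that $S_\vee x$ is already a closed element of $\gphi$ by (F2) so the closure $\overline{\eta}_S$ does nothing, and read off $S_\vee x=\Gamma(\widehat{\nu}(x))$ from the definition of $\widehat{\nu}$. You also correctly identified the one delicate point, namely the direction in which (F3) yields $S_\vee z\subseteq S_\vee x$ for $x\leq z$.
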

\begin{proof}
By definition, $\eta_S(\Gamma x)=\bigcup_{z\in\Gamma x}S_\vee z=\bigcup_{x\leq z}S_\vee z$. It follows from axiom (F3) that for $x\leq z$ we have $S_\vee z\subseteq S_\vee x$, hence $\bigcup_{x\leq z}S_\vee z=S_\vee x$, a closed element by axiom (F2), hence $S_\vee x=\eta_S(\Gamma x)=\overline{\eta}_S(\Gamma x)$. Observing that we also have $\widehat{\nu}(x)=y$ iff $S_\vee x=\Gamma y$, the proof is complete.
\end{proof}

 It follows from Lemma~\ref{s-nu-gamma} that for a stable set $A\in\gpsi$,
\begin{equation}\label{closure of sorted def}
\overline{\eta}_S(A)=\left(\bigcup_{x\in A}S_\vee x\right)''=\bigvee_{x\in A}S_\vee x=\bigvee_{x\in A}\overline{\eta}_S(\Gamma x)\in\gphi.
\end{equation}

\begin{prop}\label{eta-S-distrib-property}\rm
$\overline{\eta}_S$ distributes over arbitrary joins in the lattice $\gpsi$, returning a join in $\gphi$. In symbols, $\overline{\eta}_S\left(\bigvee_{j\in J}A_j\right)=\bigvee_{j\in J}\overline{\eta}_S(A_j)$.
\end{prop}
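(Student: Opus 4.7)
The plan is to prove $\overline{\eta}_S(B) = \bigvee_j \overline{\eta}_S(A_j)$, where $B = \bigvee_{j\in J} A_j$. The inequality $\bigvee_j \overline{\eta}_S(A_j) \leq \overline{\eta}_S(B)$ is immediate from monotonicity of $\overline{\eta}_S$, so the real content is the reverse inequality. Set $U = \bigcup_{j\in J} A_j$, so that $B = U''$, and let $W = \bigvee_j \overline{\eta}_S(A_j)$. Two applications of equation \eqref{closure of sorted def}, one to each $A_j$ and one to $B$, give $W = \bigvee_{x \in U}\Gamma\widehat{\nu}(x)$ and $\overline{\eta}_S(B) = \bigvee_{z \in B}\Gamma\widehat{\nu}(z)$. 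Since $W\in\gphi$ is an increasing Galois set, it suffices to show that $\widehat{\nu}(z) \in W$ for every $z \in B$.

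Next I would transport this condition across the Galois antiisomorphism $\rperp : \gpsi \iso \gphi^\partial$: the membership $\widehat{\nu}(z) \in W$ is equivalent to $W' \subseteq \{\widehat{\nu}(z)\}'$. A short unfolding using Lemma \ref{basic facts}(2), equation \eqref{alternative-def-of-S} and the fact that $\rperp$ sends joins in $\gphi$ to intersections in $\gpsi$ yields $\{\widehat{\nu}(x)\}' = S'_\vee x$ for each $x \in X$, hence $W' = \bigcap_{x\in U} S'_\vee x$. The goal thus reduces to the following implication: for every $w \in X$, if $w S'_\vee x$ holds for all $x \in U$, then $w S'_\vee z$.

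This implication is the only non-routine step, and axiom (F4) is exactly what makes it go through. From $wS'_\vee x$ for all $x \in U$ I obtain $U \subseteq wS'_\vee$, where $wS'_\vee = \{v\in X : w S'_\vee v\}$ is the section of the Galois dual relation at its first argument; by axiom (F4) this section is a Galois set. Lemma \ref{basic facts}(6) then lifts the containment from generators to the Galois closure: $B = U'' \subseteq wS'_\vee$. Since $z\in B$, we conclude $w S'_\vee z$, as required. The main obstacle is precisely this lifting from generators of $U$ to its closure $B = U''$, and it is made possible exclusively by the stability of the first-argument sections of $S'_\vee$ guaranteed by axiom (F4); the rest of the argument is a direct unwinding of the Galois apparatus.
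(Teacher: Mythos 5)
Your proof is correct. The paper itself disposes of this proposition in one line, by observing that it is an instance of Theorem~3.12 of \cite{duality2} (the general statement that section-stability of the Galois dual relation $R'$ forces the closure $\overline{\alpha}_R$ of the image operator to be completely join-preserving), with axiom (F4) supplying the hypothesis. What you have done instead is unfold a self-contained proof of exactly that instance: reduce to showing $\widehat{\nu}(z)\in W$ for $z\in B=U''$, transport across the Galois connection to the implication $\bigcap_{x\in U}S'_\vee x\subseteq S'_\vee z$, and then use (F4) together with Lemma~\ref{basic facts}(6) to lift $U\subseteq wS'_\vee$ to $U''\subseteq wS'_\vee$. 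Every step checks out: the identification $W'=\bigcap_{x\in U}S'_\vee x$ is correct because $(\;)\rperp$ turns joins in $\gphi$ into intersections in $\gpsi$ and $(\Gamma\widehat{\nu}(x))'=\{\widehat{\nu}(x)\}'=S'_\vee x$ by Lemma~\ref{basic facts}(2), and you correctly isolate (F4) as the one non-routine ingredient --- the same condition the paper invokes when citing the external theorem. The only cosmetic remark is that the section $wS'_\vee=\{v\in X\midsp wS'_\vee v\}$ is, in the paper's terminology of Definition~\ref{sections defn}, the section at the (unique) argument place obtained by fixing the output coordinate $w$; calling it the ``first-argument section'' is harmless but slightly at odds with the paper's indexing. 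Your version has the advantage of making the paper self-contained at this point; the paper's has the advantage of making clear that nothing special about $S_\vee$ being binary or of sort $(\partial;1)$ is used.
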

\begin{proof}
In \cite[Theorem~3.12]{duality2} we proved that if all sections of the Galois dual relation $R'$ of a frame relation $R$ are Galois sets, then the closure $\overline{\alpha}_R$ of the restriction to Galois sets of the sorted image operator $\alpha_R$ generated by the frame relation $R$ distributes over arbitrary joins in each argument place. The current proposition is just an instance of Theorem~3.12 of \cite{duality2}, given that by the frame axioms of Table~\ref{quasiframe axioms table}, both sections of $S_\vee$ are stable, hence $\overline{\eta}_S$ has the claimed distribution property.
\end{proof}

Define now
\begin{equation}
\overline{\eta}_\vee(A)=(\overline{\eta}_S(A))'=\bigcap_{z\in A}S'_\vee z \label{def eta nu}
\end{equation}
so that $\overline{\eta}_\vee$ is a single-sorted operation (on $\gpsi$) derived from $\overline{\eta}_S$ by composition with the Galois connection.

\begin{lemma}\label{ast-is-eta-nu}\rm
For any stable set $A\in\gpsi$, $A^*=\overline{\eta}_\vee(A)$.
\end{lemma}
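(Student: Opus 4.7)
The plan is to show that both sides, when unfolded, reduce to the same intersection $\bigcap_{z\in A}S'_\vee z$, and the main work is just tracking through the definitions of the star-operator, the Galois dual relation, the sorted image operator $\overline{\eta}_S$, and the Galois connection.

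First I would unfold $A^*$. By its definition in equation \eqref{star-op} and the definition of the incompatibility relation ${\perp}=S'_\vee$ (equation \eqref{incompatibility-def}), we have
\[
A^*=\{x\in X\midsp\forall z\in A\;x\perp z\}=\bigcap_{z\in A}S'_\vee z,
\]
where $S'_\vee z$ denotes the $z$-section of the Galois dual relation $S'_\vee$ of $S_\vee$.

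Next I would unfold the right-hand side. By definition, $\overline{\eta}_\vee(A)=(\overline{\eta}_S(A))'$. Equation \eqref{closure of sorted def}, which is a direct consequence of Lemma~\ref{s-nu-gamma}, expresses $\overline{\eta}_S(A)$ as a join of closed co-stable elements:
\[
\overline{\eta}_S(A)=\bigvee_{z\in A}S_\vee z\in\gphi.
\]
Applying the Galois map $(\;)'={}\rperp(\;)$ to this join converts it into an intersection (this is the standard fact that the Galois connection takes arbitrary joins to arbitrary meets), so
\[
(\overline{\eta}_S(A))'=\left(\bigvee_{z\in A}S_\vee z\right)'=\bigcap_{z\in A}(S_\vee z)'.
\]

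Finally I would reconcile the two computations by appealing to Definition~\ref{Galois dual relations}, which stipulates precisely that $R'\vec{v}=(R\vec{v})'$; specialized to the unary case of the sections of $S_\vee$ this gives $(S_\vee z)'=S'_\vee z$. Therefore
\[
\overline{\eta}_\vee(A)=\bigcap_{z\in A}(S_\vee z)'=\bigcap_{z\in A}S'_\vee z=A^*,
\]
completing the proof. There is no real obstacle here; the statement is a definitional identity once one recognizes that composing the image operator $\overline{\eta}_S$ with the Galois connection reproduces the star-operator, and the content of the lemma is exactly that alternative construction, preparing the ground for deriving co-distribution of $(\;)^*$ over joins from Proposition~\ref{eta-S-distrib-property} in Corollary~\ref{star-codistrib-coro}.
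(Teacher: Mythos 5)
Your proof is correct and follows essentially the same route as the paper: both arguments reduce each side to the intersection $\bigcap_{z\in A}S'_\vee z$ by unfolding the definitions of $(\;)^*$, $\perp=S'_\vee$, and $\overline{\eta}_\vee$. The only difference is that you explicitly re-derive the identity $(\overline{\eta}_S(A))'=\bigcap_{z\in A}S'_\vee z$ from the join-to-meet property of the Galois connection and Definition~\ref{Galois dual relations}, whereas the paper takes this as already given in equation \eqref{def eta nu}.
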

\begin{proof}
Given that $\overline{\eta}_\vee(A)=\bigcap_{z\in A}S'_\vee z$ and using the definition of the incompatibility relation $\perp$, $x\in\overline{\eta}_\vee(A)$ iff for all $z\in A$, $x\perp z$ iff $x\perp A$ iff $x\in A^*$.
\end{proof}
The following is then immediate, given the definition of $\overline{\eta}_\vee$ together with the fact that the Galois connection induced by $\upv$ restricts to a dual equivalence of the lattices of stable and co-stable sets.

\begin{coro}\label{star-codistrib-coro}\rm
The star operation co-distributes over arbitrary joins of stable sets. In symbols $\left(\bigvee_{j\in J}A_j\right)^*=\bigcap_{j\in J}A^*_j$.\telos
\end{coro}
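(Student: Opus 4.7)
The plan is to chain together the three preceding results—Proposition~\ref{eta-S-distrib-property}, equation~\eqref{def eta nu}, and Lemma~\ref{ast-is-eta-nu}—using that the Galois connection $(\;)\rperp:\gpsi\iso\gphi^\partial:{}\rperp(\;)$ is a dual isomorphism, hence sends joins in $\gphi$ to meets (intersections) in $\gpsi$.

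First I would rewrite the left-hand side using Lemma~\ref{ast-is-eta-nu}: $\left(\bigvee_{j\in J}A_j\right)^{\!*}=\overline{\eta}_\vee\!\left(\bigvee_{j\in J}A_j\right)$. Unfolding the definition~\eqref{def eta nu} gives $\overline{\eta}_\vee\!\left(\bigvee_{j\in J}A_j\right)=\bigl(\overline{\eta}_S\!\left(\bigvee_{j\in J}A_j\right)\bigr)'$. At this point I would invoke Proposition~\ref{eta-S-distrib-property}, which is the workhorse: since $\overline{\eta}_S$ distributes over arbitrary joins of stable sets (returning a join in $\gphi$), the inner term becomes $\bigvee_{j\in J}\overline{\eta}_S(A_j)$, a join computed in $\gphi$.

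Next I would apply the $(\;)'$ map, using that the Galois connection restricts to the dual isomorphism $\gpsi\iso\gphi^\partial$ noted in Section~\ref{section: frames review}: a join in $\gphi$ is sent to an intersection of primed elements in $\gpsi$. Thus $\bigl(\bigvee_{j\in J}\overline{\eta}_S(A_j)\bigr)'=\bigcap_{j\in J}\bigl(\overline{\eta}_S(A_j)\bigr)'=\bigcap_{j\in J}\overline{\eta}_\vee(A_j)$. A final appeal to Lemma~\ref{ast-is-eta-nu} rewrites each $\overline{\eta}_\vee(A_j)$ as $A_j^*$, yielding $\bigcap_{j\in J}A_j^*$.

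There is no real obstacle here—the corollary is essentially a bookkeeping exercise, since all the genuine content (the section-stability hypothesis (F4) feeding into Theorem~\ref{upper bound rel prop}'s distribution machinery of \cite[Theorem~3.12]{duality2}) has already been absorbed into Proposition~\ref{eta-S-distrib-property}. The only mildly delicate point is keeping track of sorts: the join $\bigvee_{j\in J}\overline{\eta}_S(A_j)$ is formed in $\gphi$ (not in $\powerset(Y)$), which is precisely why applying $(\;)'={}\rperp(\;)$ converts it into an intersection rather than requiring an extra closure step. Accordingly the proof can be compressed into a two-line equational chain justified by the three earlier results.
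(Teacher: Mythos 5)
Your proposal is correct and matches the paper's own (very terse) justification, which declares the corollary ``immediate'' from the definition of $\overline{\eta}_\vee$ in equation~\eqref{def eta nu}, Lemma~\ref{ast-is-eta-nu}, Proposition~\ref{eta-S-distrib-property}, and the fact that the Galois connection restricts to a dual equivalence $\gpsi\iso\gphi^\partial$ turning joins into intersections. You have simply written out the same equational chain explicitly, with the sort-tracking remark being a helpful but unnecessary elaboration.
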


Corollary~\ref{star-codistrib-coro} establishes that if $\mathfrak{F}=(X,\upv,Y,S_\vee)$ is a sorted frame validating the frame axioms in Table~\ref{quasiframe axioms table}, then its full complex algebra of stable sets $\mathfrak{F}^+=(\gpsi,\subseteq,\bigcap,\bigvee,\emptyset,X,(\;)^*)$ is in the variety $\mathbb{M}$ of quasi-complemented lattices with a minimal quasi-complementation operator.

\begin{rem}[Frames for Dual Minimal Quasi-complements]\rm 
\label{unnecessity rem}
The frames to be considered in this case are structures $\mathfrak{F}=(X,\upv,Y,R_\wedge)$ satisfying axioms (F0)--(F4) of Table \ref{quasiframe axioms table}, with $R_\wedge\subseteq X\times Y$ in place of $S_\vee\subseteq Y\times X$. In other words, we assume in the frame axiomatization for this case that for all $y$, the set $R_\wedge y$ is a closed element in $\gpsi$ (axiom (F2)) and designate its generating point by $\widetilde{\nu}(y)\in X$, for each $y\in Y$. By axiom (F3), for each $x\in X$, the section $xR_\wedge$ is a downset. Following our notational convention, we designate its Galois dual relation by $R'_\wedge\subseteq Y\times Y$. Since $R_\wedge y=\Gamma\widetilde{\nu}(y)\in\gpsi$, we obtain that $R'_\wedge y=\{\widetilde{\nu}(y)\}\rperp\in\gphi$. By axiom (F4), for each $v\in Y$ the section $vR'_\wedge$ is a Galois (co-stable) set in $\gphi$.

Let $\eta_R:\powerset(Y)\lra\powerset(X)$ be the sorted image operator generated by the (sorted) relation $R_\wedge$
\begin{equation}\label{eta-R-defn}
\eta_R(V)=\{x\in X\midsp\exists y\in Y(xR_\wedge y\wedge y\in V)\}=\bigcup_{y\in V}R_\wedge y
\end{equation}
and let $\overline{\eta}_R:\gphi\lra\gpsi$ be the closure of its restriction to Galois (co-stable) sets, defined by $\overline{\eta}_R(B)=\bigvee_{y\in B}R_\wedge y$, and set $\overline{\eta}_\wedge(A)=\overline{\eta}_R(A')=\bigvee_{A\upv y}R_\wedge y$.

By the same arguments as for $\overline{\eta}_S$ (see Lemma~\ref{s-nu-gamma} and Proposition~\ref{eta-S-distrib-property}) we obtain that for any $y\in Y$
\begin{equation}\label{eta-R-Gamma-y}
\overline{\eta}_R(\Gamma y)=R_\wedge y=\eta_R(\Gamma y)=\Gamma\widetilde{\nu}(y)\in\gpsi
\end{equation}
and that for $B_i\in\gphi$, for each $i\in I$,
\begin{equation}\label{eta-R-dist}
\overline{\eta}_R(\bigvee_{i\in I}B_i)=\bigvee_{i\in I}\overline{\eta}_R(B_i)\in\gpsi. 
\end{equation}
Hence, 
\begin{equation}\label{eta-wedge-codist}
\overline{\eta}_\wedge(\bigcap_{i\in I}A_i)=\overline{\eta}_R((\bigcap_{i\in I}A_i)')=
\overline{\eta}_R(\bigvee_{i\in I}A'_i)=\bigvee_{i\in I}\overline{\eta}_R(A'_i)=\bigvee_{i\in I}\overline{\eta}_\wedge(A_i),
\end{equation}
i.e. $\overline{\eta}_\wedge$ co-distributes over arbitrary meets of stable sets (turning them to joins), so that it is a dual minimal operator in the lattice $\gpsi$ of Galois stable sets.
\end{rem}

Next, we turn to establishing frame conditions under which $\mathfrak{F}^+$ lies within one of the subvarieties $\mathbb{G, INV, DM, O}$ or $\mathbb{BA}$ of Figure~\ref{quasi-complements figure}. We leave the (similar) cases for $\mathbb{M}^\partial,\mathbb{G}^\partial$ to the interested reader. We conclude with the necessary correspondence conditions in Corollary~\ref{frame conditions for complex algebra coro}, which follows from Lemmas~\ref{galois frame lemma}, \ref{ex falso frame lemma} and \ref{involution frame lemma}. The proofs of the Lemmas use the fact that $\overline{\eta}_S$ is residuated with a map $\overline{\zeta}_S$ and providing a suitable definition of the residual in Proposition~\ref{residual-of-eta-s} allows us to conclude in Corollary~\ref{frame conditions for complex algebra coro} the intended correspondence results. Note that having $\overline{\eta}_S\dashv\overline{\zeta}_S$ and composing with the Galois connection, as shown in Figure~\ref{eta-zeta-figure}, we obtain a Galois connected pair of maps $\overline{\eta}_\vee\dashv\overline{\zeta}_\vee$.

\begin{figure}[t]
\caption{The maps $\overline{\eta}_S,\overline{\eta}_\nu$ and their respective adjoints $\overline{\zeta}_S,\overline{\zeta}_\vee$}
\label{eta-zeta-figure}
\[
\xymatrix{
\powerset(X)\ar@<0.5ex>[rr]^{\eta_S} && \powerset(Y)\ar@<0.5ex>[ll]^{\zeta_S} & \eta_S\dashv\zeta_S 
\\
\mathcal{G}(X)\ar@<0.5ex>[rr]^{\overline{\eta}_S}\ar@<0.5ex>[d]^{\overline{\eta}_\vee=(\;)^*}
&& \mathcal{G}(Y)\ar@<0.5ex>[dll]\ar@<0.5ex>[ll]^{\overline{\zeta}_S} & \overline{\eta}_S\dashv\overline{\zeta}_S
\\
\mathcal{G}(X)^{\mathrm{op}}\ar@<0.5ex>[u]^{(\;)^\smalltriangleup=\overline{\zeta}_\vee} \ar@<0.5ex>[urr]_{(\;)'}
&&& A\subseteq C^\smalltriangleup\mbox{ iff }C\subseteq A^*
}
\]
\hrulefill
\end{figure}

\begin{prop}\label{residual-of-eta-s}\rm
The map $\overline{\zeta}_S:\gphi\lra\gpsi$ defined on a co-stable set $B\in\gphi$ by $\overline{\zeta}_S(B)=\{x\in X\midsp\overline{\eta}_S(\Gamma x)\subseteq B\}$ is the right residual of $\overline{\eta}_S$.
\end{prop}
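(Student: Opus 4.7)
The plan is to prove the residuation equivalence $\overline{\eta}_S(A)\subseteq B$ iff $A\subseteq\overline{\zeta}_S(B)$, for $A\in\gpsi$ and $B\in\gphi$, after first verifying that the formula $\overline{\zeta}_S(B)=\{x\in X\midsp\overline{\eta}_S(\Gamma x)\subseteq B\}$ actually lands in $\gpsi$, so that $\overline{\zeta}_S$ is well-typed as $\gphi\lra\gpsi$. I expect the well-definedness step to be the main obstacle, since residuation itself will drop out quickly once that is settled; the essential structural input will be frame axiom (F4).

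For well-definedness, I would first use Lemma \ref{s-nu-gamma} to rewrite the defining condition as $S_\vee x\subseteq B$, and then transport this inclusion across the Galois connection $(\;)\rperp:\powerset(X)\leftrightarrows\powerset(Y)^\partial:\lperp(\;)$. This yields $\lperp B\subseteq\lperp(S_\vee x)$, and unfolding $\lperp(S_\vee x)$ using Definition \ref{Galois dual relations} shows this is exactly the condition that $zS'_\vee x$ for every $z\in\lperp B$. Consequently
\[
\overline{\zeta}_S(B)\;=\;\bigcap_{z\in{\lperp}B}\,zS'_\vee,
\]
and axiom (F4) then asserts that each section $zS'_\vee$ is a Galois stable set. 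Since $\gpsi$ is closed under arbitrary intersections, $\overline{\zeta}_S(B)\in\gpsi$ as required.

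The residuation equivalence will then be handled by a direct two-sided argument. For the forward direction, assume $\overline{\eta}_S(A)\subseteq B$ and take $x\in A$; by Lemma \ref{basic facts}(3), $\Gamma x\subseteq A$, and monotonicity of $\overline{\eta}_S$ gives $\overline{\eta}_S(\Gamma x)\subseteq\overline{\eta}_S(A)\subseteq B$, hence $x\in\overline{\zeta}_S(B)$. For the converse, assume $A\subseteq\overline{\zeta}_S(B)$, so that $S_\vee x\subseteq B$ for every $x\in A$; taking the union yields $\bigcup_{x\in A}S_\vee x\subseteq B$, and since $B$ is Galois (hence equal to its own Galois closure) while equation (\ref{closure of sorted def}) identifies $\overline{\eta}_S(A)$ with precisely the closure of this union, we conclude $\overline{\eta}_S(A)\subseteq B$.

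In short, the work is concentrated in the reformulation of $\overline{\zeta}_S(B)$ as an intersection of sections of the Galois dual relation $S'_\vee$; once axiom (F4) is applied to secure stability of those sections, the residuation identity reduces to join-generation of stable sets by their closed elements together with Galois-closedness of $B$.
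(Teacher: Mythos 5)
Your proof is correct, but it takes a genuinely different route from the paper's. The paper gets existence of the residual for free from Proposition~\ref{eta-S-distrib-property} ($\overline{\eta}_S$ preserves arbitrary joins, hence is residuated, with $\overline{\zeta}_S(B)=\bigvee\{A\in\gpsi\midsp\overline{\eta}_S(A)\subseteq B\}$) and then identifies this abstract residual with the explicit formula by instantiating Theorem~3.14 and Lemma~3.15 of \cite{duality2}, which show the join is a set-theoretic union and reduce membership to the test on closed elements $\Gamma x$. You instead verify everything directly: your reformulation $\overline{\zeta}_S(B)=\bigcap_{z\in{\lperp}B}zS'_\vee$ (using Lemma~\ref{s-nu-gamma} to replace $\overline{\eta}_S(\Gamma x)$ by $S_\vee x$, the co-stability of $B$ to transport the inclusion across the Galois connection, and Definition~\ref{Galois dual relations} to recognize the sections) together with axiom (F4) and closure of $\gpsi$ under intersections gives well-definedness, and the two-sided residuation check using Lemma~\ref{basic facts}(3),(6) and equation~\eqref{closure of sorted def} is sound. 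What your argument buys is self-containedness and a transparent localization of where (F4) is actually used; what the paper's argument buys is brevity, the automatic fact that $\overline{\zeta}_S$ turns meets of $\gphi$ into meets of $\gpsi$, and uniformity with the general treatment of residuals of relationally generated operators in \cite{duality2}. One small caveat: your closing remark that the essential input is (F4) is accurate only for the well-definedness step; the residuation equivalence itself needs nothing beyond monotonicity of $\overline{\eta}_S$, Galois sets being increasing, and Galois-closedness of $B$.
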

\begin{proof}
Since $\overline{\eta}_S:\gpsi\lra\gphi$ distributes over arbitrary joins, by Proposition~\ref{eta-S-distrib-property}, it is residuated with a map $\overline{\zeta}_S:\gphi\lra\gpsi$, which maps meets of $\gphi$ to meets of $\gpsi$,
 defined on $B\in\gphi$  by
\begin{equation}\label{standard residual defn}
\overline{\zeta}_S(B)=\bigvee\{A\in\gpsi\midsp\overline{\eta}_S(A)\subseteq B\}.
\end{equation}
Instantiating \cite[Theorem~3.14]{duality2} to the present case, we obtain that the residual of $\overline{\eta}_S$ is the restriction to Galois sets of the residual $\zeta_S$ of the image operator $\eta_S$, explicitly defined by $\overline{\zeta}_S(B)=\bigcup\{A\in\gpsi\midsp\overline{\eta}_S(A)\subseteq B\}$, so that the join in the standard definition~\eqref{standard residual defn} of the residual $\overline{\zeta}_S$ is actually set-theoretic union. Thereafter, instantiating
 \cite[Lemma~3.15]{duality2} to the present case, we obtain that $\overline{\zeta}_S(B)$ is equivalently defined by equation \eqref{beta equiv 2 special},
\begin{equation}
\label{beta equiv 2 special}
\overline{\zeta}_S(B)=\{x\in X\midsp\overline{\eta}_S(\Gamma x)\subseteq B\}
\end{equation}
and this completes the proof.
\end{proof}

By duality of $\gpsi$ and $\gphi$, every $B\in\gphi$ is $B=C'$ for some $C\in\gpsi$. Hence we obtain that $\overline{\eta}_S(A)\subseteq C'$ iff $A\subseteq\overline{\zeta}_S(C')$. From this, setting
\begin{equation}
\overline{\zeta}_\vee(C)=\overline{\zeta}_S(C') \label{def zeta nu}
\end{equation}
we obtain the Galois connection condition $C\subseteq\overline{\eta}_\vee(A)$ iff $A\subseteq\overline{\zeta}_\vee(C)$. Recalling that  $A^*=\overline{\eta}_\vee(A)$, from Lemma~\ref{ast-is-eta-nu}, and setting
\begin{equation}\label{def triangleup}
A^{\smalltriangleup}=\overline{\zeta}_\vee(A)
\end{equation}
we can rewrite the Galois connection condition as 
\begin{equation}\label{eta-zeta galois}
A\subseteq C^{\smalltriangleup}\;\mbox{ iff }\; C\subseteq A^*.
\end{equation}

\begin{lemma}
\rm \label{galois frame lemma}
Let $\mathfrak{F}=(X,\upv,Y,S_\vee)$ be a frame subject to the axioms of Table \ref{quasiframe axioms table} and let $A\in\gpsi$ be any stable set. The following are equivalent
    \begin{enumerate}
      \item $\perp$ is symmetric
      \item $A\subseteq A^{**}$
      \item $A^*=A^{\smalltriangleup}$.
    \end{enumerate}

\end{lemma}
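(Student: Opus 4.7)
The plan is to first unfold $A^*$ and $A^\smalltriangleup$ into parallel point-level descriptions in terms of $\perp$, and then read off the three equivalences from these descriptions. Using (F1)--(F2) together with equation~\eqref{alternative-def-of-S}, we have $yS_\vee z$ iff $\widehat{\nu}(z)\leq y$; since $\upv$ is increasing in its second argument (Lemma~\ref{basic facts}(1)), the condition $x\perp z$ simplifies to $x\upv\widehat{\nu}(z)$. Therefore $A^{*}=\{x\in X\midsp\forall z\in A\;\; x\perp z\}$. For $A^\smalltriangleup$, unfolding the definition via~\eqref{def zeta nu}, Proposition~\ref{residual-of-eta-s} and Lemma~\ref{s-nu-gamma}, $x\in A^\smalltriangleup$ iff $\overline{\eta}_S(\Gamma x)=\Gamma\widehat{\nu}(x)\subseteq A'$ iff $\widehat{\nu}(x)\in A'$ iff $z\upv\widehat{\nu}(x)$ for every $z\in A$; equivalently, $A^\smalltriangleup=\{x\in X\midsp\forall z\in A\;\; z\perp x\}$.

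Given these twin descriptions, (1)$\Rightarrow$(3) is immediate: symmetry of $\perp$ makes the two set-builders coincide on every stable $A$. For (3)$\Rightarrow$(2) I would invoke the Galois connection~\eqref{eta-zeta galois}, which always yields the unit $A\subseteq A^{*\smalltriangleup}$ (take $C=A^*$); under hypothesis (3) this becomes $A\subseteq A^{**}$.

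For the remaining implication (2)$\Rightarrow$(1), I would specialize to principal stable sets $A=\Gamma x$. Using that $\perp$ is monotone in each coordinate---a direct consequence of the characterization $S_\vee x=\Gamma\widehat{\nu}(x)$ together with (F3) and the monotonicity of $\upv$---the universal quantifier over $\Gamma x$ in the description of $(\Gamma x)^{*}$ collapses, giving $(\Gamma x)^{*}=\{v\in X\midsp v\perp x\}$. Applying the description a second time, the membership $x\in(\Gamma x)^{**}$ granted by hypothesis (2) unfolds to the statement that $v\perp x$ implies $x\perp v$ for every $v\in X$. Since $x$ is arbitrary, renaming variables yields the converse implication and hence symmetry of $\perp$, closing the cycle (1)$\Rightarrow$(3)$\Rightarrow$(2)$\Rightarrow$(1).

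The only subtle bookkeeping step is tracking the directions of monotonicity: one has to verify that $x\perp z$ is monotone in $z$ along $\preceq$ (so that the quantifier over $\Gamma z$ collapses to $z$) and likewise monotone in $x$ (so that the analogous collapse occurs on the $(-)^\smalltriangleup$ side). These are routine but essential; everything else reduces to two applications of the point-level formulas for $A^{*}$ and $A^\smalltriangleup$.
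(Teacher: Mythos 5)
Your proof is correct, but it is organized differently from the paper's. The paper proves the cycle (1)$\Rightarrow$(2), (2)$\Leftrightarrow$(3), (2)$\Rightarrow$(1), and handles (2)$\Leftrightarrow$(3) abstractly: hypothesis (2) makes $(\;)^*$ a Galois connection with itself by Lemma~\ref{neg basic facts lemma}(1), and since $(\;)^*$ is already Galois-connected to $(\;)^\smalltriangleup$ by construction, uniqueness of adjoints forces $(\;)^*=(\;)^\smalltriangleup$. Your route instead rests on the pointwise computation $A^\smalltriangleup=\{x\in X\midsp\forall z\in A\;\; z\perp x\}$, which does not appear in the paper and which makes (1)$\Rightarrow$(3) a one-line observation; the computation itself is sound (it uses $\overline{\eta}_S(\Gamma x)=\Gamma\widehat{\nu}(x)$ from Lemma~\ref{s-nu-gamma}, the fact that the co-stable set $A'$ is an upset, and the reduction of $z\perp x$ to $z\upv\widehat{\nu}(x)$). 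Your (3)$\Rightarrow$(2) via the unit of the adjunction~\eqref{eta-zeta galois} is essentially the paper's argument in the same direction, and your (2)$\Rightarrow$(1) through principal stable sets $\Gamma x$ mirrors the paper's, with the monotonicity of $\perp$ in each coordinate---needed to collapse the quantifier over $\Gamma x$---justified by the stability of the sections of $S'_\vee$ (axiom (F4) plus Lemma~\ref{basic facts}(3)), exactly as in the paper. What your version buys is concreteness: the equality $A^*=A^\smalltriangleup$ becomes a visible symmetry between two set-builders rather than a consequence of adjoint uniqueness; what it costs is the extra (routine but necessary) verification of the pointwise formula for $(\;)^\smalltriangleup$, and you should note that in passing from (3) to (2) you apply hypothesis (3) to the stable set $A^*$, so (3) must be read, as the paper also reads it, as holding for all $A\in\gpsi$.
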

\begin{proof}
For 
(1)$\Ra$(2),
suppose, for a contradiction, that $x\in A$, but $x\not\in A^{**}$. Let  $z\in A^*=\{z\midsp\forall u(u\in A\lra z\perp u)\}$ such that $x\perp z$ fails. Since $x\in A$,  $z\perp x$ holds. By  symmetry of $\perp$ it follows $x\perp z$, contradiction. Hence $A\subseteq A^{**}$, for any $A\in\gpsi$.

For (2)$\Ra$(3),
from the hypothesis $A\subseteq A^{**}$ and Lemma~\ref{neg basic facts lemma}~(1) it is concluded that $(\;)^*$ forms a Galois connection with itself. By Lemma~\ref{ast-is-eta-nu}, $A^*=\overline{\eta}_\vee(A)$, where by definition $\overline{\eta}_\vee(A)=(\overline{\eta}_S(A))'$. In Proposition~\ref{residual-of-eta-s} we defined a map $\overline{\zeta}_S$, proven to be the right residual of $\overline{\eta}_S$. We then defined $\overline{\zeta}_\vee(A)=\overline{\zeta}_S(A')$, hence obtaining a Galois connection with the two maps $\overline{\eta}_\vee$ and $\overline{\zeta}_\vee$. Recall that we simplified notation setting $A^\smalltriangleup=\overline{\zeta}_\vee(A)$. But now $(\;)^*$ forms a Galois connection both with itself and with $(\;)^\smalltriangleup$, hence by uniqueness of adjoints $(\;)^\smalltriangleup=(\;)^*$ follows.

For (3)$\Ra$(2),
by definition $(\;)^{\smalltriangleup}$ is Galois connected with $(\;)^*$ and hence if the two are equal, then the result follows by using Lemma \ref{neg basic facts lemma}.

For (2)$\Ra$(1), recall that $\perp=S'_\vee$, both sections of which are stable sets, which are increasing sets by Lemma \ref{basic facts}, and that by Lemma  \ref{neg basic facts lemma} the hypothesis means that $(\;)^*$ is a Galois connection on the lattice of stable sets.  Assuming $x\perp z$, we then have $x\perp\Gamma z$, which means that $x\in(\Gamma z)^*=\{x\midsp\forall u(z\leq u\lra x\perp u)\}$. If $x\leq w$ and $z\leq u$, then by $x\perp z$ we also have $w\perp u$ and this means that $\Gamma x\subseteq(\Gamma z)^*$. Since $(\;)^*$ is antitone, $(\Gamma z)^{**}\subseteq(\Gamma x)^*$. But then $\Gamma z\subseteq(\Gamma z)^{**}\subseteq(\Gamma x)^*$ from which we obtain $z\in(\Gamma x)^*$ and so $z\perp x$ follows.
\end{proof}

\begin{lemma}\label{ex falso frame lemma}\rm
The following are equivalent
    \begin{enumerate}
    \item $\perp$ is irreflexive
    \item $A\cap A^*=\emptyset$.
    \end{enumerate}
\end{lemma}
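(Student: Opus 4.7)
The plan is very short because the statement is essentially an unpacking of the definition of the star operator, using only that sections of $\perp = S'_\vee$ are Galois (hence increasing) by axiom (F4).

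For the direction (1)$\Ra$(2), I would argue by contraposition. Suppose $x \in A \cap A^*$ for some stable set $A$. Unfolding $A^* = \{x \midsp \forall u \in A\; x \perp u\}$ and instantiating at the witness $u = x \in A$ immediately yields $x \perp x$, contradicting irreflexivity. So this direction is a one-line calculation.

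For the direction (2)$\Ra$(1), I would again argue by contraposition: assume some $x_0 \in X$ satisfies $x_0 \perp x_0$ and exhibit a stable set with nonempty intersection with its star. The natural candidate is $A = \Gamma x_0$, which is stable by Lemma~\ref{basic facts}(2). Obviously $x_0 \in A$, so it remains to show $x_0 \in (\Gamma x_0)^*$, i.e. $x_0 \perp u$ for every $u \geq x_0$. This is where axiom (F4) does the work: the section $x_0 S'_\vee = \{u \in X \midsp x_0 \perp u\}$ is a Galois set by (F4), and Galois sets are increasing by Lemma~\ref{basic facts}(3). Thus $x_0 \perp x_0$ forces $\Gamma x_0 \subseteq x_0 S'_\vee$, giving $x_0 \in (\Gamma x_0)^*$ and hence $x_0 \in A \cap A^* \neq \emptyset$.

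There is no genuine obstacle here; the only subtlety worth flagging explicitly is the appeal to the stability (and thereby monotonicity) of the sections of $\perp$, which is exactly what axiom (F4) guarantees and which is needed to promote the single instance $x_0 \perp x_0$ to the universal statement $x_0 \in (\Gamma x_0)^*$. Everything else is a direct reading of the definitions of $(\,)^*$ and of the incompatibility relation $\perp = S'_\vee$ from Definition~\ref{image-op-and-bot}.
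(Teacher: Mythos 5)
Your proposal is correct and follows essentially the same route as the paper: the forward direction is the same one-line unfolding of $(\,)^*$, and the converse uses the same witness $A=\Gamma x$ with $x\perp x$. The only difference is that you explicitly justify the step $x\perp x\Rightarrow x\in(\Gamma x)^*$ via axiom (F4) and Lemma~\ref{basic facts}(3), a detail the paper's proof leaves implicit.
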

\begin{proof}
If $x\in A\cap A^*\neq\emptyset$, then it must be that $x\perp x$ and thus $\perp$ is not irreflexive. Conversely, assume $A\cap A^*=\emptyset$, for any $A\in\gpsi$, but suppose that $x\perp x$ for some $x\in X$. Then $x\in(\Gamma x)^*$ and thus $\Gamma x\cap(\Gamma x)^*\neq\emptyset$, contradiction.
\end{proof}

It remains to examine the case where $(\;)^*$ is an involution. The same map $\nu$ is now both a minimal and a dual minimal quasi-complement (see Remarks~\ref{dual minimal rem} and \ref{involution rem}).

Let $\mathfrak{F}=(X,\upv,Y,S_\vee)$ be a frame satisfying axioms (F0)--(F4) of Table \ref{quasiframe axioms table} and where $\bot=S'_\vee$ is symmetric. Define a relation $R_\wedge\subseteq X\times Y$ by $xR_\wedge y$ iff $y\leq\widehat{\nu}(x)$. The analogues of the frame axioms (F2)--(F3) are assumed for the derived relation $R_\wedge$, as additional frame axioms, listed as (I1)--(I3) in Table~\ref{additional axioms table}.

\begin{table}[t]
\caption{Additional frame axioms for the case of involution}
\label{additional axioms table}
\begin{enumerate}
  \item[(I1)] For any $y\in Y$, the section $R_\wedge y$ of the derived relation $R_\wedge\subseteq X\times Y$, defined by $xR_\wedge y$ iff $y\leq \widehat{\nu}(x)$, is a closed element of $\gpsi$.
  \item[(I2)] For any $x\in X$, the section $xR_\wedge$ of the  relation $R_\wedge$ is decreasing (a downset).
  \item[(I3)] Both sections of the Galois dual relation $R'_\wedge\subseteq Y\times Y$ of $R_\wedge$ are Galois sets.
  \item[(I4)] For all $x\in X$, $\widetilde{\nu}(\widehat{\nu}(x))=x$
\item[(I5)] For all $y\in Y$, $\widehat{\nu}(\widetilde{\nu}(y))=y$.
\end{enumerate}
\hrulefill
\end{table}

Designate the generating point of $R_\wedge y$, for each $y\in Y$, by $\widetilde{\nu}(y)$. Since $R_\wedge y=\Gamma\widetilde{\nu}(y)\in\gpsi$, we obtain that $R'_\wedge y=\{\widetilde{\nu}(y)\}\rperp\in\gphi$.

Let $\eta_R:\powerset(Y)\lra\powerset(X)$ be the sorted image operator generated by the (sorted) relation $R_\wedge$, defined by equation~\eqref{eta-R-defn}, repeated below 
\[
\eta_R(V)=\{x\in X\midsp\exists y\in Y(xR_\wedge y\wedge y\in V)\}=\bigcup_{y\in V}R_\wedge y
\]
and let $\overline{\eta}_R:\gphi\lra\gpsi$ be the closure of its restriction to Galois (co-stable) sets, defined by $\overline{\eta}_R(B)=\bigvee_{y\in B}R_\wedge y$, and set $\overline{\eta}_\wedge(A)=\overline{\eta}_R(A')=\bigvee_{A\upv y}R_\wedge y$.

As explained in Remarks~\ref{dual minimal rem} and \ref{involution rem}, the analogues of 
 Lemma~\ref{s-nu-gamma} and Proposition~\ref{eta-S-distrib-property} hold, by essentially the same arguments, and we obtain that $\overline{\eta}_R(\Gamma y)=R_\wedge y=\eta_R(\Gamma y)=\Gamma\widetilde{\nu}(y)$ and that $\overline{\eta}_R(\bigvee_{i\in I}B_i)=\bigvee_{i\in I}\overline{\eta}_R(B_i)$. Hence, $\overline{\eta}_\wedge(\bigcap_{i\in I}A_i)=\overline{\eta}_R((\bigcap_{i\in I}A_i)') =\overline{\eta}_R(\bigvee_{i\in I}A'_i)=\bigvee_{i\in I}\overline{\eta}_R(A'_i)=\bigvee_{i\in I}\overline{\eta}_\wedge(A_i)$. Summarizing,
\begin{align}
\overline{\eta}_\vee(\bigvee_{i\in I}A_i) &= \bigcap_{i\in I}\overline{\eta}_\vee(A_i)\\
\overline{\eta}_\wedge(\bigcap_{i\in I}A_i) &= \bigvee_{i\in I}\overline{\eta}_\wedge(A_i)
\end{align}
and we now aim at proving that $\overline{\eta}_\vee=\overline{\eta}_\wedge$. For this to hold, two additional axioms need to be in place, namely axioms (I4) and (I5) in Table~\ref{additional axioms table}.

\begin{lemma}
  \label{involution frame lemma}\rm
Let $\mathfrak{F}=(X,\upv,Y,S_\vee)$ be a frame satisfying axioms (F0)--(F4) of Table \ref{quasiframe axioms table}. Let also  the relation $R_\wedge\subseteq X\times Y$ be defined by $xR_\wedge y$ iff $y\leq\widehat{\nu}(x)$, as above, and assume that the additional axioms (I1)--(I5) in Table~\ref{additional axioms table} hold. If the incompatibility relation $\bot=S'_\vee$ is symmetric, then $\overline{\eta}_\vee=\overline{\eta}_\wedge$ and, consequently, the map $A^*=\overline{\eta}_\vee(A)$ is an involution on $\gpsi$.
\end{lemma}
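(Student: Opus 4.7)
The plan has two halves: first I establish the pointwise equality $\overline{\eta}_\vee=\overline{\eta}_\wedge$ on all of $\gpsi$, then I derive the involution $A^{**}=A$ by combining this coincidence with the opposite distribution laws carried by the two operators.

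For the first half, I begin by unpacking the relation $\perp$. Since $\perp=S'_\vee$ and $yS_\vee b$ holds iff $\widehat{\nu}(b)\leq y$, monotonicity of $\upv$ in its second argument (Lemma~\ref{basic facts}) yields the pointwise identity $a\perp b$ iff $a\upv\widehat{\nu}(b)$, so that the assumed symmetry of $\perp$ becomes $a\upv\widehat{\nu}(b)$ iff $b\upv\widehat{\nu}(a)$. Using this together with axioms (I4) and (I5), I show directly that $A^*=\overline{\eta}_\wedge(A)$ for every $A\in\gpsi$. For $\overline{\eta}_\wedge(A)\subseteq A^*$: each $y$ with $A\upv y$ satisfies $y=\widehat{\nu}(\widetilde{\nu}(y))$ by (I5), so $A\upv y$ translates to $a\perp\widetilde{\nu}(y)$ for every $a\in A$; symmetry gives $\widetilde{\nu}(y)\in A^*$, and since $A^*$ is an increasing Galois set we obtain $\Gamma\widetilde{\nu}(y)\subseteq A^*$, from which $\overline{\eta}_\wedge(A)=\bigvee_{A\upv y}\Gamma\widetilde{\nu}(y)\subseteq A^*$ follows by stability of $A^*$. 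For the reverse inclusion, any $z\in A^*$ satisfies $a\perp z$ and hence $a\upv\widehat{\nu}(z)$ for every $a\in A$, so $\widehat{\nu}(z)\in A'$, and by (I4) $z=\widetilde{\nu}(\widehat{\nu}(z))$ is one of the generators of $\overline{\eta}_\wedge(A)$. Combined with Lemma~\ref{ast-is-eta-nu}, this yields $\overline{\eta}_\vee=\overline{\eta}_\wedge$.

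For the second half, I exploit that the two operators coincide while retaining their opposite distribution laws: $\overline{\eta}_\vee$ turns joins of stable sets into meets (Proposition~\ref{eta-S-distrib-property} composed with the Galois connection), while $\overline{\eta}_\wedge$ turns meets into joins by equation~\eqref{eta-wedge-codist}. Writing $A=\bigvee_{x\in A}\Gamma x$ and using Lemma~\ref{s-nu-gamma} together with $\overline{\eta}_\vee(A)=(\overline{\eta}_S(A))'$, I obtain $\overline{\eta}_\vee(A)=\bigcap_{x\in A}\lperp\{\widehat{\nu}(x)\}$. Applying $\overline{\eta}_\wedge$ and co-distributing over the intersection, each term $\overline{\eta}_\wedge(\lperp\{\widehat{\nu}(x)\})$ reduces to $\Gamma\widetilde{\nu}(\widehat{\nu}(x))=\Gamma x$ via (I4) and the analogue of Lemma~\ref{s-nu-gamma} for $\overline{\eta}_R$ (equation~\eqref{eta-R-Gamma-y}); hence $\overline{\eta}_\wedge\overline{\eta}_\vee(A)=\bigvee_{x\in A}\Gamma x=A$. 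Since $\overline{\eta}_\vee=\overline{\eta}_\wedge$, this is precisely $A^{**}=A$, the desired involution property.

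The main obstacle I anticipate is the first-half identification $A^*=\overline{\eta}_\wedge(A)$: the set $A^*$ is specified intensionally inside $X$ as $\{z:z\perp A\}$, whereas $\overline{\eta}_\wedge(A)$ is built as a join of closed generators $\Gamma\widetilde{\nu}(y)$ indexed by the dual Galois set $A'\subseteq Y$. Bridging the two descriptions requires simultaneously the bijective correspondence $\widehat{\nu}:X\leftrightarrows Y:\widetilde{\nu}$ supplied by (I4)--(I5) and the symmetry of $\perp$, the latter being what converts the asymmetric clause $z\perp A$ into the usable statement $A\upv\widehat{\nu}(z)$ that identifies $\widehat{\nu}(z)$ as a member of $A'$. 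Once this reparametrisation is available, the remaining computations are routine bookkeeping with the distribution laws already in hand.
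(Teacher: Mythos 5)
Your proposal is correct, but it reaches the conclusion by a different route than the paper. The paper first computes \emph{both} composites $\overline{\eta}_\wedge\overline{\eta}_\vee(A)=A$ and $\overline{\eta}_\vee\overline{\eta}_\wedge(A)=A$ using only (I4), (I5) and the two distribution laws — symmetry of $\perp$ plays no role at this stage — concludes that $\overline{\eta}_\vee,\overline{\eta}_\wedge$ form a dual equivalence (hence a Galois connection), invokes uniqueness of adjoints against equation~\eqref{eta-zeta galois} to get $\overline{\eta}_\wedge=\overline{\zeta}_\vee$, and only then brings in symmetry via Lemma~\ref{galois frame lemma} to identify $\overline{\zeta}_\vee$ with $\overline{\eta}_\vee$. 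You instead use symmetry at the outset, working pointwise with the reformulation $a\perp b$ iff $a\upv\widehat{\nu}(b)$ and the bijection $\widehat{\nu}\leftrightarrows\widetilde{\nu}$ from (I4)--(I5) to prove the set identity $A^*=\overline{\eta}_\wedge(A)$ directly (both inclusions check out: the join $\bigvee_{A\upv y}\Gamma\widetilde{\nu}(y)$ lands in the stable set $A^*$ by Lemma~\ref{basic facts}(6), and conversely every $z\in A^*$ reappears as the generator indexed by $\widehat{\nu}(z)\in A'$); you then need only one composite computation, $\overline{\eta}_\wedge\overline{\eta}_\vee=\mathrm{id}$, to obtain $A^{**}=A$. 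Your argument is the more elementary one — it dispenses with the auxiliary residual $\overline{\zeta}_\vee$ and the uniqueness-of-adjoints step, and makes visible exactly where symmetry is consumed. The paper's argument buys a cleaner separation of hypotheses: it establishes that (I1)--(I5) alone already force $\overline{\eta}_\vee$ and $\overline{\eta}_\wedge$ to be mutually inverse dual equivalences (so $\overline{\eta}_\wedge$ is canonically the right adjoint of $\overline{\eta}_\vee$), isolating symmetry of $\perp$ as precisely the condition collapsing the adjoint pair to a single self-inverse map.
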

\begin{proof}
We calculate that 
\begin{tabbing}
$\overline{\eta}_\wedge\overline{\eta}_\vee(A)    $\hskip2mm\= = \hskip2mm\= $\overline{\eta}_\wedge\overline{\eta}_\vee(\bigvee_{x\in A}\Gamma x)$ \hskip8mm\= By join-density of closed elements
\\
\>=\> $\overline{\eta}_\wedge(\bigcap_{x\in A}\overline{\eta}_\vee(\Gamma x))$ \>  $\overline{\eta}_\vee$ co-distributes over joins
\\
\>=\> $\overline{\eta}_\wedge(\bigcap_{x\in A}\rperp\{\widehat{\nu}(x)\})$  \>  $\overline{\eta}_\vee(\Gamma x)=(\overline{\eta}_S(\Gamma x))'=\rperp\{\widehat{\nu}(x)\}$, Lemma~\ref{s-nu-gamma}
\\
\>=\> $\bigvee_{x\in A}\overline{\eta}_\wedge(\rperp\{\widehat{\nu}(x)\})$   \> $\overline{\eta}_\wedge$ co-distributes over meets
\\
\>=\> $\bigvee_{x\in A}\overline{\eta}_R(\Gamma\widehat{\nu}(x))$ \> By definition $\overline{\eta}_\wedge(A)=\overline{\eta}_R(A')$\\
\>=\>$\bigvee_{x\in A}\Gamma(\widetilde{\nu}(\widehat{\nu}(x)))$ \> By $\overline{\eta}_R(\Gamma y)=\Gamma\widetilde{\nu}(y)$
\\
\>=\> $\bigvee_{x\in A}\Gamma x$\> By axiom (I4)
\\
\>=\> $A.$\> By join-density of closed elements
\\[3mm]
$\overline{\eta}_\vee\overline{\eta}_\wedge(A)$ \>=\> $\overline{\eta}_\vee\overline{\eta}_\wedge(\bigcap_{A\upv y}\rperp\{y\})$ \>   By meet-density of open elements
\\
\>=\> $\overline{\eta}_\vee(\bigvee_{A\upv y}\overline{\eta}_\wedge(\rperp\{y\}))$ \> $\overline{\eta}_\wedge$ co-distributes over meets
\\ 
\>=\> $\overline{\eta}_\vee(\bigvee_{A\upv y}\overline{\eta}_R(\Gamma y))$ \> By definition $\overline{\eta}_\wedge(A)=\overline{\eta}_R(A')$
\\
\>=\> $\overline{\eta}_\vee(\bigvee_{A\upv y}\Gamma\widetilde{\nu}(y))$ \> By $\overline{\eta}_R(\Gamma y)=\Gamma\widetilde{\nu}(y)$
\\
\>=\> $\bigcap_{A\upv y}\overline{\eta}_\vee(\Gamma\widetilde{\nu}(y))$\> $\overline{\eta}_\vee$ co-distributes over joins
\\
 \>=\> $\bigcap_{A\upv y}\rperp(\overline{\eta}_S(\Gamma\widetilde{\nu}(y)))$ \> By definition $\overline{\eta}_\vee(A)=(\overline{\eta}_S(A))'=\rperp(\overline{\eta}_S(A))$
 \\
\>=\> $\bigcap_{A\upv y}\rperp(\Gamma\widehat{\nu}(\widetilde{\nu}(y)))$\> By $\overline{\eta}_S(\Gamma x)=\Gamma\widehat{\nu}(x)$
\\
\>=\> $\bigcap_{A\upv y}\rperp\{\widehat{\nu}(\widetilde{\nu}(y))\}$\> By $\rperp(\Gamma y)=\rperp\{y\}$, Lemma~\ref{basic facts}
\\
 \>=\> $\bigcap_{A\upv y}\rperp\{y\}$\> By axiom (I5)
 \\
\>=\> $A$.\> By meet-density of open elements
\end{tabbing}
In particular, we then have $A\subseteq\overline{\eta}_\vee(C)$ iff $\overline{\eta}_\wedge\overline{\eta}_\vee(C)\subseteq\overline{\eta}_\wedge(A)$ iff $C\subseteq\overline{\eta}_\wedge(A)$, so that $\overline{\eta}_\vee,\overline{\eta}_\wedge$ form a Galois connection on $\gpsi$ (in fact they form a dual equivalence, as just proven above). However, we have already established, see equation~\eqref{eta-zeta galois}, that $\overline{\eta}_\vee$ is Galois connected with the map $\overline{\zeta}_\vee$ and thereby, by uniqueness of adjoints, we conclude that $\overline{\eta}_\wedge=\overline{\zeta}_\vee$. 

It then follows that if, in addition, the incompatibility relation $\bot=S'_\vee$ is symmetric, then using Lemma~\ref{galois frame lemma} we conclude that $\overline{\eta}_\wedge=\overline{\zeta}_\vee=\overline{\eta}_\vee$.
\end{proof}

\begin{coro}[Correspondence]\rm
\label{frame conditions for complex algebra coro}
Let $\mathfrak{F}=(X,\upv,Y,S_\vee)$ be a frame satisfying axioms (F0)--(F4) of Table \ref{quasiframe axioms table} and let $\mathfrak{F}^+=(\gpsi,\subseteq,\bigcap,\bigvee,\emptyset,X,(\;)^*)$ be  its full complex algebra of Galois stable sets.
\begin{enumerate}
  \item $\mathfrak{F}^+$ is a complete lattice with a minimal quasi-complementation operator $(\;)^*$ on stable sets.
  \item $\mathfrak{F}^+$ is a complete lattice with a quasi-complementation operator $(\;)^*$ which is a Galois connection on stable sets iff $\perp$ is symmetric.
  \item $\mathfrak{F}^+$ is a complete lattice with an involution $(\;)^*$ iff (a) $\perp$ is symmetric and (b) axioms (I1)--(I5) in Table~\ref{additional axioms table} hold in the frame.
  \item $\mathfrak{F}^+$ is a complete ortholattice iff (a) $\perp$ is symmetric and (b) irreflexive and (c)  axioms (I1)--(I5)  in Table~\ref{additional axioms table} hold in the frame.
  \item $\mathfrak{F}^+$ is a complete De Morgan algebra if (a) $\perp$ is symmetric, (b)   axioms (I1)--(I5) hold in the frame and (c) the sections of the Galois dual relation $R'_\leq$ of the upper bound relation $R_\leq\subseteq X\times (X\times X)$, defined by $uR_\leq xz$ iff $x\leq u$ and $z\leq u$,  are stable.
  \item $\mathfrak{F}^+$ is a complete Boolean algebra if the conditions (a)--(c) of the previous case hold in the frame and (d) $\perp$ is irreflexive.
\end{enumerate}
\end{coro}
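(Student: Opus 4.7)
The plan is to verify each of the six items by coupling an algebraic characterization from Lemma~\ref{neg basic facts lemma} with the frame-to-algebra bridge supplied by one of the preceding lemmas. For part~(1), Corollary~\ref{star-codistrib-coro} already establishes that $(\;)^*$ co-distributes over arbitrary joins of stable sets; instantiating at the empty index set yields the normality law $\emptyset^*=X$, and specializing to binary joins yields the $(\vee\wedge)$ identity and hence antitonicity, placing $\mathfrak{F}^+$ in the variety $\mathbb{M}$ of Figure~\ref{quasi-complements figure}. For part~(2), Lemma~\ref{galois frame lemma} shows that symmetry of $\perp$ is equivalent to $A\subseteq A^{**}$ for every $A\in\gpsi$, which by Lemma~\ref{neg basic facts lemma}(1) is precisely the algebraic condition for $(\;)^*$ to be Galois-connected with itself.

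For part~(3), the ``if'' direction is exactly the content of Lemma~\ref{involution frame lemma}: under symmetry of $\perp$ together with axioms (I1)--(I5), the auxiliary map $\overline{\eta}_\wedge$ coincides both with the adjoint $\overline{\zeta}_\vee$ and with $(\;)^*=\overline{\eta}_\vee$, so $(\;)^{**}$ is the identity on $\gpsi$. For the converse, any involution is a fortiori a Galois connection on itself, so part~(2) forces $\perp$ to be symmetric. The axioms (I1)--(I5) are then derivable by defining $\widetilde{\nu}$ as the inverse of the clopen-to-clopen bijection induced by $\widehat{\nu}$ (which exists since $(\;)^*$ is now a dual automorphism of $\gpsi$ sending closed elements to open elements and vice-versa), and reading off (I1)--(I3) as duals of (F2)--(F4) for the derived relation $R_\wedge$, while (I4)--(I5) merely assert that $\widehat{\nu}$ and $\widetilde{\nu}$ are mutually inverse.

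For part~(4), Lemma~\ref{ex falso frame lemma} gives $A\cap A^*=\emptyset$ iff $\perp$ is irreflexive, so combined with involution from part~(3) and Lemma~\ref{neg basic facts lemma}(6) we conclude that $\mathfrak{F}^+$ is an ortholattice. For part~(5), Theorem~\ref{upper bound rel prop} shows that section-stability of the Galois dual relation $R'_\leq$ of the upper-bound relation forces $\gpsi$ to be completely distributive; combined with the involution from part~(3) and Lemma~\ref{neg basic facts lemma}(5), $\mathfrak{F}^+$ is a De Morgan algebra. For part~(6), adding irreflexivity of $\perp$ to the hypotheses of~(5) supplies $A\cap A^*=\emptyset$ in a distributive lattice with involution; together with $\emptyset^*=X$ from part~(1), application of $(\;)^*$ and the De Morgan identity yield $A\vee A^*=X$, so $(\;)^*$ is the (necessarily unique) complement in the distributive lattice, making $\mathfrak{F}^+$ a Boolean algebra.

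The main obstacle I anticipate is the ``only if'' direction of part~(3): explicitly reconstructing the dual point map $\widetilde{\nu}$ and verifying that the derived relation $R_\wedge$ satisfies (I1)--(I5) purely from the assumption that $(\;)^*$ is an involution on $\gpsi$. The remaining parts are direct assemblies of the quoted lemmas with the corresponding entries of Lemma~\ref{neg basic facts lemma}.
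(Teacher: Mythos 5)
Your overall strategy coincides with the paper's: the published proof is the single sentence ``Immediate, given Lemmas~\ref{galois frame lemma}, \ref{ex falso frame lemma}, \ref{involution frame lemma} and Theorem~\ref{upper bound rel prop}'', and your assembly of exactly those results (together with Corollary~\ref{star-codistrib-coro} and Lemma~\ref{neg basic facts lemma}) for parts (1), (2), (5), (6) and for the ``if'' halves of (3) and (4) is correct. Your derivation of complementation in part (6) directly from $A\cap A^*=\emptyset$, $\emptyset^*=X$ and the dual De Morgan law of Lemma~\ref{neg basic facts lemma}(3) is a slightly more explicit route than citing Lemma~\ref{neg basic facts lemma}(7), but it is sound.

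The one genuine gap is where you yourself flag it: the ``only if'' half of part (3) (and hence of (4)) with respect to axioms (I1)--(I5). Symmetry of $\perp$ does follow from part (2), as you say. But your claim that $(\;)^*$ is then ``a dual automorphism of $\gpsi$ sending closed elements to open elements and vice-versa'' is only half established: $(\Gamma x)^*={}\rperp\{\widehat{\nu}(x)\}$ holds by Lemma~\ref{s-nu-gamma}, while the ``vice-versa'' is precisely axiom (I1) in disguise. Under symmetry one computes $R_\wedge y=\{x\midsp {}\rperp\{y\}\subseteq(\Gamma x)^*\}=({}\rperp\{y\})^*$, so (I1) demands that $({}\rperp\{y\})^*$ be a principal upset $\Gamma u$ for \emph{every} $y\in Y$; this does not follow from $A^{**}=A$ on $\gpsi$ alone unless $\widehat{\nu}$ is (up to separation) surjective onto $Y$, since an involution of a complete lattice need not carry the meet-generating family of open elements onto the join-generating family of closed ones. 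Moreover, even granting (I1), axiom (I3) (stability of the sections of $R'_\wedge$) is a further first-order condition that your bijection argument does not touch. To be fair, the paper's own one-line proof does not supply this direction either --- Lemma~\ref{involution frame lemma} is one-directional --- so you have correctly located the weak point of the corollary as stated; but the repair you sketch does not close it.
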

\begin{proof}
  Immediate, given Lemmas~\ref{galois frame lemma}, \ref{ex falso frame lemma}, \ref{involution frame lemma} and Theorem \ref{upper bound rel prop}.
\end{proof}

\begin{coro}[Soundness]\label{soundness}\rm
Each of the cases in Corollary~\ref{frame conditions for complex algebra coro} implies that the logic of the respective variety is sound in its class of frames, as this class is determined by the respective conditions for each case.\telos
\end{coro}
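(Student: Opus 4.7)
The proof of soundness is essentially a direct transfer from the algebraic setting to the frame setting via the complex algebra construction. The plan is to show that, for any frame $\mathfrak{F}=(X,\upv,Y,S_\vee)$ satisfying the axioms corresponding to one of the six cases of Corollary~\ref{frame conditions for complex algebra coro}, validity of a sequent $\varphi\proves\psi$ in $\mathfrak{F}$ reduces to validity of the inequation $\val{\varphi}\subseteq\val{\psi}$ in the complex algebra $\mathfrak{F}^+$, which in turn follows from algebraic soundness of the logic in the corresponding variety.

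First I would record that, by the definition of the interpretation $\val{\cdot}$ in Section~\ref{section: frames for quasi comp lats}, a valuation $v:P\lra\gpsi$ extends to all sentences via clauses that match term by term the operations of $\mathfrak{F}^+=(\gpsi,\subseteq,\bigcap,\bigvee,\emptyset,X,(\;)^*)$: conjunction is interpreted by $\cap$, disjunction by the join in $\gpsi$ (which is $(\;)^{\prime\prime}$ of the union), $\top,\bot$ by $X,\emptyset$, and negation by $(\;)^*$. Hence $\val{\cdot}$ is nothing but the unique algebraic homomorphism from the term algebra to $\mathfrak{F}^+$ extending $v$, and the satisfaction relation $x\forces\varphi$ coincides with $x\in\val{\varphi}$.

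The main step is then to invoke Corollary~\ref{frame conditions for complex algebra coro}: in each of the six cases, the respective frame axioms ensure that $\mathfrak{F}^+$ belongs to the corresponding variety from Figure~\ref{quasi-complements figure}, namely $\mathbb{M}, \mathbb{G}, \mathbb{INV}, \mathbb{O}, \mathbb{DMA}$, or $\mathbb{BA}$. Once we know $\mathfrak{F}^+$ lies in the variety $\mathbb{V}$ associated with the logic $\mathbf{\Lambda}$, algebraic soundness (the easy direction of completeness, already observed in the proof that $\mathbf{M}$ is sound and complete with respect to $\mathbb{M}$ and in Proposition~\ref{completeness}) yields that every provable sequent $\varphi\proves_{\mathbf{\Lambda}}\psi$ satisfies $\val{\varphi}\subseteq\val{\psi}$ in $\mathfrak{F}^+$, i.e. $\varphi\forces_{\mathfrak{F}}\psi$ for the arbitrary valuation $v$ that generated $\val{\cdot}$.

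No genuine obstacle is anticipated, since all the work has already been done: the correspondence lemmas (Lemma~\ref{galois frame lemma}, Lemma~\ref{ex falso frame lemma}, Lemma~\ref{involution frame lemma}) together with Theorem~\ref{upper bound rel prop} translate each frame-theoretic condition into the matching algebraic identity on $\mathfrak{F}^+$. The only mildly subtle point is to observe that the frame relation $\forces$ is defined relative to an arbitrary assignment $v:P\lra\gpsi$ of stable sets to propositional variables, so that universal quantification over $v$ on the frame side matches universal quantification over valuations on the algebra side; this makes $\varphi\forces_{\mathfrak{F}}\psi$ equivalent to $\varphi\forces_{\mathfrak{F}^+}\psi$, and closes the argument.
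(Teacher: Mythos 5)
Your proposal is correct and follows exactly the route the paper intends: the paper states this corollary as immediate (with no separate proof), the point being precisely that Corollary~\ref{frame conditions for complex algebra coro} places $\mathfrak{F}^+$ in the relevant variety, the frame interpretation $\val{\cdot}$ is just the algebraic interpretation in $\mathfrak{F}^+$, and algebraic soundness of the logic in that variety then transfers to the frame. Your write-up merely makes explicit the steps the paper leaves tacit.
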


\section{Choice-Free Representation of NLEs}
\label{rep section}
In our lattice representation and duality \cite{iulg,sdl} the strategy followed has been to derive the representation of a lattice $\mathbf{L}$ by combining the representations of its two semilattice reducts $\mathbf{L}_\wedge,\mathbf{L}_\vee$, by means of the representation of the trivial Galois connection (the identity map) $\imath:\mathbf{L}_\wedge\iso(\mathbf{L}_\vee)^\mathrm{op}$ as a binary relation, generalizing on Goldblatt's representation \cite{goldb} of ortholattices. We take the same approach here, except for switching from a Stone topology to a spectral one, hence we start by discussing the semilattice case.

\subsection{Semilattice Representation}
\label{section: sem rep}
Throughout the rest of this article principal filters of meet semilattices and lattices are typically designated with the notation $x_a$ ($=a{\uparrow}$), for an element $a$ of the (semi)lattice. Join semilattice and lattice principal ideals are similarly designated by $y_a=a{\downarrow}$. We typically use $x,z$ for filters, $y,v$ for ideals and $u,w$ for either case.

Let $\mathbf{M}=(M,\leq, \wedge, 1)$ be a meet semilattice with a unit (top) element $1$ and $X=\filt(\mathbf{M})$ its set of proper filters (we assume filters are nonempty, i.e. $1\in x$ for any $x\in X$). For each $a\in M$, let $X_a=\{x\in X\midsp a\in x\}$ and $\mathcal{B}=\{X_a\subseteq X\midsp a\in M\}$ and notice that $X_a\cap X_b=X_{a\wedge b}\in\mathcal{B}$, so that $\mathcal{B}$ itself is a meet semilattice with unit element $X=X_1$.

Let $\mathfrak{X}=(X,\mathcal{B})$ be the topological space with carrier set $X$ and topology $\Omega$ generated by taking $\mathcal{B}$ as a basis. We refer to $\mathfrak{X}$ as the {\em dual space} of $\mathbf{M}$.

\begin{lemma}\rm
\label{sobriety lemma}
Let $\mathbf{M}$ and $\mathfrak{X}$ be as above.
Given a filter $\mathcal{F}$ in the lattice $\Omega(\mathfrak{X})$ of open sets of $\mathfrak{X}$,
  define $\sub{x}{\mathcal{F}}=\{a\in M\midsp X_a\in\mathcal{F}\}$,  and observe that  $\sub{x}{\mathcal{F}}$ is a filter of $\mathbf{M}$, since $\mathcal{F}$ is a filter.
\begin{enumerate}
\item  For any basic open set $X_a\in\mathcal{B}$, $\sub{x}{\mathcal{F}}\in X_a$ iff $X_a\in\mathcal{F}$.
\item  For any open set $U$ of $\mathfrak{X}$, if $\sub{x}{\mathcal{F}}\in U$, then $U\in\mathcal{F}$.
\item  If $\mathcal{F}$ is a completely prime filter in the lattice $\Omega(\mathfrak{X})$ of open sets of $\mathfrak{X}$, then $\sub{x}{\mathcal{F}}\in U$ iff $U\in\mathcal{F}$.
\end{enumerate}
\end{lemma}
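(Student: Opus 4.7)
\medskip

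\noindent\textbf{Proof proposal.} The three parts are essentially an unpacking of the definitions, with the basis $\mathcal{B}$ doing all the work. I would prove them in the stated order, since (2) uses (1) and (3) uses both (1) and (2).

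For (1), I would simply chase definitions: $\sub{x}{\mathcal{F}}\in X_a$ means $a\in\sub{x}{\mathcal{F}}$, which by the definition of $\sub{x}{\mathcal{F}}$ is exactly $X_a\in\mathcal{F}$.

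For (2), suppose $\sub{x}{\mathcal{F}}\in U$ with $U$ open. Since $\mathcal{B}$ is a basis for $\Omega$, there exists $a\in M$ with $\sub{x}{\mathcal{F}}\in X_a\subseteq U$. By part (1), $X_a\in\mathcal{F}$, and since $\mathcal{F}$ is an upset in $\Omega(\mathfrak{X})$ (being a filter), $U\in\mathcal{F}$.

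For (3), the left-to-right direction is just part (2). Conversely, suppose $U\in\mathcal{F}$. Since $U$ is open and $\mathcal{B}$ generates $\Omega$ as a basis, we may write $U=\bigcup_{i\in I}X_{a_i}$ for some family $\{a_i\}_{i\in I}\subseteq M$. Because $\mathcal{F}$ is completely prime and $\bigcup_{i\in I}X_{a_i}=U\in\mathcal{F}$, there exists $i\in I$ with $X_{a_i}\in\mathcal{F}$. By part (1), $\sub{x}{\mathcal{F}}\in X_{a_i}\subseteq U$, as required.

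The only subtle point is making sure $\sub{x}{\mathcal{F}}$ is indeed a proper filter of $\mathbf{M}$ so that it lies in $X$; this is the assertion made parenthetically in the statement and follows because $X_1=X$ is in every filter $\mathcal{F}$ (giving $1\in\sub{x}{\mathcal{F}}$), $X_a\cap X_b=X_{a\wedge b}$ gives closure under meets, and upward closure under $\leq$ follows from $a\leq b\Ra X_a\subseteq X_b$ together with $\mathcal{F}$ being an upset. I do not expect a serious obstacle here; the role of the lemma is to prepare the sobriety argument (every completely prime filter of opens arises from a point), which is why the proof is uniformly an exercise in basis manipulation.
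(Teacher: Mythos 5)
Your proposal is correct and follows essentially the same route as the paper's own proof: part (1) by unfolding the definition of $\sub{x}{\mathcal{F}}$, part (2) by locating a basic open $X_a$ with $\sub{x}{\mathcal{F}}\in X_a\subseteq U$ and using that $\mathcal{F}$ is an upset, and part (3) by writing $U$ as a union of basic opens and invoking complete primeness. The additional check that $\sub{x}{\mathcal{F}}$ is a filter of $\mathbf{M}$ (via $X_1=X\in\mathcal{F}$ and $X_a\cap X_b=X_{a\wedge b}$) is a sound elaboration of what the paper leaves as an observation.
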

\begin{proof}
  Claim (1) is obvious, following from the definition of $\sub{x}{\mathcal{F}}$.

For (2), if $U$ is open, let $E\subseteq M$ be such that $U=\bigcup_{e\in E}X_e$. If $\sub{x}{\mathcal{F}}\in U$, then let $e\in E$ be an element such that $\sub{x}{\mathcal{F}}\in X_e$. By (1), $X_e\in\mathcal{F}$ and then since $X_e\subseteq U$ and $\mathcal{F}$ is a filter, $U\in\mathcal{F}$ follows.

For (3), it suffices to show, given (2) above, that if $\mathcal{F}$ is completely prime and $U\in\mathcal{F}$, then $\sub{x}{\mathcal{F}}\in U$. Now $U=\bigcup_{e\in E}X_e$ for some $E\subseteq M$ and then by complete primeness of $\mathcal{F}$ we get $X_e\in\mathcal{F}$, for some $e\in E$. It then follows by part (1) that $\sub{x}{\mathcal{F}}\in X_e\subseteq U$.
\end{proof}

Recall that for a point $x$ in a topological space, its open neighborhood $N^o(x)$ is defined as the set of open sets containing $x$. This gives rise to the definition of a partial order induced by the topology, the specialization order, defined by $x\sqleq u$ iff $N^o(x)\subseteq N^o(u)$.
Given any space $\mathfrak{X}$, a {\em filter} $F$ of $\mathfrak{X}$ is a non-empty upper set (with respect to the specialization order $\sqsubseteq$ on $\mathfrak{X}$) such that for any $x,z\in F$ a lower-bound  $u\in X$ of $\{x,z\}$ is in $F$. We let ${\tt KOF}(\mathfrak{X})$ designate the family of compact-open filters of $\mathfrak{X}$, following the notation of \cite{Moshier2014a}.

We draw the reader's attention to the fact that the proof of compactness in the next proposition considers an open cover by sets in the basis of the topology. This is in contrast with our proof of the same result in \cite{duality2}, where the Stone topology considered in \cite{duality2} is generated by a subbasis and, thereby, an appeal to the Alexander subbasis lemma was involved, thus making the proof of \cite{duality2} depend on assuming Zorn's lemma.

\begin{prop}\label{spectral prop}
  \rm
  Let $\mathbf{M}$ be a meet semilattice, $\mathfrak{X}=(X,\mathcal{B})$ its dual topological space (where $X=\filt(\mathbf{M})$ and $\mathcal{B}=\{X_a\midsp a\in M\}$ is a basis for the topology $\Omega$ on $X$).
\begin{enumerate}
 \item  The space $\mathfrak{X}$ is a spectral space.
 \item $\mathcal{B}=\{X_a\midsp a\in M\}={\tt KOF}(\mathfrak{X})$.
\end{enumerate}
\end{prop}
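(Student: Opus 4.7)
The plan is to verify the four defining properties of a spectral space---$T_0$, compactness, sobriety, and having a basis of compact open sets closed under finite intersections---and then to characterize the compact open filters. The family $\mathcal{B}$ is closed under finite intersections since $X_a\cap X_b=X_{a\wedge b}$ and $X=X_1$, so $\mathcal{B}$ is a bounded sub-meet-semilattice of $\Omega$. The $T_0$ axiom is immediate: if $x\neq z$ are distinct filters then some $a\in M$ belongs to one and not the other, and $X_a$ separates them.

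The heart of the argument, and the precise point at which choice is avoided, is compactness. I would argue directly at the level of basic opens: suppose $X_a\subseteq\bigcup_{i\in I}X_{b_i}$. Since the principal filter $x_a$ belongs to $X_a$, it lies in some $X_{b_i}$, so $b_i\in x_a$ and hence $a\leq b_i$; upward closure of filters under $\leq$ then yields $X_a\subseteq X_{b_i}$, giving a one-element subcover. Hence every $X_a$, and in particular $X=X_1$, is compact, side-stepping the appeal to Alexander's subbasis lemma used in \cite{duality2}. Sobriety then follows from Lemma~\ref{sobriety lemma}(3): any completely prime filter $\mathcal{F}$ of open sets is the open-neighborhood filter of the point $\sub{x}{\mathcal{F}}$, with uniqueness supplied by $T_0$.

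For part (2), observe first that the specialization order on $\mathfrak{X}$ is precisely set inclusion of filters, since $x\sqleq z$ iff $a\in x\Ra a\in z$ for every $a\in M$. The inclusion $\mathcal{B}\subseteq\mathtt{KOF}(\mathfrak{X})$ then reduces to checking that each $X_a$ is a filter in this order: it is plainly an upper set, and for any $x,z\in X_a$ the principal filter $x_a$ is a common lower bound still lying in $X_a$. Conversely, let $F\in\mathtt{KOF}(\mathfrak{X})$. Writing $F$ as a union of basic opens and applying compactness, $F=X_{a_1}\cup\cdots\cup X_{a_n}$, and each $x_{a_i}$ belongs to $F$. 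The filter condition applied to $\{x_{a_1},\ldots,x_{a_n}\}$ yields some $u\in F$ with $u\subseteq x_{a_i}$ for every $i$; choosing $k$ with $u\in X_{a_k}$ gives $a_k\in u\subseteq x_{a_i}$, i.e.\ $a_i\leq a_k$ for every $i$, so $X_{a_i}\subseteq X_{a_k}$ and hence $F=X_{a_k}\in\mathcal{B}$.

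The main obstacle, beyond routine bookkeeping, is keeping the compactness proof strictly within basic opens so as to avoid any non-constructive extension principle. The principal-filter witness $x_a\in X_a$ is what makes this possible and is the crucial difference from the Stone-topology setup of \cite{duality2}; the same witness then drives the identification of compact open filters with basic opens in part (2).
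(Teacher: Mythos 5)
Your proof is correct and follows essentially the same route as the paper's: the same $T_0$ and closure-under-intersection checks, the same principal-filter witness $x_a$ giving a one-element subcover of any basic cover of $X_a$ (which is exactly how the paper avoids Alexander's subbasis lemma), sobriety delegated to Lemma~\ref{sobriety lemma}(3), and the same compactness-plus-filter-condition argument identifying $\mathtt{KOF}(\mathfrak{X})$ with $\mathcal{B}$. The only difference is a minor streamlining in part (2), where you conclude $F=X_{a_k}$ directly from $a_i\leq a_k$ for all $i$ rather than first establishing $F=\Gamma u$ for $u=\bigcap_{i=1}^n x_{a_i}$ as the paper does.
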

\begin{proof}
  Recall that a topological space is {\em spectral} if it is $T_0$,  coherent, compact and sober, which we prove in turn in order to establish part (1).

  For the $T_0$ property, if $x\neq z$ are distinct filters, without loss of generality we may assume that $a\in x$, but $a\not\in z$, for some semilattice element $a\in M$. Then the open set $X_a$ separates $x,z$, since $x\in X_a$ but $z\not\in X_a$.

  For  the coherence property we verify that the  basis $\mathcal{B}$ of the topology consists of compact-open sets and that it is closed under finite intersections. For the latter requirement,
  $\mathcal{B}$ is easily seen to be closed under finite intersections, since $X_a\cap X_b=X_{a\wedge b}$ and the intersection of the empty family of $X_a's$ is $X$ itself, which is identical to $X_1=\{x\in X\midsp 1\in x\}$. For the first requirement of coherence, the $X_a's$ are certainly open, by definition of the topology. For compactness, let $C\subseteq M$ and suppose that $\{X_e\midsp e\in C\}$ covers $X_a$, i.e. $X_a\subseteq\bigcup_{e\in C}X_e$.  Then the principal filter $x_a=a\!\uparrow{\in}\; X_a$ is in $X_e$, for some $e\in C$, hence $e\in x_a$, i.e. $a\leq e$. Then $e\in x$, for any $x\in X_a$ and this shows that $X_a\subseteq X_e=\{z\in X\midsp e\in z\}$. Hence $\{X_e\}$, for this $e$, is the needed finite subcover of $X_a$.

  Since $X=X_1=\{x\in X\midsp 1\in x\}$, compactness of $\mathfrak{X}$ follows from the previous argument.

  Sobriety of the space is equivalent to the requirement that every completely prime filter $\mathcal{F}$ in the lattice $\Omega(\mathfrak{X})$ of open sets of $\mathfrak{X}$ is generated by a single point $\sub{x}{\mathcal{F}}$, in other words that $\mathcal{F}=\{U\in\Omega(\mathfrak{X})\midsp \sub{x}{\mathcal{F}}\in U\}$. This was shown to hold in Lemma \ref{sobriety lemma}, part (3).

For part (2), left to right, $X_a$ is compact-open, by the proof of coherence for $\mathfrak{X}$ in part (1) of this proposition. Furthermore, $x\in X_a$ iff $a\in x$ iff $x_a\subseteq x$ iff $x\in\Gamma x_a$. Hence $X_a$ is a (principal) filter.

Conversely, let $F\subseteq X$ be a compact-open filter of $X$.  Being open, let $F=\bigcup_{a\in E\subseteq M}X_a$, so that by compactness, $F=X_{a_1}\cup\cdots\cup X_{a_n}$ for some $n$. By $a_i{\uparrow}=x_{a_i}\in X_{a_i}\subseteq F$, all the $x_{a_i}$'s, for $i=1,\ldots, n$, are in $F$, hence so is their meet (intersection), since $F$ is a filter. We let $u=\bigcap_{i=1}^nx_{a_i}$ and show that $F=\Gamma u=u{\uparrow}$. The right-to-left inclusion is obvious since $u\in F$, which is a filter, so $\Gamma u\subseteq F$. For the converse inclusion, let $z\in F$, so that $z\in X_{a_k}=\Gamma x_{a_k}$ for some $k\in\{1,\ldots,n\}$, hence $x_{a_k}\subseteq z$. By definition of $u$ we obtain $u\subseteq x_{a_k}\subseteq z$ and then $z\in\Gamma u$.  Hence $F\subseteq\Gamma u$ follows, too. Hence $F=\Gamma u=\bigcup_{i=1}^nX_{a_i}$ and thus $u\in X_{a_i}=\Gamma x_{a_i}$ for some $i$. But then $x_{a_i}\subseteq u=\bigcap_{i=1}^nx_{a_i}\subseteq x_{a_i}$ so that $u=x_{a_i}$, for this $i$, and so $F=\Gamma x_{a_i}=X_{a_i}\in\mathcal{B}$.
\end{proof}

It should be pointed out that, except for the phrasing, notation and detail, the arguments in the proofs of Lemma \ref{sobriety lemma} and Proposition \ref{spectral prop} are the same as these involved in showing that the space of proper filters of a Boolean algebra is spectral \cite[Proposition 3.12]{choice-free-BA}, or that the space of proper filters of an ortholattice is spectral \cite[Proposition 3.4.1]{choice-free-Ortho}. It is really only the semilattice-structure that is relevant in the argument, which is one of the reasons that we included a proof of Proposition \ref{spectral prop}, the other reason relating to the observation made in \cite{sdl,duality2} that a lattice can be always regarded as a diagram of dually isomorphic meet semilattices. For the case of Boolean algebras and ortholattices, this duality may be taken to be Boolean complementation, or ortho-complementation, respectively. For general bounded lattices the semilattice duality can be taken to be the identity map $\imath:\mathbf{L}_\wedge\leftrightarrows(\mathbf{L}_\vee)^\partial$, where $\mathbf{L}_\vee=(\mathbf{L}_\wedge)^\partial$, as in \cite{sdl,duality2}, and as we explain in more detail in the sequel.

Note that the topology $\Omega$ on $X$ is the Scott topology, with respect to the specialization order $\sqleq$ on $X$ \cite[chapter II.1]{stone-spaces},
which is inclusion of filters (of $\mathbf{M}$).
To see that specialization coincides with filter inclusion note that if $x\sqleq z$, i.e. $N^o(x)\subseteq N^o(z)$ and $a\in x$, then $x\in X_a\in N^o(x)\subseteq N^o(z)$, hence also $z\in X_a$, i.e. $a\in z$ and so $x\subseteq z$. Conversely, if $x\subseteq z$ and $U$ is an open neighborhood of $X$, let $X_a$ be a basic open such that $x\in X_a\subseteq U$. From $x\in X_a$ we get $a\in x\subseteq z$, so also $z\in X_a\subseteq U$, hence $U\in N^o(z)$, i.e. $N^o(x)\subseteq N^o(z)$ which by definition means that $x\sqleq z$.

It follows from Proposition \ref{spectral prop} that the space $\mathfrak{X}$ is an HMS space (named so in \cite{Moshier2014a},   in honour of Hofmann, Mislove and Stralka), defined by a set of equivalent conditions in \cite[Theorem 2.5]{Moshier2014a}).

The following representation result is an immediate consequence of Proposition \ref{spectral prop}.

\begin{coro}
\label{kof coro}
  \rm
  Given a meet semilattice $\mathbf{M}$, the map $a\mapsto X_a$ is a semilattice isomorphism $\mathbf{M}\iso{\tt KOF}(\filt(\mathbf{M}))$.\telos
\end{coro}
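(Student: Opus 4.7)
The plan is to assemble the four ingredients needed for a semilattice isomorphism: (i) the map $\phi:a\mapsto X_a$ lands in ${\tt KOF}(\mathfrak{X})$, (ii) it preserves finite meets and the top, (iii) it is injective, and (iv) it is surjective. Most of the technical work has already been discharged in Proposition~\ref{spectral prop}, so the proof should be short.

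First I would note that the codomain is correctly described: by Proposition~\ref{spectral prop}(2) we have $\{X_a\midsp a\in M\}={\tt KOF}(\mathfrak{X})$, which simultaneously establishes (i) and (iv). The map is into compact-open filters, and every compact-open filter arises in this form.

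Next, for the homomorphism properties, I would invoke the identity $X_a\cap X_b=X_{a\wedge b}$, which was already observed immediately before Proposition~\ref{spectral prop} and which holds because $a\wedge b\in x$ iff both $a\in x$ and $b\in x$, for $x$ a filter of $\mathbf{M}$. Preservation of the top reduces to the observation $X_1=\{x\in X\midsp 1\in x\}=X$, since $1\in x$ holds for every (nonempty) filter of $\mathbf{M}$, and $X$ is evidently the top of the compact-open filter poset.

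For injectivity, the natural witnesses are the principal filters $x_a=a{\uparrow}$ and $x_b=b{\uparrow}$. Assuming $X_a=X_b$, the containment $x_a\in X_a=X_b$ gives $b\in x_a$, hence $a\leq b$; symmetrically $b\leq a$, so $a=b$. I expect no real obstacle here; the only thing to be careful about is to make explicit that $\phi$ preserves the unit in addition to binary meets, which is what promotes it from a meet-semilattice homomorphism to an isomorphism of bounded meet-semilattices. Combining (i)--(iv) then yields the claimed isomorphism $\mathbf{M}\iso{\tt KOF}(\filt(\mathbf{M}))$.
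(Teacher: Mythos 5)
Your proof is correct and matches the paper's intent: the paper gives no explicit argument, declaring the corollary an immediate consequence of Proposition~\ref{spectral prop}, and your steps (i)--(iv) simply make that explicit using exactly the facts the paper has on hand, namely $\{X_a\midsp a\in M\}={\tt KOF}(\mathfrak{X})$ from Proposition~\ref{spectral prop}(2), the identity $X_a\cap X_b=X_{a\wedge b}$ with $X_1=X$ noted just before that proposition, and the principal filters $x_a$ for injectivity. No discrepancy to report.
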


\subsection{The Canonical Dual Space of a Lattice}
\label{lat rep section}
If $\mathbf{N}$ is a join semilattice with unit (bottom) element 0, then $\mathbf{N}^\partial$ (the opposite semilattice, order reversed) is a meet semilattice with unit (top) and $\filt(\mathbf{N}^\partial)=Y=\idl(\mathbf{N})$, the set of proper ideals of $\mathbf{N}$. The topology generated by the basis of sets $Y^a=\{y\in Y\midsp a\in y\}$, for $a\in N$, is a spectral topology by Proposition \ref{spectral prop}, observing that $Y^a\cap Y^b=Y^{a\vee b}$, which ensures that the basis $\mathcal{C}=\{Y^a\midsp a\in N\}$ is closed under finite intersections (with the empty intersection being $Y_0=Y$ itself).

For the lattice case, just as orthonegation is represented in \cite{goldb} by the binary relation ${\perp}\subseteq X\times X$ defined by $x\perp z$ iff $\exists a(a\in x\wedge a^\perp\in z)$, the identity trivial duality $\imath:\mathbf{L}\iso(\mathbf{L}^\partial)^\partial$ is similarly represented \cite{ sdl,duality2} by the sorted binary relation ${\upv}\subseteq X\times Y$ defined by $x\upv y$ iff $\exists a(a\in x\wedge\imath(a)\in y)$ iff $x\cap y\neq\emptyset$.

Note that the quasi-seriality condition set by axiom (F0) holds for the complement of the canonical Galois relation $\upv$.

As in \cite{ sdl}, we represent lattices and normal lattice expansions, more generally, in topologized sorted frames (polarities) $\mathfrak{F}=(X,\upv,Y,(R_\sigma)_{\sigma\in\tau})$, where for each normal lattice operator $f$ of distribution type $\sigma=(i_1,\ldots,i_n;i_{n+1})$ the frame is equipped with a sorted relation $R_\sigma$ of sort $\sigma=(i_{n+1};i_1\cdots i_n)$, i.e. $R_\sigma\subseteq Z_{i_{n+1}}\times\prod_{j=1}^{n}Z_{i_j}$ and where $Z_{i_j}=X$ when $i_j=1$ and $Z_{i_j}=Y$ when $i_j=\partial$.

\begin{prop}\rm
\label{compact-open stable prop}
  For a bounded lattice $\mathbf{L}$, the bases $\mathcal{B}=\{X_a\midsp a\in L\}$ and $\mathcal{C}=\{Y^a\midsp a\in L\}$ of the spaces $\mathfrak{X}=(X,\mathcal{B}), \mathfrak{Y}=(Y,\mathcal{C})$, where $X=\filt(\mathbf{L})$ and $Y=\idl(\mathbf{L})$, are the families of compact-open Galois stable and co-stable, respectively, sets. Furthermore, $\mathcal{B}$ and $\mathcal{C}$ are dually isomorphic bounded lattices, with $\mathcal{B}$ a sublattice of $\gpsi$ and $\mathcal{C}$ a sublattice of $\gphi$.
\end{prop}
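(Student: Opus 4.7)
My plan is to first verify that each basic set $X_a$ is Galois stable, by computing its Galois dual under the canonical relation $x\upv y$ iff $x\cap y\neq\emptyset$. A point $y\in Y$ lies in $X_a'$ iff every filter containing $a$ meets $y$; since the principal filter $x_a = a{\uparrow}$ is the smallest such filter and $y$ is downward closed, this is equivalent to $a\in y$, whence $X_a' = Y^a$. A symmetric argument yields $(Y^a)' = X_a$, so $X_a = X_a''\in\gpsi$ and $Y^a = (Y^a)''\in\gphi$. Combined with Proposition~\ref{spectral prop} (which already identifies the elements of $\mathcal{B}$ as the compact-open filters of $\mathfrak{X}$), this gives one direction: $\mathcal{B}$ and $\mathcal{C}$ are contained in the respective families of compact-open Galois (co)stable sets.

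The main conceptual point, which I expect to be the crux, is the converse. Let $A$ be a compact-open Galois stable subset of $X$. Since $\mathcal{B}$ is a basis of $\mathfrak{X}$, openness and compactness of $A$ yield a finite representation $A = X_{a_1}\cup\cdots\cup X_{a_n}$. The Galois map converts unions to intersections, so $A' = \bigcap_{i=1}^n X_{a_i}' = \bigcap_{i=1}^n Y^{a_i}$; and because an ideal is closed under finite joins of its elements, this intersection equals $Y^{a_1\vee\cdots\vee a_n}$. Applying stability of $A$ then collapses the union into a single basic open: $A = A'' = (Y^{a_1\vee\cdots\vee a_n})' = X_{a_1\vee\cdots\vee a_n}\in\mathcal{B}$. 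The symmetric argument handles the case of $\mathfrak{Y}$ and $\mathcal{C}$.

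It remains to record the sublattice and duality assertions, which are mostly bookkeeping. From $X_a\cap X_b = X_{a\wedge b}$ one gets closure of $\mathcal{B}$ under binary meets in $\gpsi$; the Galois-closure computation of the previous paragraph, specialized to $n=2$, gives $X_a\vee_{\gpsi}X_b = (X_a\cup X_b)'' = X_{a\vee b}$, so $\mathcal{B}$ is closed under binary joins as well; and the bounds $X = X_1$ and $\emptyset = X_0$ lie in $\mathcal{B}$ (the latter because no proper filter contains $0$). Hence $\mathcal{B}$ is a bounded sublattice of $\gpsi$ and $a\mapsto X_a$ is a bounded lattice isomorphism $\mathbf{L}\iso\mathcal{B}$, with injectivity following from $X_a\subseteq X_b$ iff $b\in x_a$ iff $a\leq b$. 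Symmetric calculations show that $a\mapsto Y^a$ is a bounded lattice isomorphism $\mathbf{L}^\partial\iso\mathcal{C}$ with $\mathcal{C}$ a bounded sublattice of $\gphi$. Finally, the restriction to $\mathcal{B}$ of the Galois connection $(\cdot)'\colon\gpsi\iso\gphi^\partial$ is precisely the bijection $X_a\leftrightarrow Y^a$, providing the claimed dual isomorphism of $\mathcal{B}$ and $\mathcal{C}$.
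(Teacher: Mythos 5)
Your proof is correct, and for the crux of the statement --- that every compact-open Galois stable set already lies in $\mathcal{B}$ --- it takes a genuinely different route from the paper's. The paper argues by point separation: given $x\notin A=A''$ it picks a witness ideal $y\in A'$ with $x\cap y=\emptyset$, covers $A$ by sets $X_{a_z}$ with $a_z\in z\cap y$, extracts a finite subcover by compactness, and joins the finitely many $a_{z_i}$ (all inside $y$) to produce a single $X_{a_x}$ containing $A$ but omitting $x$. You bypass the separation argument entirely: having first computed $(X_a)'=Y^a$ and $(Y^a)'=X_a$ (where the paper instead invokes $X_a=\Gamma x_a$ and Lemma~\ref{basic facts}), you write the compact open $A$ as a finite union $X_{a_1}\cup\cdots\cup X_{a_n}$ of basic opens and let the Galois closure do the work, $A=A''=\left(\bigcap_{i}Y^{a_i}\right)'=(Y^{a_1\vee\cdots\vee a_n})'=X_{a_1\vee\cdots\vee a_n}$. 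Your version is arguably cleaner, stays entirely inside the algebra of basic opens, and has the bonus that the same computation at $n=2$ delivers the join formula $X_a\vee_{\gpsi}X_b=X_{a\vee b}$ needed for the sublattice claim, which the paper only asserts in its closing sentence; the paper's version is the specialization of the argument of \cite[Lemma~2.7]{sdl}, which is stated for an arbitrary semilattice duality rather than the trivial one, and so fits its stated policy of importing that lemma. The only point worth flagging is cosmetic: your identification of $X_0=\emptyset$ as the bottom of $\gpsi$ tacitly uses that $\emptyset$ is stable, i.e.\ quasi-seriality of the canonical Galois relation (axiom (F0)), which indeed holds because every proper filter is disjoint from some proper principal ideal.
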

\begin{proof}
The two cases for  $\mathcal{B}$ and $\mathcal{C}$ are similar. The proof follows from \cite[Lemma~2.7]{sdl}. That lemma is phrased in terms of clopen sets in a Hausdorff space, which are then compact-open and compactness and (co)stability are the only properties needed and used in the argument. Other than that, the claim in \cite[Lemma~2.7]{sdl} is phrased in terms of an arbitrary duality $\ell:\mathbf{S}\iso\mathbf{K}^\partial:r$ between meet semilattices. The case for lattices follows by specializing the argument to the trivial duality $\ell=r=\imath:\mathbf{L}_\wedge\leftrightarrows(\mathbf{L}_\vee)^\partial$, which we do below.

First, stability of the sets $X_a, Y^a$ follows from Lemma \ref{basic facts}, observing that $X_a=\Gamma x_a$ and $Y^a=\Gamma y_a$. It remains to show that every stable compact-open subset $A$ of $X$ is of the form $X_a$, for some lattice element $a\in L$. 

Assume $A=A''$ is compact-open and let $x\not\in A= A''$. Let $y\in A'$ be an ideal such that $x\not\upv y$, i.e. $x\cap y=\emptyset$. By $y\in A'$ we have $A\upv y$, i.e. for any $z\in A$ we have $z\upv y$, i.e. $a_z\in z\cap y$, for some lattice element $a_z$. Thus $A\subseteq \bigcup_{z\in A}X_{a_z}$ and, by compactness, it follows that $A\subseteq X_{a_{z_1}}\cup\cdots\cup X_{a_{z_n}}$, for some $n$. Letting $a_x=a_{z_1}\vee\cdots\vee a_{z_n}$ it follows that for all $i=1,\ldots,n$, $X_{a_{z_i}}\subseteq X_{a_x}$, hence $A\subseteq X_{a_x}$. Notice that $a_x\not\in x$, since $a_x\in y$. Hence $x\not\in X_{a_x}$. This shows that the complement of $A$ is contained in the complement of $X_{a_x}$, $-A\subseteq -X_{a_x}$, and given we also obtained $A\subseteq X_{a_x}$ it follows that $A=X_{a_x}$.

That  both $\mathcal{B,C}$ are (dually isomorphic) lattices follows from the fact that $(X_a)'=Y^a$ and $(Y^a)'=X_a$. Joins in $\mathcal{B}, \mathcal{C}$ are defined by taking closures of unions: $A\vee C=(A\cup C)''$, as in $\gpsi$ and $\gphi$.
\end{proof}

Let ${\tt KO}\mathcal{G}(\mathfrak{X}),{\tt KO}\mathcal{G}(\mathfrak{Y})$ be the families of Galois compact-open subsets in $\mathfrak{X}$ and in $\mathfrak{Y}$, respectively. By Propositions \ref{spectral prop} and \ref{compact-open stable prop}, ${\tt KOF}(\mathfrak{X})={\tt KO}\mathcal{G}(\mathfrak{X})$ and ${\tt KOF}(\mathfrak{Y})={\tt KO}\mathcal{G}(\mathfrak{Y})$.

The following choice-free lattice representation theorem is an immediate consequence of our results so far in this article.

\begin{thm}[Choice-free Lattice Representation]\label{choice-free lat rep}
  \rm
  Let $\mathbf{L}=(L,\leq,\wedge,\vee,0,1)$ be a bounded lattice and $(X,\upv,Y)$ its dual filter-ideal frame ($X=\filt(\mathbf{L}),Y=\idl(\mathbf{L})$), with ${\upv}\subseteq X\times Y$ defined by $x\upv y$ iff $x\cap y\neq\emptyset$. Let $\mathfrak{X}=(X,\mathcal{B})$ and $\mathfrak{Y}=(Y,\mathcal{C})$ be the spectral spaces generated by the bases $\mathcal{B}=\{X_a\midsp a\in L\}$ and $\mathcal{C}=\{Y^a\midsp a\in L\}$, respectively.

  Then the map $a\mapsto X_a$ is a lattice isomorphism $\mathbf{L}\iso{\tt KO}\mathcal{G}(\mathfrak{X})$ and the map $a\mapsto Y^a$ is a dual isomorphism $\mathbf{L}^\partial\iso{\tt KO}\mathcal{G}(\mathfrak{Y})$.\telos
\end{thm}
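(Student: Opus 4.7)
The plan is to chain the semilattice representation of Corollary \ref{kof coro} with the identifications of compact-open and Galois sets in Propositions \ref{spectral prop} and \ref{compact-open stable prop}. Applying Corollary \ref{kof coro} to the meet-semilattice reduct $\mathbf{L}_\wedge=(L,\leq,\wedge,1)$ gives a meet-semilattice isomorphism $a\mapsto X_a$ from $\mathbf{L}_\wedge$ onto ${\tt KOF}(\mathfrak{X})$. By Proposition \ref{spectral prop}(2) we have ${\tt KOF}(\mathfrak{X})=\mathcal{B}$, and by Proposition \ref{compact-open stable prop} we have $\mathcal{B}={\tt KO}\mathcal{G}(\mathfrak{X})$ with the lattice structure inherited from $\gpsi$. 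So $a\mapsto X_a$ is already a meet-preserving bijection onto ${\tt KO}\mathcal{G}(\mathfrak{X})$, and the only remaining task is to verify that joins are preserved as well.

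The join in $\gpsi$ is $A\vee C=(A\cup C)''$, so I would verify $X_{a\vee b}=(X_a\cup X_b)''$. The inclusion $(X_a\cup X_b)''\subseteq X_{a\vee b}$ is immediate from $X_a\cup X_b\subseteq X_{a\vee b}$ (any filter containing $a$ or $b$ contains $a\vee b$) together with Galois stability of $X_{a\vee b}=\Gamma x_{a\vee b}$ by Lemma \ref{basic facts}(2). For the reverse inclusion I would unpack the Galois connection: $(X_a\cup X_b)\rperp=Y^a\cap Y^b$, and any $y$ in this intersection contains both $a$ and $b$, hence contains $a\vee b$ (as $y$ is an ideal), so any $x\ni a\vee b$ satisfies $x\upv y$, which yields $x\in{}\rperp((X_a\cup X_b)\rperp)=(X_a\cup X_b)''$.

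For the second isomorphism $\mathbf{L}^\partial\iso{\tt KO}\mathcal{G}(\mathfrak{Y})$ I would apply the same argument dually. The opposite of the join-semilattice reduct $\mathbf{L}_\vee$ is a meet-semilattice $(\mathbf{L}_\vee)^\partial$ whose proper filters are exactly the proper ideals of $\mathbf{L}$, i.e. $\filt((\mathbf{L}_\vee)^\partial)=Y$. Applying Corollary \ref{kof coro} to $(\mathbf{L}_\vee)^\partial$ yields a meet-semilattice isomorphism $a\mapsto Y^a$ onto ${\tt KOF}(\mathfrak{Y})=\mathcal{C}={\tt KO}\mathcal{G}(\mathfrak{Y})$, which reads as the sought dual lattice isomorphism once the analogous join-preservation step is carried out in $\gphi$ by the dual ``ideals contain $a\wedge b$ iff they contain both $a$ and $b$'' argument. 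No real obstacle is expected: all the substantive content — spectrality of both $\mathfrak{X}$ and $\mathfrak{Y}$, the identification ${\tt KOF}={\tt KO}\mathcal{G}$, and the dually isomorphic lattice structures on $\mathcal{B}$ and $\mathcal{C}$ — has already been established, so the theorem is essentially a bookkeeping consequence.
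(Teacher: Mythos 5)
Your proposal is correct and follows exactly the route the paper intends: the paper states the theorem as an immediate consequence of Corollary~\ref{kof coro}, Proposition~\ref{spectral prop} and Proposition~\ref{compact-open stable prop} (which already records that $\mathcal{B}$ is a sublattice of $\gpsi$ with $A\vee C=(A\cup C)''$), and your chaining of these results, plus the explicit check that $X_{a\vee b}=(X_a\cup X_b)''$ via $(X_a\cup X_b)\rperp=Y^a\cap Y^b$, is precisely the bookkeeping the paper leaves to the reader. The dual argument for $\mathbf{L}^\partial\iso{\tt KO}\mathcal{G}(\mathfrak{Y})$ is likewise as intended.
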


The representation of (semi)lattices detailed above is essentially the same as that in \cite{sdl}, by this author and Dunn, the difference lying in the choice of the topology imposed on the filter space.
The lattice $\gpsi$ of Galois stable sets is a canonical extension of the lattice, see \cite[Proposition~2.6]{mai-harding}, which is unique up to an isomorphism that commutes with the lattice embedding, by \cite[Proposition~2.7]{mai-harding}.

Moshier and Jipsen \cite{Moshier2014a} provide a topological construction of the canonical extension of a lattice.
Proposition \ref{compact-open stable prop}, together with uniqueness of canonical extensions up to isomorphism, entails  that the filter space of a lattice is what Moshier and Jipsen call a ${\tt BL}$-space, defined by a number of equivalent conditions in \cite[Theorem~3.2]{Moshier2014a}. It can be easily verified that the canonical extension ${\tt FSat}(X)$ defined in \cite{Moshier2014a} is literally identical to $\gpsi$. We substantiate this claim below. Recall first that ${\tt OF}(X)$ designates in \cite{Moshier2014a} the family of open filters of $X=\filt(\mathbf{L})$ and that the closure operator {\tt fsat} is defined by ${\tt fsat}(U)=\bigcap\{F\in{\tt OF}(X)\midsp U\subseteq F\}$.

\begin{prop}\label{fsat prop}
  \rm
  The following hold:
\begin{enumerate}
\item  A Galois stable set $A=A''$ is a filter of $X=\filt(\mathbf{L})$ and, similarly, a Galois co-stable set $B=B''$ is a filter of $Y=\idl(\mathbf{L})$.
\item Every open filter $F\in{\tt OF}(X)$ is of the form ${}\rperp\{y\}$ for an ideal $y\in Y=\idl(\mathbf{L})$, which is unique by separation of the frame (see Lemma \ref{basic facts}).
\item For any subset $U\subseteq X$, ${\tt fsat}(U)=U''$. Therefore, ${\tt FSat}(X)=\gpsi$.
\end{enumerate}
\end{prop}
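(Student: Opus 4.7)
\textbf{Proof Plan for Proposition \ref{fsat prop}.} For part (1), I would first use Lemma \ref{basic facts}(3) to observe that a Galois set is automatically $\sqleq$-upward closed (since $\sqleq$ on $X$ coincides with filter inclusion, as argued after Proposition \ref{spectral prop}). The real content is directedness. Given $x,z \in A = A''$, I propose writing $A = \bigcap_{y \in A'} {}\rperp\{y\}$ and showing that the filter intersection $x \cap z$, which is the $\sqleq$-meet of $x$ and $z$ in $X$, still lies in $A$. The key calculation: for each $y \in A'$, pick $a \in x \cap y$ and $b \in z \cap y$; then $a \vee b \in y$ (ideal closure) and $a \vee b \in x$ and $a \vee b \in z$ (upward closure of filters), so $a \vee b \in (x\cap z) \cap y$, whence $(x\cap z) \upv y$. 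The argument for co-stable sets in $Y$ is dual.

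For part (2), given $F \in {\tt OF}(X)$, I would define $y_F = \{a \in L : X_a \subseteq F\}$ and show it is a proper ideal of $\mathbf{L}$. Downward closure is immediate because $b \leq a$ implies $X_b \subseteq X_a$. The nontrivial step is closure under binary joins, which is exactly where the filter property of $F$ enters. If $a,b \in y_F$, I want $X_{a \vee b} \subseteq F$: taking any $x \in X_{a \vee b}$, the principal filters $x_a, x_b$ both lie in $F$ (since $x_a \in X_a \subseteq F$, and similarly for $b$); since $F$ is a filter in $X$, the meet $x_a \cap x_b$ belongs to $F$; and one checks directly that $x_a \cap x_b = x_{a \vee b} \sqleq x$, so $x \in F$ by upward closure. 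Then I would verify $F = {}\rperp\{y_F\}$ by a routine computation: $(\subseteq)$ uses that $F$ open means $F = \bigcup_{b \in E} X_b$, and any $x \in F$ lies in some $X_b \subseteq F$, giving $b \in x \cap y_F$; $(\supseteq)$ uses that $a \in x \cap y_F$ gives $x_a \subseteq x$ with $x_a \in X_a \subseteq F$, so $x \in F$. Uniqueness follows from separation (Lemma \ref{basic facts}) applied on the ideal side, which I would verify once for the canonical frame by noting $x \sqleq z \Llra x \subseteq z$, via the principal ideals $y_a$ for $a \in x$.

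For part (3), I would simply combine part (2) with the definition of ${\tt fsat}$. By part (2), the open filters of $X$ are exactly the sets ${}\rperp\{y\}$ for $y \in Y$. Containment $U \subseteq {}\rperp\{y\}$ is equivalent to $U \upv y$, i.e.\ $y \in U'$. Therefore
\[
{\tt fsat}(U) = \bigcap\{F \in {\tt OF}(X) : U \subseteq F\} = \bigcap_{y \in U'} {}\rperp\{y\} = {}\rperp(U') = U'',
\]
so ${\tt FSat}(X) = \{U'' : U \subseteq X\} = \gpsi$.

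The main obstacle will be the closure of $y_F$ under binary joins in part (2); this is the one place where the topological filter condition on $F$ is genuinely used, and it hinges on the algebraic identity $x_a \cap x_b = x_{a \vee b}$ for principal filters, together with the fact that $x_a \cap x_b$ is precisely the meet of $x_a$ and $x_b$ in the specialization order. Everything else reduces to unpacking the Galois connection $(\;)\rperp, {}\rperp(\;)$ generated by the incidence relation $x \upv y \Llra x \cap y \neq \emptyset$ of the canonical frame.
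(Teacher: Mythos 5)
Your plan is correct and follows essentially the same route as the paper's proof: part (1) is the paper's directedness argument (pick $a\in x\cap y$, $b\in z\cap y$, pass to $a\vee b$) run directly rather than by contradiction, and part (2) is the paper's construction with the ideal described intrinsically as $y_F=\{a\midsp X_a\subseteq F\}$ instead of as the ideal generated by the index set of a basic open cover of $F$ (these coincide), with the same key steps ($x_a\cap x_b=x_{a\vee b}$ and the filter property of $F$). The only point to make explicit is that your part (3) computation uses that the open filters containing $U$ are \emph{exactly} the sets ${}\rperp\{y\}$ with $y\in U'$, so you also need the converse of part (2) --- that every ${}\rperp\{y\}$ is an open filter --- which the paper proves separately (it is a filter by stability and part (1), and open because $a\in x\cap y$ gives $x\in X_a\subseteq{}\rperp\{y\}$).
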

\begin{proof}
  For part (1), the proof is straightforward and we only discuss the case of stable sets. First, Galois sets are upsets, by Lemma \ref{basic facts}. Now let $x,z\in A=A''$ and suppose that $x\cap z\not\in A''$. Then there exists an ideal $y\in A'$ such that $(x\cap z)\not\upv y$, i.e. $(x\cap z)\cap y=\emptyset$. Since $x,z\in A''$, we have $x\cap y\neq\emptyset$ and $z\cap y\neq\emptyset$, hence there exist lattice elements $a,b$ such that $a\in x\cap y$ and $b\in z\cap y$. This implies that both $a,b\in y$, hence $a\vee b\in y$. But $x,z$ are filters, hence $a\vee b\in x\cap z$, which contradicts the assumption that $x\cap z\not\in A''$.

  For part (2), for any $y\in Y$, ${}\rperp\{y\}$ is Galois stable, hence a filter of $X$, by part (1). To see that it is an open set, let $x\in{}\rperp\{y\}$, i.e. $x\upv y$, so that $a\in x\cap y\neq\emptyset$. Thus $x\in X_a$. Since $a\in y$, $X_a\upv y$, so that we obtain $x\in X_a\subseteq{}\rperp\{y\}$. Thus ${}\rperp\{y\}\in{\tt OF}(X)$, for any $y\in Y$.

  Conversely, let $F\in{\tt OF}(X)$ and let $E\subseteq L$ be such that $F=\bigcup_{a\in E}X_a$. Let $y$ be the ideal generated by $E$. Thus $e\in y$ iff there exist $e_1,\ldots,e_n\in E$, for some $n$, such that $e\leq e_1\vee\cdots\vee e_n$. We show that $F={}\rperp\{y\}$, for this $y$.

  If $x\in F=\bigcup_{a\in E}X_a$, then $x\in X_a$, for some $a\in E$. By definition of $y$, we get $a\in y$, so $x\upv y$, i.e. $x\in{}\rperp\{y\}$. Hence $F\subseteq{}\rperp\{y\}$. Conversely, let $x\upv y$ so that $e\in x\cap y$. Then $e\leq e_1\vee\cdots\vee e_n$, where $\{e_1,\ldots,e_n\}\subseteq E$. It follows that $x_{e_1}\cap\cdots\cap x_{e_n}=x_{e_1\vee\cdots\vee e_n}\subseteq x_e\subseteq x$. Since $e_i\in E$, we have $X_{e_i}\subseteq F$. Because $x_{e_i}\in X_{e_i}$, all principal filters $x_{e_1},\ldots,x_{e_n}\in F$. Since $F$ is a filter, their intersection is in $F$ and then also $x\in F$. Hence ${}\rperp\{y\}\subseteq F$.

  For part (3), by Lemma \ref{basic facts} the set of open elements ${}\rperp\{y\}$ is meet-dense in $\gpsi$. By part (2) above, ${\tt OF}(X)=\{{}\rperp\{y\}\midsp y\in Y\}$. Using also the definition of $F$-saturation in \cite{Moshier2014a} we obtain
  \[
  {\tt fsat}(U)=\bigcap\{F\in{\tt OF}(X)\midsp U\subseteq F\}=\bigcap\{{}\rperp\{y\}\midsp U\upv y\}=U''
  \]
  and so ${\tt FSat}(X)=\{A\subseteq X\midsp A={\tt fsat}(A)\}=\{A\subseteq X\midsp A=A''\}=\gpsi$.
\end{proof}

\subsection{Representing Normal Lattice Operators}
\label{normal ops rep section}
Let $\mathbf{L}=(L,\leq,\wedge,\vee,0,1,f)$ be a bounded lattice with a normal operator $f$. Then $f$ extends to a completely normal operator $F$, of the same distribution type as $f$, on the canonical extension $\mathcal{G}(\filt(\mathbf{L}))$ of $\mathbf{L}$.

For the proof, we refer the reader to \cite[Sections~3.1,4.1,4.2]{duality2}. The representation of the operator is the same as that given in \cite{sdl-exp}, the difference lying in the axiomatization of the dual frame of the lattice expansion, in particular on the axioms for the relations corresponding to the lattice operator. Particular instances of the representation were also given in \cite{pnsds} and, recently, in the choice-free duality for implicative lattices and Heyting algebras \cite{choiceFreeHA}.

To keep this article as self-contained as possible, we sketch the representation steps, drawing on \cite[Section~4.1]{duality2}.

The underlying polarity $\mathfrak{F}=(\filt(\mathcal{L}),\upv,\idl(\mathcal{L}))$ of the canonical frame consists of the sets $X=\filt(\mathcal{L})$ of proper filters and $Y=\idl(\mathcal{L})$ of proper ideals of the lattice and the relation ${\upv}\subseteq\filt(\mathcal{L})\times\idl(\mathcal{L})$, defined by $x\upv y$ iff $x\cap y\neq\emptyset$, while the representation map $\zeta_1$ sends a lattice element $a\in L$ to the set of proper filters that contain it, 
\[
\zeta_1(a)=X_a=\{x\in X\midsp a\in x\}=\{x\in X\midsp x_a\subseteq x\}=\Gamma x_a.
\]
Similarly, a co-represenation map $\zeta_\partial$ is defined by 
\[
\zeta_\partial(a)=\{y\in Y\midsp a\in y\}=\{y\in Y\midsp y_a\subseteq y\}=\Gamma y_a=Y^a.
\]
For each normal lattice operator of distribution type  $\delta=(i_1,\ldots,i_n;i_{n+1})$ a relation is defined, such that  $\sigma=(i_{n+1};i_1\cdots i_n)$ is the sort type of the relation. Without loss of generality, we may consider just two normal operators $f$, of output type $1$, and $h$, of output type $\partial$. We define two corresponding relations $R,S$ of respective sort types $\sigma(R)=(1;i_1\cdots i_n)$ and $\sigma(S)=(\partial;t_1\cdots t_n)$, where for each $j$, $i_j$ and $t_j$ are in $\{1,\partial\}$. In other words
$R\subseteq X\times\prod_{j=1}^{j=n}Z_{i_j}$, while \mbox{$S\subseteq Y\times \prod_{j=1}^{j=n}Z_{t_j}$.}

To define the relations, we use the point operators introduced in \cite{dloa} (see also \cite{sdl-exp}). In the generic case we examine, we need to define two sorted operators
\[
\widehat{f}:\prod_{j=1}^{j=n}Z_{i_j}\lra Z_1\hskip0.8cm
\widehat{h}: \prod_{j=1}^{j=n}Z_{t_j}\lra Z_\partial\hskip0.8cm(\mathrm{recall\ that\ }Z_1=X, Z_\partial=Y).
\]
Assuming for the moment that the point operators have been defined, the canonical relations $R,S$ are defined by \eqref{canonical relations defn}, where $\vec{u}=u_1\cdots u_n$ and similarly for $\vec{v}$,
\begin{align}\begin{split}
xR\vec{u} &\;\; \mbox{ iff }\;\; \widehat{f}(\vec{u})\subseteq x\;\; (\mbox{for }\; x\in X=\filt(\mathbf{L})\;\mbox{ and }\;\vec{u}\in \prod_{j=1}^{j=n}Z_{i_j}),\\
yS\vec{v} &\;\;\mbox{ iff }\;\; \widehat{h}(\vec{v})\subseteq y\;\; (\mbox{for }\; y\in Y=\idl(\mathbf{L})\;\mbox{ and }\;\vec{v}\in \prod_{j=1}^{j=n}Z_{t_j}).\label{canonical relations defn}
\end{split}
\end{align}
Returning to the point operators and letting $x_e,y_e$ be the principal filter and principal ideal, respectively, generated by a lattice element $e$, these are uniformly defined as follows, for $\vec{u}\in \prod_{j=1}^{j=n}Z_{i_j}$ and $\vec{v}\in \prod_{j=1}^{j=n}Z_{t_j}$, where $\vec{a}\in\vec{u}$ abbreviates the conjunction of membership statements $\bigwedge_{i=1}^n(a_i\in u_i)$,
\begin{equation}
  \widehat{f}(\vec{u}) \;=\; \bigvee\{x_{f(\vec{a})}\midsp\vec{a}\in\vec{u}\}\hskip1.5cm
  \widehat{h}(\vec{v}) \;=\; \bigvee\{y_{h(\vec{a})}\midsp\vec{a}\in\vec{v}\}.\label{canonical point operators defn}
\end{equation}
In other words, $\widehat{f}(\vec{u})$ is the filter generated by the set $\{f(\vec{a})\midsp \vec{a}\in\vec{u}\}$. Similarly $\widehat{h}(\vec{v})$ is the ideal generated by the set $\{h(\vec{a})\midsp \vec{a}\in\vec{v}\}$.

Though differently defined, the Moshier-Jipsen representation of quasioperators \cite{Moshier2014b} as strongly continuous and finite meet preserving point operators in the dual space is identical to the representation maps $\widehat{f},\widehat{h}$ defined above, as we detailed in \cite[Remark~4.2, Remark~4.8]{duality2}.

In the next section we instantiate definitions to the case of  quasi complemented lattices and we draw on \cite{duality2} to establish properties of the canonical frame.

\section{Representing Quasi-Complemented Lattices}
\label{rep quasi comp section}
In this section we extend the lattice representation of section \ref{lat rep section} to the case of a lattice with an additional quasi-complementation operator, assuming the axiomatization of at least the minimal system of Figure \ref{quasi-complements figure} and specializing the constructions of Section \ref{normal ops rep section}.

The canonical dual frame is the structure $(X,\upv,Y,S_\vee)$, where $X=\filt(\mathbf{L})$, $Y=\idl(\mathbf{L})$, ${\upv}\subseteq X\times Y$ is defined by $x\upv y$ iff $x\cap y\neq\emptyset$ and $S_\vee\subseteq Y\times X$ is the canonical relation defined using the point operator $\widehat{\nu}:X\lra Y$, according to \cite[Equation~(4.1), Equation~(4.2)]{duality2}. Instantiating to the case at hand, the definitions are given by equation \eqref{neg point op and rel}
\begin{equation}\label{neg point op and rel}
  \widehat{\nu}(x)=\bigvee\{y_{\nu a}\midsp a\in x\}\hskip1cm yS_\vee x\mbox{ iff }\widehat{\nu}(x)\subseteq y\hskip1cm (x\in X, y\in Y).
\end{equation}
where recall that $y_e$ designates the principal ideal generated by the lattice element $e$.

It follows from the above definitions that the canonical relation $S_\vee\subseteq Y\times X$ is equivalently defined by  condition \eqref{canonical-Snu}
\begin{equation}\label{canonical-Snu}
yS_\vee x\;\mbox{ iff }\; \forall a\in L(a\in x\lra\nu a\in y).
\end{equation}
Similarly we obtain from definitions that its Galois dual relation $S'_\vee\subseteq X\times X$ is defined by
\begin{equation}\label{galois dual defn}
zS'_\vee x\mbox{ iff }\forall y\in Y(yS_\vee x\lra z\upv y)\mbox{ iff }z\upv\widehat{\nu}(x)\mbox{ iff }\exists a\in L(a\in x\mbox{ and }\nu a\in z).
\end{equation}
Proof details are straightforward and they are instances of the proofs for the general case of arbitrary normal lattice operators in \cite[Lemma~4.4, Lemma~4.5]{duality2}).

Recall that $S_\vee x=\Gamma(\widehat{\nu}(x))$ so that $S'_\vee x=\{\widehat{\nu}(x)\}'={}\rperp\{\widehat{\nu}(x)\}$.

A sorted image operator $\eta_S:\powerset(X)\lra\powerset(Y)$ is generated by the relation $S_\vee$, defined  by equation \eqref{sorted image op}. The closure of the restriction of $\eta_S$ to stable sets is designated by $\overline{\eta}_S$, defined by equation \eqref{closure of sorted def}.

\begin{prop}\rm
\label{canonical section stability Snu}
In the canonical lattice frame all axioms of Table \ref{quasiframe axioms table} hold. In particular, both sections of the Galois dual relation $S'_\vee$ of the canonical relation $S_\vee$ are Galois sets.
\end{prop}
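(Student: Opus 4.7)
The plan is to verify axioms (F0)--(F4) in turn, treating (F0)--(F3) as routine consequences of the canonical construction and concentrating the real work on (F4), as the statement indicates.

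For (F0), the required witnesses are immediate: for any proper filter $x\in X$, the principal ideal $y_0=\{0\}$ satisfies $x\cap y_0=\emptyset$ since $0\notin x$, and dually, for any proper ideal $y\in Y$, the principal filter $x_1=\{1\}$ satisfies $x_1\cap y=\emptyset$. For (F1), I would first check that the preorder $\preceq$ on $X$ induced by $\upv$ coincides with filter inclusion (and dually on $Y$): one direction is trivial, and for the other, $a\in x$ iff $y_a\in\{x\}'$, so $\{x\}'\subseteq\{z\}'$ forces $x\subseteq z$; antisymmetry of $\subseteq$ then makes $\preceq$ a partial order. Axiom (F2) is an immediate reading of \eqref{neg point op and rel}: since $yS_\vee x$ iff $\widehat{\nu}(x)\subseteq y$, the section $S_\vee x$ equals $\Gamma\widehat{\nu}(x)$, a closed element of $\gphi$ (cf.\ Lemma~\ref{s-nu-gamma}). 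For (F3), the pointwise definition of $\widehat{\nu}$ is visibly monotone in $x$ with respect to filter inclusion, so $z\subseteq x$ and $\widehat{\nu}(x)\subseteq y$ yield $\widehat{\nu}(z)\subseteq y$; hence $yS_\vee$ is a downset.

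The heart of the argument is (F4). One of the two sections is handled at once: from \eqref{galois dual defn}, $S'_\vee x={}\rperp\{\widehat{\nu}(x)\}$, an open element of $\gpsi$, in particular Galois stable. For the other section, fix $z\in X$ and set $W_z=\{x\in X\midsp zS'_\vee x\}$. Using the characterisation $zS'_\vee x$ iff $\exists a\in L(a\in x\wedge\nu a\in z)$ from \eqref{galois dual defn}, one unfolds
\[
W_z=\bigcup_{a\in I_z}X_a=\{x\in X\midsp x\cap I_z\neq\emptyset\},\qquad I_z:=\{a\in L\midsp\nu a\in z\}.
\]
The key step, and the only place the quasi-complementation axioms really enter, is to argue that $I_z$ is always an ideal of $\mathbf{L}$: antitonicity of $\nu$ together with $z$ being an upset makes $I_z$ a downset, while the identity $\nu(a\vee b)=\nu a\wedge\nu b$ together with $z$ being closed under binary meets makes $I_z$ closed under binary joins (normality $\nu 0=1$ and $1\in z$ give $0\in I_z$). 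If $I_z$ is a proper ideal, then $I_z\in Y$ and $W_z={}\rperp\{I_z\}$ is open, hence stable; otherwise $I_z=L$, in which case $1\in I_z$ yields $W_z=X$, the top of $\gpsi$. Either way $W_z$ is Galois stable, completing (F4). The main obstacle is precisely this ideal property of $I_z$; once it is in place everything else reduces to the canonical definitions and the Galois machinery already developed.
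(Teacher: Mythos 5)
Your verification is correct, and it takes a genuinely different (more self-contained) route than the paper: the paper's own proof of this proposition consists almost entirely of citations --- (F1)--(F3) to \cite[Lemma~4.3]{duality2}, (F4) to \cite[Lemma~25]{kata2z} and \cite[Lemma~4.6]{duality2} --- with only (F0) argued inline (via $x\cap y_a=\emptyset$ for $a\notin x$, of which your $y_0$/$x_1$ witnesses are a special case). What you supply directly is exactly the content of those cited lemmas, specialized to $\nu$: the first section $S'_\vee x={}\rperp\{\widehat{\nu}(x)\}$ is an open element hence stable, and the second section $zS'_\vee$ equals ${}\rperp\{I_z\}$ where $I_z=\{a\in L\midsp\nu a\in z\}$, with the ideal property of $I_z$ resting precisely on antitonicity, $\nu 0=1$ and $\nu(a\vee b)=\nu a\wedge\nu b$ --- this is the specialization to distribution type $(1;\partial)$ of the general fact that normality of the operator makes the relevant inverse-image sets filters/ideals. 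Your argument has the advantage of making visible where each axiom of $\mathbb{M}$ is used, at the cost of not generalizing automatically to other distribution types. One caveat, shared equally with the paper's formulation of (F2): the claim that $S_\vee x=\Gamma\widehat{\nu}(x)$ is a \emph{closed element} of $\gphi$ tacitly assumes $\widehat{\nu}(x)$ is a proper ideal, i.e.\ that $\nu a\neq 1$ for all $a\in x$; in the bare variety $\mathbb{M}$ this can fail (e.g.\ $\nu\equiv 1$), in which case $S_\vee x=\emptyset$, which is Galois co-stable by (F0) but not of the form $\Gamma v$. You handle the analogous degeneracy correctly for $I_z$ (the case $I_z=L$ giving $W_z=X$), so it would be natural to flag the symmetric issue for $\widehat{\nu}(x)$ as well.
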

\begin{proof}
The proof for axioms (F1)--(F3) is given in \cite[Lemma~4.3]{duality2} for the general case of arbitrary normal lattice operators. For axiom (F4),
the claim was first stated as \cite[Lemma~25]{kata2z} and a proof of one of the subcases was detailed, the other one being sufficiently similar. The omitted proof of the other subcase was provided in \cite[Lemma~4.6]{duality2}. Axiom (F0) obviously holds in the canonical frame since every proper filter $x$ does not intersect the principal ideal $y_a$, for any $a\not\in x$. Similarly for ideals.
\end{proof}

By \cite[Theorem~3.12]{duality2}, given also Proposition \ref{canonical section stability Snu}, $\overline{\eta}_S$ distributes over arbitrary joins of stable sets in $\gpsi$, returning a join of co-stable sets in $\gphi$.

The canonical extension $\overline{\eta}_\vee:\gpsi\lra\gpsi$ of the normal lattice operator $\nu$ is obtained by composing $\overline{\eta}_S$ with the Galois connection, setting $\overline{\eta}_\vee(A)=(\overline{\eta}_S(A))'\in\gpsi$, hence $\overline{\eta}_\vee(A)=\bigcap_{x\in A}S'_\vee x$, where $S'_\vee$ is the Galois dual relation  of $S_\vee$.  Given \eqref{closure of sorted def}, we obtain $\overline{\eta}_\vee(A)=\bigcap_{x\in A}{}\rperp\{\widehat{\nu}(x)\}$.

In the terminology of \cite{mai-harding}  $\overline{\eta}_\vee=\nu^\pi$  is the $\pi$-extension of the lattice operator $\nu$, defined by
\begin{equation}\label{pi of nu}
\nu^\pi(A)=\bigcap_{x\in A}{}\rperp\{\widehat{\nu}(x)\}=\bigcap_{x\in A}S'_\vee x=\overline{\eta}_\vee(A)={}\rperp(\overline{\eta}_S(A))={}\rperp(\eta_S(A)).
\end{equation}
 That $\overline{\eta}_\vee$  is indeed the $\pi$-extension $\nu^\pi$ of the quasi-complementation operator $\nu$ follows from the general proof in  \cite[Section~3.3]{kata2z} that
the operator defined by suitable composition with the Galois connection (to obtain a single-sorted operator on $\gpsi$) of the closure $\overline{\alpha}_R$ of the restriction to Galois sets of the image operator $\alpha_R$ generated by a canonical frame relation $R$ as in equation~\eqref{canonical relations defn}, is indeed the canonical extensions ($\sigma$, or $\pi$-extension, according to its output type) of the respective lattice operator.

As in Section~\ref{section: frames for quasi comp lats}, Definition~\ref{image-op-and-bot}, we define the incompatibility relation ${\perp}\subseteq X\times X$ to be the Galois dual relation  $S'_\vee$ of the canonical frame relation $S_\vee$ and we let $A^*=\{x\midsp x\perp A\}=\{x\in X\midsp\forall z(z\in A\lra x\perp z)\}$. By Lemma~\ref{ast-is-eta-nu}, $A^*=\nu^\pi(A)=\overline{\eta}_\vee(A)$.

\begin{lemma}\label{restriction to clopens}\rm
If $\mathbf{L}$ is a quasi-complemented lattice, then the lattice isomorphism $\mathbf{L}\iso{\tt KO}\mathcal{G}(\filt(\mathbf{L}))$ is in fact an isomorphism of quasi-complemented lattices.
\end{lemma}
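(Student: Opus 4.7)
The plan is to unwind the definitions and verify that the lattice isomorphism $\mathbf{L}\iso{\tt KO}\mathcal{G}(\filt(\mathbf{L}))$ given by $a\mapsto X_a$ commutes with the quasi-complementation operators, i.e. that $(X_a)^{*}=X_{\nu a}$ for every $a\in L$. Since by Theorem~\ref{choice-free lat rep} we already have a bounded lattice isomorphism, establishing this single identity closes the proof.

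The key computation is that $\widehat{\nu}(x_a)=y_{\nu a}$ for the principal filter $x_a$. By equation~\eqref{neg point op and rel},
\[
\widehat{\nu}(x_a)=\bigvee\{y_{\nu b}\midsp b\in x_a\}=\bigvee\{y_{\nu b}\midsp a\leq b\}.
\]
By antitonicity of $\nu$, for every $b$ with $a\leq b$ we have $\nu b\leq\nu a$, hence $y_{\nu b}\subseteq y_{\nu a}$, and taking $b=a$ shows the join equals $y_{\nu a}$. Combining with Lemma~\ref{s-nu-gamma}, which gives $\overline{\eta}_S(\Gamma x)=\Gamma\widehat{\nu}(x)$, we obtain $\overline{\eta}_S(X_a)=\overline{\eta}_S(\Gamma x_a)=\Gamma y_{\nu a}=Y^{\nu a}$.

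Passing to the Galois dual via $\overline{\eta}_\vee(A)=(\overline{\eta}_S(A))'={}\rperp(\overline{\eta}_S(A))$ and using Lemma~\ref{ast-is-eta-nu} (that $A^{*}=\overline{\eta}_\vee(A)$), we conclude
\[
(X_a)^{*}={}\rperp(Y^{\nu a})={}\rperp\{y_{\nu a}\}=\{x\in X\midsp x\cap y_{\nu a}\neq\emptyset\}=X_{\nu a},
\]
where the last equality uses that $x\cap y_{\nu a}\neq\emptyset$ iff some $c\leq\nu a$ lies in $x$, which (since $x$ is an upset) is equivalent to $\nu a\in x$. Recall also that ${}\rperp\{y_{\nu a}\}={}\rperp(\Gamma y_{\nu a})$ by Lemma~\ref{basic facts}(2), justifying the second equality.

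The only real content is the identity $\widehat{\nu}(x_a)=y_{\nu a}$; everything else is bookkeeping from the definitions of $\overline{\eta}_S$, $\overline{\eta}_\vee$ and the Galois connection. There is no genuine obstacle, since Proposition~\ref{canonical section stability Snu} guarantees that the canonical frame satisfies all axioms of Table~\ref{quasiframe axioms table}, so all tools (Lemmas~\ref{s-nu-gamma}, \ref{ast-is-eta-nu} and \ref{basic facts}) are available. Mild care is needed only to ensure that the restriction of the $\pi$-extension $\nu^\pi=\overline{\eta}_\vee$ to the sublattice of compact-open Galois sets actually lands back in that sublattice; this is automatic here because the image of $X_a$ under $(\;)^{*}$ is $X_{\nu a}$, which is compact-open by Proposition~\ref{compact-open stable prop}.
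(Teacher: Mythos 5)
Your proof is correct and follows essentially the same route as the paper's: both reduce the claim to the single identity $(X_a)^*=X_{\nu a}$, compute $\widehat{\nu}(x_a)=y_{\nu a}$ from equation~\eqref{neg point op and rel}, and then chain Lemmas~\ref{ast-is-eta-nu} and~\ref{s-nu-gamma} with the Galois connection to conclude. The only difference is cosmetic ordering of the steps, plus your (welcome) explicit remark that $X_{\nu a}$ is again compact-open.
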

\begin{proof}
It suffices to show that the representation function, mapping a lattice element $a$ to the set $X_a=\{x\in\filt(\mathbf{L})\midsp a\in x\}=\Gamma x_a$ is a homomorphism of quasi-complemented lattices, i.e. that $(X_a)^*=X_{\nu a}$.
\begin{tabbing}
\hskip4mm\= $(X_a)^*$\hskip4mm\==\hskip2mm\= $(\Gamma x_a)^*=\overline{\eta}_\vee(\Gamma x_a)$\hskip2.5cm\= by Lemma~\ref{ast-is-eta-nu}\\
\>\>=\> $(\overline{\eta}_S(\Gamma x_a))'$\> by definition of $\overline{\eta}_\vee$\\
\>\>=\> $S'x_a$ \> by Lemma~\ref{s-nu-gamma}\\
\>\>=\> ${}\rperp\{\widehat{\nu}(x_a)\}$\> by the same Lemma.
\end{tabbing}
It follows from the definition of $\widehat{\nu}$ in equation \eqref{neg point op and rel} that for a principal filter $x_a$, the ideal $\widehat{\nu}(x_a)=\bigvee\{y_{\nu e}\midsp a\leq e\}$ is the principal ideal $y_{\nu a}$. Furthermore, for any principal ideal $y_c$ and filter $x$, $x\upv y_c$ iff $c\in x$. Thereby
\begin{tabbing}
\hskip4mm\= $(X_a)^*$\hskip4mm\==\hskip2mm\=${}\rperp\{\widehat{\nu}(x_a)\}$\\
\>\>=\> ${}\rperp\{y_{\nu a}\}=\{x\in X\midsp \nu a\in x\}$\\
\>\>=\> $X_{\nu a}$
\end{tabbing}
which proves the claim of the lemma.
\end{proof}

\begin{prop}\label{canonical extension prop}\rm
Let $\mathbf{L}$ be a quasi-complemented lattice and $\mathbf{L}_+=\mathfrak{F}$ be its canonical frame. Then the full complex algebra $\mathfrak{F}^+=(\gpsi,\subseteq,\bigcap,\bigvee,\emptyset,X,\nu^\pi)$, where we define $\nu^\pi(A)={}\rperp\eta_S(A)$, is a canonical extension of $\mathbf{L}$ with embedding map $h(a)=X_a=\{x\in X\midsp a\in x\}$.
\end{prop}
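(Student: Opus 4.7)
The plan is to decompose the claim into its lattice part and its operator part, and to invoke results already in place for each.

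First, the underlying lattice reduct of $\mathfrak{F}^+$, namely $(\mathcal{G}(X),\subseteq,\bigcap,\bigvee,\emptyset,X)$, is a canonical extension of $\mathbf{L}$ with embedding $h(a)=X_a$. This part is not really to be proved here but cited: by Theorem~\ref{choice-free lat rep} the map $h$ is a lattice embedding of $\mathbf{L}$ into $\mathcal{G}(X)$, and by Proposition~\ref{compact-open stable prop} its image ${\tt KO}\mathcal{G}(\mathfrak{X})$ is precisely the set of clopen stable sets, i.e.\ the principal upsets $\Gamma x_a$ generated by the filter and ideal elements; this is exactly the configuration identified by Gehrke and Harding \cite{mai-harding} as a canonical extension, and uniqueness up to isomorphism (commuting with $h$) was invoked earlier in the paper.

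Second, one must check that $h$ is a homomorphism with respect to the quasi-complementation operator, i.e.\ $h(\nu a)=\nu^\pi(h(a))$ for all $a\in L$. This is exactly the content of Lemma~\ref{restriction to clopens}, where the computation $(X_a)^*=X_{\nu a}$ is carried out using $\widehat{\nu}(x_a)=y_{\nu a}$ and the definition of $\nu^\pi=\overline{\eta}_\vee$ via equation~\eqref{pi of nu}. So $h$ embeds the quasi-complemented lattice $\mathbf{L}$ into $\mathfrak{F}^+$ as an algebra of similarity type $\{(1;\partial)\}$.

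Third, one must verify that $\nu^\pi$ is indeed the canonical (here $\pi$-) extension of $\nu$ in the sense of Gehrke--Harding. Since $\nu$ has output type $\partial$ (it turns joins into meets), the appropriate canonical extension is the $\pi$-extension. The formula~\eqref{pi of nu}, $\nu^\pi(A)=\bigcap_{x\in A}{}\rperp\{\widehat{\nu}(x)\}$, agrees with the general construction of \cite[Section~3.3]{kata2z}, which proves that the closure of the image operator generated by the canonical frame relation $S_\vee$, composed with the Galois connection, yields the $\pi$-extension of an operator of output type $\partial$. Invoking this general theorem completes the identification.

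The main obstacle, if any, is conceptual rather than computational: one has to be careful that the notion of canonical extension for a lattice expansion requires both the correct canonical lattice extension and the correct $\sigma$/$\pi$ extension of each operator, matched to the operator's output type. All three ingredients -- the lattice embedding, the preservation of $\nu$ on clopen elements, and the identification of $\nu^\pi$ as the correct extension -- have already been established in the paper or cited from \cite{mai-harding,kata2z}, so the proof essentially assembles them.
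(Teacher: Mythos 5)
Your proposal is correct and follows essentially the same route as the paper's own proof, which likewise cites \cite[Proposition~2.6]{mai-harding} for the lattice part and \cite[Section~3.3]{kata2z} for the identification of $\nu^\pi$ as the $\pi$-extension. Your second step, explicitly invoking Lemma~\ref{restriction to clopens} to check that $h$ is a homomorphism with respect to the quasi-complementation, is left implicit in the paper's proof but is established immediately beforehand, so it is a harmless (and arguably welcome) addition.
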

\begin{proof}
The proof that the lattice $\gpsi$ is a canonical extension of $\mathbf{L}$ was given in  \cite[Proposition~2.6]{mai-harding}. That $\nu^\pi$ is indeed the $\pi$-extension of the quasi complementation operator $\nu$ was pointed out already and the proof is an instance of the corresponding argument in \cite[Section~3.3]{kata2z}.
\end{proof}

It remains to finally show that for each of the quasi-complemented lattice varieties of Figure~\ref{quasi-complements figure},  ${\tt KO}\mathcal{G}(\filt(\mathbf{L}))$ is not merely a sublattice, but a subalgebra of the full complex algebra $\mathfrak{F}^+$ of the canonical frame $\mathfrak{F}=\mathbf{L}_+$ of $\mathbf{L}$ which, in addition, is a canonical extension of $\mathbf{L}$.

\begin{thm}\rm
  \label{fig 1 theorem}
  Let $\mathbf{L}=(L,\leq,\wedge,\vee,0,1,\nu)$ be a bounded lattice with an antitone map $\nu$. If $\mathbf{L}$ belongs to one of the lattice varieties of Figure \ref{quasi-complements figure}, then so does the full complex algebra $(\gpsi,\subseteq,\bigcap,\bigvee,\emptyset,X,\nu^\pi)$ of its canonical frame. In other words, each of the varieties of Figure \ref{quasi-complements figure} is closed under canonical extensions.
\end{thm}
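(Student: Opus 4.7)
The plan is to verify, case by case for each variety $\mathbb{V}$ lying above $\mathbb{M}$ in Figure~\ref{quasi-complements figure}, that the corresponding frame condition from Corollary~\ref{frame conditions for complex algebra coro} holds in the canonical frame $\mathbf{L}_+ = (X,\upv,Y,S_\vee)$ of $\mathbf{L}\in\mathbb{V}$. Since Proposition~\ref{canonical extension prop} already handles the base case $\mathbb{M}$ (and simultaneously identifies $\mathfrak{F}^+$ as a canonical extension of $\mathbf{L}$), each verification, combined with the respective clause of Corollary~\ref{frame conditions for complex algebra coro}, places $\mathfrak{F}^+$ in the desired subvariety.

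For $\mathbb{G}$ I exploit the explicit description of the canonical Galois dual relation given in equation~\eqref{galois dual defn}: $z\perp x$ iff $\exists a\in L\;(a\in x\wedge \nu a\in z)$. Under the hypothesis $a\leq\nu\nu a$, if $a\in x$ and $\nu a\in z$, then setting $b=\nu a$ yields $b\in z$ and $\nu b=\nu\nu a\geq a\in x$, so that $x\perp z$; this gives symmetry of $\perp$. For $\mathbb{O}$ I additionally verify irreflexivity: $x\perp x$ would furnish some $a$ with $a,\nu a\in x$, hence $a\wedge\nu a\in x$; but the ortholattice identity $a\wedge\nu a=0$ then forces $0\in x$, contradicting properness.

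The involution case $\mathbb{INV}$ is the subtler step. When $\nu\nu=\mathrm{id}$ on $\mathbf{L}$, the same operation admits distribution type $(\partial;1)$, so the canonical construction of Section~\ref{normal ops rep section} also produces a dual point operator $\widetilde{\nu}(y)=\bigvee\{x_{\nu a}\midsp a\in y\}$ (the filter generated by $\{\nu a\midsp a\in y\}$) and a dual canonical relation $R_\wedge\subseteq X\times Y$ with $xR_\wedge y$ iff $\widetilde{\nu}(y)\subseteq x$. Axioms (I1)--(I3) are then instances of the canonical properties established for arbitrary normal lattice operators in \cite[Section~4.1]{duality2}, dual to Proposition~\ref{canonical section stability Snu}. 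For (I4), I compute $\widetilde{\nu}(\widehat{\nu}(x))$ as the filter generated by $\{\nu b\midsp b\in\widehat{\nu}(x)\}$; since any $b\in\widehat{\nu}(x)$ satisfies $b\leq\nu a_1\vee\cdots\vee\nu a_n=\nu(a_1\wedge\cdots\wedge a_n)$ for some $a_i\in x$, the identity $\nu\nu=\mathrm{id}$ gives $\nu b\geq a_1\wedge\cdots\wedge a_n\in x$, so $\widetilde{\nu}(\widehat{\nu}(x))\subseteq x$; the reverse inclusion comes from $a=\nu\nu a\in\widetilde{\nu}(\widehat{\nu}(x))$ for each $a\in x$. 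Axiom (I5) is entirely dual. Together with symmetry of $\perp$ (already verified for $\mathbb{G}$, which contains $\mathbb{INV}$), Corollary~\ref{frame conditions for complex algebra coro}(3) gives involution on $\gpsi$, and combining with irreflexivity yields the ortholattice conclusion of Corollary~\ref{frame conditions for complex algebra coro}(4).

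For $\mathbb{DMA}$ and $\mathbb{BA}$ the condition to verify is that the sections of the Galois dual $R'_\leq$ of the upper bound relation $R_\leq$ are Galois sets; by Theorem~\ref{upper bound rel prop} this forces $\gpsi$ to be completely distributive, and Corollary~\ref{frame conditions for complex algebra coro}(5)--(6) then delivers the result. Since the specialization order on $X$ is filter inclusion, $uR_\leq xz$ amounts to $x,z\subseteq u$, and $yR'_\leq xz$ amounts to the requirement that every filter containing $x\cup z$ meets $y$. Distributivity of $\mathbf{L}$ lets me replace this universal quantifier by the finitary statement $(x\vee z)\cap y\neq\emptyset$, i.e.\ $\exists a\in x,b\in z\;(a\wedge b\in y)$, from which stability of the sections in $\gpsi$ (resp.\ co-stability in $\gphi$) can be read off using the characterization of closed/open elements from Lemma~\ref{basic facts} and Proposition~\ref{fsat prop}. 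The main obstacle I expect is precisely this last verification: translating the distributive identity in $\mathbf{L}$ into a clean stability statement about the three sections of $R'_\leq$ in the canonical frame, which requires juggling the join of filters, the generation of ideals, and the Galois connection simultaneously; the involution verification (I4)/(I5) above will also demand care because the ``closed'' elements in $\gpsi$ and the ``closed'' elements in $\gphi$ are generated via different point operators, but both computations reduce, via $\nu\nu=\mathrm{id}$, to the generation statements for $\widehat{\nu}$ and $\widetilde{\nu}$.
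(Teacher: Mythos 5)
Your case analysis for $\mathbb{M}$, $\mathbb{G}$, $\mathbb{O}$ and $\mathbb{INV}$ matches the paper's proof: symmetry of $\perp$ from $a\leq\nu\nu a$ via equation~\eqref{galois dual defn}, irreflexivity from $a\wedge\nu a=0$ and properness of filters, and the identities $\widetilde{\nu}(\widehat{\nu}(x))=x$, $\widehat{\nu}(\widetilde{\nu}(y))=y$ feeding into Lemma~\ref{involution frame lemma} are all exactly the paper's computations, correctly carried out.

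The gap is in the $\mathbb{DMA}$/$\mathbb{BA}$ case, and it sits precisely at the step you defer. First, your attribution of where distributivity enters is wrong: the equivalence of $yR'_\leq xz$ with $\exists a\in x\,\exists b\in z\,(a\wedge b\in y)$ holds in \emph{every} bounded lattice, since the filters containing both $x$ and $z$ form $\Gamma(x\vee z)$, $\upv$ is increasing, and the generated filter $x\vee z$ consists of the elements above some $a\wedge b$ with $a\in x$, $b\in z$. If stability of the sections could then be ``read off'' from this finitary description, Theorem~\ref{upper bound rel prop} would make $\gpsi$ completely distributive for every bounded lattice, which is false. The distributive law is needed exactly in the unexecuted verification: e.g.\ to show the section $yR'_\leq x[\_]=\{z\mid \exists a\in x\,\exists b\in z\,(a\wedge b\in y)\}$ is stable, one wants it to equal ${}\rperp\{D\}$ for the downset $D=\{b\mid\exists a\in x\,(a\wedge b\in y)\}$, and proving $D$ is closed under joins requires $(a_1\wedge a_2)\wedge(b_1\vee b_2)=((a_1\wedge a_2)\wedge b_1)\vee((a_1\wedge a_2)\wedge b_2)$ --- this is the only place the hypothesis on $\mathbf{L}$ is used, and it is the part you leave as an expected obstacle rather than a proof. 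The paper sidesteps the computation entirely by observing that in a distributive lattice $\wedge$ is itself a normal operator of distribution type $(1,1;1)$ whose canonical relation (built from the point operator $\widehat{f}(x,z)=\bigvee\{x_{a\wedge b}\mid a\in x,\ b\in z\}$) coincides with $R_\leq$, so the general stability result for sections of Galois duals of canonical relations (\cite[Lemma~4.6]{duality2}) applies at once; either route works, but yours is not yet a proof until the ideal-closure argument above (or an equivalent) is supplied.
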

\begin{proof}
We treat the cases for $\nu$ in turn.\\[1mm]
{\em Case 1: Minimal Quasi-complement.}

By the results of sections \ref{section: frames for quasi comp lats} and  \ref{normal ops rep section},  $\nu^\pi=(\;)^*:\gpsi\lra\gpsi$ co-distributes over arbitrary joins in $\gpsi$, returning a meet, i.e. $\left(\bigvee_{i\in I}A_i\right)^*=\bigcap_{i\in I}A_i^*$. Hence the full complex algebra $(\gpsi,\subseteq,\bigcap,\bigvee,\emptyset,X,(\;)^*)$ of the canonical frame
is a complete lattice with a minimal quasi-complementation operator, given that we also have $\emptyset^*=(\overline{\eta}_S(\emptyset))'=\emptyset'=X$, using normality of the classical, though sorted, image operator $\eta_S$.
\\[1mm]
{\em Case 2: Galois connection.}

If $\nu$ satisfies the Galois condition $a\leq\nu\nu a$, then it is immediate that the canonical relation $\perp=S'_\vee$, defined by equation \eqref{galois dual defn}, is symmetric. By Lemma \ref{galois frame lemma}, this implies that $A\subseteq A^{**}$, for any $A\in\gpsi$.
\\[1mm]
{\em Case 3: Involution.}

When $\nu$ is an involution, it has both distribution types $(1;\partial)$ and $(\partial;1)$, hence there are two different ways to obtain a representation, following the framework of \cite{duality2}, one being the $\sigma$-extension $\nu^\sigma$ and the other being the $\pi$-extension $\nu^\pi$ of $\nu$. We show that the $\sigma$-extension is the map $\overline{\eta}_\wedge$ generated by the derived relation $R_\wedge$ that we defined for the proof of Lemma~\ref{involution frame lemma} and that all additional frame axioms (I1)--(I5) of Table~\ref{additional axioms table} hold in the canonical frame. Figure~\ref{sigma-pi-canonical} displays, side by side, the relational construction of the $\sigma$ and $\pi$ extensions of the lattice involution $\nu$.

\begin{figure}[t]
\caption{Relational Construction of the $\sigma$ and $\pi$-extensions}
\label{sigma-pi-canonical}
\begin{tabbing}
$\pi$-extension, $\nu^\pi$\hskip3.5cm\= $\sigma$-extension, $\nu^\sigma$\\[2mm]
$\widehat{\nu}(x)$\hskip5mm\==\hskip2mm\= $\bigvee_{a\in x}y_{\nu a}$ ($\widehat{\nu}:X\lra Y$) \hskip7mm\= 
$\widetilde{\nu}(y)$\hskip5mm\= =\hskip2mm\= $\bigvee_{a\in y}x_{\nu a}$ ($\widetilde{\nu}:Y\lra X$)\\
$yS_\vee x$\>iff\> $\widehat{\nu}(x)\subseteq y$ \> $xS_\wedge y$\>iff\> $\widetilde{\nu}(y)\subseteq x$\\
$zS'_\vee x$\>iff\> $z\upv\widehat{\nu}(x)$ \> $vS'_\wedge y$ \> iff\> $\widetilde{\nu}(y)\upv v$\\
$S_\vee x$\>=\>$\Gamma\widehat{\nu}(x)\in\gphi$ \> $S_\wedge y$ \>=\> $\Gamma\widetilde{\nu}(y)\in\gpsi$\\
$S'_\vee x$\>=\> $\rperp\{\widehat{\nu}(x)\}\in \gpsi$ \> $S'_\wedge y$ \>=\> $\{\widetilde{\nu}(y)\}\rperp\in\gphi$\\
$z\perp_\vee x$ \>iff\> $zS'_\vee x$ \> $y\perp_\wedge v$ \>iff\> $yS'_\wedge v$\\
$\eta_{S_\vee}(U)$\>=\> $\bigcup_{x\in U}S_\vee x$           \> $\eta_{S_\wedge}(V)$\>=\> $\bigcup_{y\in V}S_\wedge y$      \\
$\overline{\eta}_{S_\vee}(A)$\>=\>$\bigvee_{x\in A}S_\vee x\in\gphi$ \>   $\overline{\eta}_{S_\wedge}(B)$\>=\> $\bigvee_{y\in B}S_\wedge y\in\gpsi$    \\[2mm]
$\overline{\eta}_\vee(A)$\>=\> $(\overline{\eta}_{S_\vee}(A))'$  \> $\overline{\eta}_\wedge(A)$\>=\> $\overline{\eta}_{S_\wedge}(A')$\\
$\overline{\eta}_\vee(A)$\>=\>   $A^*$ = $\nu^\pi(A)$ \> $\overline{\eta}_\wedge(A)$\>=\> $\overline{\eta}_{S_\wedge}(A')$ = $\nu^\sigma(A)$
\end{tabbing}
\hrulefill
\end{figure}

We first verify that for  any filter $x$ and ideal $y$, given the definitions in Table~\ref{sigma-pi-canonical},   $\widetilde{\nu}(\widehat{\nu}(x))=x$ and $\widehat{\nu}(\widetilde{\nu}(y))=y$.

The inclusions $x\subseteq\widetilde{\nu}(\widehat{\nu}(x))$ and $y\subseteq \widehat{\nu}(\widetilde{\nu}(y))$ are immediate. Indeed, if $a\in x$, then $\nu a\in\widehat{\nu}(x)=y\in Y$, hence $a=\nu\nu a\in \widetilde{\nu}(\widehat{\nu}(x))$ and similarly for the other inclusion.

To show that $\widetilde{\nu}(\widehat{\nu}(x))\subseteq x$, let $c\in \widetilde{\nu}(\widehat{\nu}(x))$. Then there exist elements $e_1,\ldots,e_n$ in the ideal $v=\widehat{\nu}(x)$ such that $c\in\widetilde{\nu}(v)$ (a filter) because $\nu e_1\wedge\cdots\wedge\nu e_n\leq c$. Using the fact that $\nu$ is an involution, this is equivalent to $\nu c\leq e_1\vee\cdots\vee e_n=e\in v=\widehat{\nu}(x)$, hence $\nu c\in\widehat{\nu}(x)=\bigvee\{y_{\nu a}\midsp a\in c\}$. Let then $a_1,\ldots,a_m\in x$ such that $\nu c\leq\nu a_1\vee\cdots\vee\nu a_m=\nu(a_1\wedge\cdots\wedge a_m)=\nu a$, where $a=a_1\wedge\cdots\wedge a_m\in x$. Using again the involution property of $\nu$, this is equivalent to $a\leq c$ and so $c\in x$.

The argument for the inclusion $\widehat{\nu}(\widetilde{\nu}(y))\subseteq y$ is similar.

The identities established mean that $\widehat{\nu},\widetilde{\nu}$ are injective and inverses of each other. But now we obtain that $xS_\wedge y$ iff $\widetilde{\nu}(y)\subseteq x$ iff $y=\widehat{\nu}(\widetilde{\nu}(y))\subseteq\widehat{\nu}(x)$. Hence, $S_\wedge$ is identical to $R_\wedge$, the derived relation defined for the proof of Lemma~\ref{involution frame lemma} in any frame. By \cite[Lemma~4.3, Lemma~4.6]{duality2}, axioms (I1)--(I3) in Table~\ref{additional axioms table} hold for the canonical relation $S_\wedge$, hence for $R_\wedge$. The identities of axioms (I4) and (I5) of Table~\ref{additional axioms table} were also shown above to hold. We may then appeal to the proof of Lemma~\ref{involution frame lemma} and conclude that $\overline{\eta}_\vee=\overline{\eta}_\wedge$. In other words, $\nu^\sigma=\nu^\pi$ and hence the full complex algebra of Galois sets in the canonical frame is an involutive lattice.
\\[2mm]
{\em Case 4: Orthocomplement.}

If the lattice is an ortholattice, then by the argument for the case of lattices with an involution previously given and by Lemma \ref{ex falso frame lemma} it suffices to verify that the canonical relation ${\perp}=S'_\vee$ is irreflexive. By equation~\eqref{galois dual defn} and definition of $\perp$ as the Galois dual relation $S'_\vee$ of the relation $S_\vee$, $x\perp z$ holds iff there exists a lattice element $e$ such that $e\in z$ and $\nu e\in x$. Reflexivity, $x\perp x$, would then imply that $e\wedge\nu e=0\in x$, contradicting the fact that $x$ is a proper filter.
\\[1mm]
{\em Case 5: Distribution and De Morgan Algebras.}

For the case where the lattice is a De Morgan algebra, i.e. a distributive lattice with an involution, it suffices to prove that $\gpsi$ is distributive. An algebraic proof of this has been given in \cite[Lemma~5.1]{mai-harding}, but we provide here a new proof based on the constructions we have presented.

Note first that both lattice join $\vee$ and meet $\wedge$ are trivially normal lattice operators in the sense of Definition \ref{normal lattice op defn}, but meet is an operator (in the J\'{o}nsson-Tarski sense) only when it distributes over joins. When this is the case, meet also has the distribution type $(1,1;1)$. Its $\sigma$-extension $\wedge_\sigma$, is constructed as outlined in section \ref{normal ops rep section}. Specifically, letting $\wedge=f$, the point operator $\widehat{f}$ on filters is defined by $\widehat{f}(x,z)=\bigvee\{x_{a\wedge b}\midsp a\in x\mbox{ and }b\in z\}$ and the canonical relation $R_\wedge$ is then defined by $xR_\wedge uz$ iff $\forall a,b(a\in u\mbox{ and } b\in z\lra a\wedge b\in x)$.  Note that $R_\wedge$ is the upper bound relation of Theorem \ref{upper bound rel prop}. Considering the image operator $\alpha_R:\powerset(X)\times\powerset(X)\lra\powerset(X)$ defined by $\alpha_R(U,W)=\{x\in X\midsp \exists u\in U\exists z\in W\; xR_\wedge uz\}$,  we obtain that  $\alpha_R(A,C)=A\cap C$, for $A,C\in \gpsi$. All sections of the Galois dual relation of $R_\wedge$ are stable, as they are for any canonical relation, as proven in \cite[Lemma~4.6]{duality2}. It then follows by Theorem \ref{upper bound rel prop} that intersection distributes over arbitrary joins, in other words, $\gpsi$ is a completely distributive lattice.
\\[1mm]
{\em Case 6: Classical complement and Boolean algebras.}

Finally, for the case of Boolean algebras, combine the arguments given for ortholattices  and for De Morgan algebras.
\end{proof}

\begin{coro}[Completeness/Canonicity]\rm
The logic $\mathrm{\Lambda}(\mathbb{V})$, for each of the varieties $\mathbb{V}$ of Figure~\ref{quasi-complements figure}, is complete and canonical.
\end{coro}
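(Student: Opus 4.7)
The plan is to derive the corollary by routine bookkeeping from Theorem~\ref{fig 1 theorem}, Proposition~\ref{canonical extension prop}, and the algebraic completeness Proposition~\ref{completeness}. Fix a variety $\mathbb{V}$ from Figure~\ref{quasi-complements figure} and set $\mathbf{\Lambda} = \mathrm{\Lambda}(\mathbb{V})$. Canonicity amounts to the statement that $\mathbb{V}$ is closed under canonical extensions, and this is exactly the content of Theorem~\ref{fig 1 theorem}: for every $\mathbf{L} \in \mathbb{V}$, Proposition~\ref{canonical extension prop} identifies the canonical extension of $\mathbf{L}$ with the full complex algebra $\mathfrak{F}^+ = (\gpsi, \subseteq, \bigcap, \bigvee, \emptyset, X, \nu^\pi)$ of the canonical frame $\mathfrak{F} = \mathbf{L}_+$, and Theorem~\ref{fig 1 theorem} guarantees $\mathfrak{F}^+ \in \mathbb{V}$. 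Since $\mathbb{V}$ is equationally axiomatized, any equation valid in every $\mathbf{L} \in \mathbb{V}$ is in particular valid in its canonical extension $\mathfrak{F}^+$; equivalently, every sequent in $\mathbf{\Lambda}$ is valid in the associated canonical frame $\mathfrak{F}$ under any admissible valuation.

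For completeness with respect to the relational semantics, instantiate the above to the Lindenbaum--Tarski algebra $\mathbf{L}_\mathbf{\Lambda}$ of $\mathbf{\Lambda}$, which is itself in $\mathbb{V}$ by the algebraic completeness of Proposition~\ref{completeness}. Suppose $\varphi \not\proves_\mathbf{\Lambda} \psi$, so that the equivalence classes satisfy $[\varphi] \not\leq [\psi]$ in $\mathbf{L}_\mathbf{\Lambda}$. By Theorem~\ref{choice-free lat rep} and Lemma~\ref{restriction to clopens}, the representation map $a \mapsto X_a$ is a homomorphism of $\mathbb{V}$-algebras from $\mathbf{L}_\mathbf{\Lambda}$ into $\mathfrak{F}^+$, preserving the quasi-complementation operator (and any additional identities defining $\mathbb{V}$, by Theorem~\ref{fig 1 theorem}). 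Thus $X_{[\varphi]} \not\subseteq X_{[\psi]}$ in $\gpsi$, which, under the canonical valuation $v(p) = X_{[p]}$, refutes the sequent $\varphi \proves \psi$ in the canonical frame $\mathfrak{F}$. Combined with the soundness Corollary~\ref{soundness} applied to the frame-axiomatic class identified in Corollary~\ref{frame conditions for complex algebra coro} for $\mathbb{V}$, this yields completeness of $\mathbf{\Lambda}$ with respect to that class of frames.

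There is no genuinely hard step left: the representation of the lattice reduct is handled by Theorem~\ref{choice-free lat rep}, the representation of $\nu$ by Lemma~\ref{restriction to clopens}, and the fact that the additional axioms defining each $\mathbb{V}$ transfer from $\mathbf{L}$ to $\mathfrak{F}^+$ is precisely the content of the case analysis inside Theorem~\ref{fig 1 theorem}. The only point requiring mild care is that, in the involutive, orthocomplement, De Morgan, and Boolean cases, one must invoke the identification $\nu^\sigma = \nu^\pi$ established in the proof of Theorem~\ref{fig 1 theorem} (case~3), which ensures that the single operator $(\;)^*$ used to interpret $\neg$ in the frame coincides with both canonical extensions and therefore validates the involution identity as well as any stronger axioms inherited from $\mathbb{V}$.
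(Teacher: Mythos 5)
Your proposal is correct and follows essentially the same route as the paper, whose own proof is a one-line appeal to exactly the ingredients you assemble: Lemma~\ref{restriction to clopens}, Proposition~\ref{canonical extension prop} and Theorem~\ref{fig 1 theorem} for canonicity, together with the representation construction (Theorem~\ref{choice-free lat rep}, the Lindenbaum--Tarski algebra via Proposition~\ref{completeness}, and soundness via Corollary~\ref{soundness}) for completeness. You have merely spelled out the bookkeeping the paper leaves implicit, including the correct observation that the involutive and stronger cases rest on the identification $\nu^\sigma=\nu^\pi$ from Case~3 of Theorem~\ref{fig 1 theorem}.
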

\begin{proof}
The proof follows from the representation construction, making use of
Lemma~\ref{restriction to clopens}, Proposition~\ref{canonical extension prop} and Theorem~\ref{fig 1 theorem} for the canonicity claim.
\end{proof}

\section{Spectral Duality}
\label{section: spectral duality}
Let $\mathbf{M,G,INV,DMA,O}$ and $\mathbf{BA}$ be the categories of algebras in the respective varieties $\mathbb{M,G,INV,DMA,O}$ and $\mathbb{BA}$ of Figure \ref{quasi-complements figure} with the usual algebraic homomorphisms.

As in \cite{duality2}, $\mathbf{SRF}_\tau$ designates the category of sorted residuated frames with a relation $R^\sigma$ for each $\sigma\in\tau$. For our present purposes we only consider frames $\mathfrak{F}=(X,\upv,Y,S_\vee)$, with $\sigma(S_\vee)=(\partial;1)$, given that the distribution type of the normal lattice operator $\nu$ under study is $\delta(\nu)=(1;\partial)$. In particular then we let $\mathbf{SRF}_{\nu M}=\mathbf{SRF}_{\{(1;\partial)\}}$ designate the category with objects the sorted residuated frames with a relation $S_\vee$ as above, subject to the axioms of Table \ref{quasiframe axioms table}.
Morphisms are the weak bounded morphisms specified in \cite{duality2} and respecting the frame relation $S$.  The axiomatization for morphisms is presented in Table~\ref{frame axioms nuM}, axioms (M1)--(M5).

The map $\type{F}$ sending a quasi-complemented lattice to its dual frame is the object part of a functor whose action on quasi-complemented lattice homomorphisms $h:\mathbf{L}_2\lra\mathbf{L}_1$ returns the weak bounded morphism $\type{F}(h)=(p,q)$, where $p:\filt(\mathbf{L}_1)\lra\filt(\mathbf{L}_2)$ and $q:\idl(\mathbf{L}_1)\lra\idl(\mathbf{L}_2)$ are each defined as an inverse image map, $p(x)=h^{-1}[x]$ and $q(y)=h^{-1}[y]$. That $\type{F}(h)$ satisfies the morphism axioms (M1)--(M5) of Table~\ref{frame axioms nuM} follows from \cite[Section~4.3, Proposition~4.9]{duality2}, where the general case of an arbitrary normal lattice expansion was treated. Thus $\type{F}:\mathbf{M}\lra\mathbf{SRF}_{\nu M}$ is a properly defined functor. 

\begin{defn}\rm
\label{pi and pi inverse defn}
If $(p,q):(X_2,I_2,Y_2)\ra(X_1,I_1,Y_1)$, where $I_i$ is the complement of $\upv_i$, with $p:X_2\ra X_1$ and $q:Y_2\ra Y_1$,
then we let $\pi=(p,q)$ and we define $\pi^{-1}$
by setting
\[
\pi^{-1}(W)=\left\{\begin{array}{cl}p^{-1}(W)\in\powerset(X_2) &\mbox{ if }W\subseteq X_1\\
q^{-1}(W)\in\powerset(Y_2) & \mbox{ if }W\subseteq Y_1.\end{array}\right.
\]
 Similarly, we let
\[
\pi(w)=\left\{\begin{array}{cl}p(w)\in X_1 &\mbox{ if }w\in X_2\\ q(w)\in Y_1 &\mbox{ if }w\in Y_2.
\end{array}\right.
\]
\end{defn}

For a frame $\mathfrak{F}=(X,\upv,Y,S_\vee)$ we let $\mathrm{L}(\mathfrak{F})=\mathfrak{F}^+$ be its full complex algebra $\gpsi$ of Galois stable subsets of $X$. If $\pi=(p,q):\mathfrak{F}_2\lra\mathfrak{F}_1$ is a frame morphism, let $\mathrm{L}(\pi)=\pi^{-1}:\mathfrak{F}^+_1\lra\mathfrak{F}_2^+$. By \cite[Proposition~3.24, Lemma~3.25]{duality2}, where the general case of normal lattice expansions was treated, we can conclude that $\mathrm{L}(\pi)$ is a homomorphism of quasi-complemented lattices.

The reader is invited to verify that $\type{F}:\mathbf{M}\leftrightarrows\mathbf{SRF}^\mathrm{op}_{\nu M}:\type{L}$ is a dual adjunction. The components  $\eta_\mathbf{L}:\mathbf{L}\ra\mathrm{LF}(\mathbf{L})$ of the unit $\eta:1_\mathbf{IL}\stackrel{\cdot}{\ra}\mathrm{LF}$ are defined by $\eta_\mathbf{L}(a)=X_a=\{x\in\filt(\mathbf{L})\midsp a\in x\}$.
Taking contravariance into account, if $\mathfrak{F}=(X,\upv,Y,S_\vee)$ is a frame,
the component $\varepsilon_\mathfrak{F}:\mathfrak{F}\ra \mathrm{FL}(\mathfrak{F})$ of the counit $\varepsilon:1_{\mathbf{SRF}_{\nu M}}\stackrel{\cdot}{\ra}\mathrm{FL}$ is a pair $\varepsilon_\mathfrak{F}=(\lambda_\mathfrak{F},\rho_\mathfrak{F})$, where  $\lambda_\mathfrak{F}(x)=\{A\in\mathrm{L}(\mathfrak{F})=\gpsi\midsp x\in A\}$ and $\rho_\mathfrak{F}(y)=\{B\in\gphi\midsp y\in B\}$, for $x\in X, y\in Y$.

$\mathbf{SRF}_{\nu M}$ is too large a category for duality purposes and we specify full subcategories for each of the cases of interest. In \cite{duality2}, the notation $\mathbf{SRF}^*_\tau$ was used to designate the intended subcategory and we keep with this notation, while also subscripting appropriately to distinguish between the different categories of interest in this article. For a frame $\mathfrak{F}$ in $\mathbf{SRF}^*_\tau$, we let $\type{L}(\mathfrak{F})$ be the full complex algebra $\mathfrak{F}^+$ of stable sets and $\type{L}^*(\mathfrak{F})$ the complex algebra of clopen elements.

\begin{table}[!t]
\caption{Axioms for $\mathbf{SRF}^*_{\nu\mathrm{M}}$}
\label{frame axioms nuM}
{\bf Object Axioms}
\begin{enumerate}
  \item[(F0)] The complement $I$ of the Galois relation $\upv$ of the frame is quasi-serial, in other words
   $\forall x\in X\exists y\in Y\; xIy$ and $\forall y\in Y\exists x\in X\; xIy$.
  \item[(F1)] The frame is separated.
  \item[(F2)] For each $z\in X$, $S_\vee z$ is a closed element of $\gphi$ and if $z$ is a clopen element (i.e. $\Gamma z={}\rperp\{v\}$ for a (unique, by separation) point $v$ in $Y$), then $S_\vee z$ is a clopen element of $\gphi$.
  \item[(F3)] For each $y\in Y$  the set $yS_\vee$ is decreasing (a down set).
  \item[(F4)] Both sections of the Galois dual relation $S'_\vee$ of $S_\vee$ are Galois sets.
  \item[(F5)] Clopen elements are closed under finite intersections in each of $\gpsi,\gphi$.
  \item[(F6)] The family of closed elements, for each of $\gpsi,\gphi$, is the intersection closure of the respective family of clopens.
  \item[(F7)] Each of $X,Y$ carries a spectral topology generated by the basis of their respective families of clopen elements.
\end{enumerate}
{\bf Weak Bounded Morphism Axioms}\\
  \mbox{\hskip1cm}For a sorted map $\pi=(p,q):(X_2,I_2,Y_2,S_{2\nu})\lra(X_1,I_1,Y_1,S_{1\nu})$, where\\
  \mbox{\hskip1cm}$p:X_2\lra X_1$ and $q:Y_2\lra Y_1$
  \begin{enumerate}
  \item[(M1)] $\forall x'\in X_2\forall y'\in Y_2\;(x'I_2y'\lra p(x')I_1q(y'))$.
  \item[(M2)] $\forall x\in X_1\forall y'\in Y_2(xI_1 q(y')\lra\exists x'\in X_2(x\leq p(x')\wedge x'I_2y')  )$.
  \item[(M3)] $\forall x'\in X_2\forall y\in Y_1(p(x')I_1y\lra\exists y'\in Y_2(y\leq q(y')\wedge x'I_2y'))$.
  \item[(M4)] $\forall z\in X_1\forall v\in Y_2(q(v)S_{1\nu}z\lra \exists x\in X_2(z\leq p(x)\mbox{ and }vS_{2\nu}x))$.
  \item[(M5)] For all points $u$, $\pi^{-1}(\Gamma u)=\Gamma v$, for some (unique, by separation) $v$.
\end{enumerate}
\hrulefill
\end{table}

\begin{thm}\rm
\label{nuM duality}
Let $\mathbf{SRF}^*_{\nu\mathrm{M}}$ be the full subcategory of $\mathbf{SRF}_{\nu M}$ axiomatized by the axioms in Table \ref{frame axioms nuM}.
Then $\type{F}:\mathbf{M}\iso(\mathbf{SRF}^*_{\nu\mathrm{M}})^\mathrm{op}:\type{L}^*$, i.e. the functors ${\tt F,L}^*$ form a categorical duality .
\end{thm}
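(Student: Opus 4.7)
The plan is to verify the four standard ingredients of a dual equivalence: (i) both functors are well-defined on the subcategory $\mathbf{SRF}^*_{\nu\mathrm{M}}$, (ii) the unit $\eta_\mathbf{L}:\mathbf{L}\to\type{L}^*\type{F}(\mathbf{L})$ is a $\mathbf{M}$-isomorphism, (iii) the counit $\varepsilon_\mathfrak{F}=(\lambda_\mathfrak{F},\rho_\mathfrak{F}):\mathfrak{F}\to\type{F}\type{L}^*(\mathfrak{F})$ is an $\mathbf{SRF}^*_{\nu\mathrm{M}}$-isomorphism, and (iv) both families are natural. Since $\type{F},\type{L}$ already form a dual adjunction (with $\type{L}^*$ the restriction of $\type{L}$ to the clopen subalgebra), naturality is immediate from the definitions; the real work is (i)--(iii).

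For (i), I would first check that the canonical frame $\type{F}(\mathbf{L})$ satisfies all the axioms in Table~\ref{frame axioms nuM}. Axioms (F0)--(F4) were already established in Proposition~\ref{canonical section stability Snu}. For (F5)--(F7), observe that under the representation $\eta_\mathbf{L}(a)=X_a=\Gamma x_a={}\rperp\{y_a\}$ the clopen stable sets are precisely $\{X_a\mid a\in L\}={\tt KO}\mathcal{G}(\mathfrak{X})$, which by Proposition~\ref{compact-open stable prop} and Theorem~\ref{choice-free lat rep} form a lattice isomorphic to $\mathbf{L}$; hence (F5) holds, the closed elements are the stable sets, which are intersections of opens $\rperp\{y\}=\bigcap_{a\in y}X_a$ by Lemma~\ref{basic facts} (F6), and (F7) is Proposition~\ref{spectral prop}. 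The clopen clause in (F2) follows from the computation in Lemma~\ref{restriction to clopens} that $S_\vee x_a=\Gamma y_{\nu a}$ so that $(X_a)^*=X_{\nu a}$. For $\type{L}^*$, I would show that on any $\mathfrak{F}\in\mathbf{SRF}^*_{\nu\mathrm{M}}$ the clopens of $\gpsi$ form a sub-quasi-complemented-lattice: closure under $\cap,\vee$ is given by (F5), and closure under $(\;)^*$ uses the clopen clause of (F2), since if $A={}\rperp\{y\}$ with $y=\widehat{\nu}^{-1}$ of some clopen index, then $A^*=\overline{\eta}_S(A)'$ can be computed as $\bigcap_{x\in A}\rperp\{\widehat{\nu}(x)\}$, which one shows is again clopen from axiom (F2). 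Functoriality on morphisms follows from \cite[Proposition~4.9]{duality2} together with morphism axiom (M5), which guarantees inverse images of clopens are clopens.

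For (ii), the unit $\eta_\mathbf{L}(a)=X_a$ is a lattice isomorphism onto ${\tt KO}\mathcal{G}(\filt\mathbf{L})=\type{L}^*\type{F}(\mathbf{L})$ by Theorem~\ref{choice-free lat rep}, and it commutes with the quasi-complementation by Lemma~\ref{restriction to clopens}; so $\eta_\mathbf{L}$ is an isomorphism in $\mathbf{M}$.

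Step (iii) will be the main obstacle. I would define $\mathfrak{F}^*=\type{F}\type{L}^*(\mathfrak{F})$ as the canonical frame of the clopen algebra $\mathbf{L}^*=\type{L}^*(\mathfrak{F})$, so its points are proper filters/ideals of clopens. For $x\in X$, set $\lambda_\mathfrak{F}(x)=\{A\in\mathbf{L}^*\mid x\in A\}$, a proper filter of $\mathbf{L}^*$ by (F0), and dually $\rho_\mathfrak{F}(y)=\{B\in\type{L}^*(\mathfrak{F})_\partial\mid y\in B\}$. To show $\lambda_\mathfrak{F}$ is a bijection, injectivity uses separation (F1) together with the fact that the clopen elements join-generate the closed elements (F6) and closed elements join-generate $\gpsi$ (Lemma~\ref{basic facts}). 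Surjectivity is the subtle part: given a proper filter $\mathcal{F}$ of clopens, I would show that $\bigcap_{A\in\mathcal{F}}A$ is a nonempty closed element $\Gamma x$ by compactness of the spectral topology (F7), the fact that $\mathcal{F}$ being proper means every finite subcollection has nonempty intersection (using (F5)), and then argue $\lambda_\mathfrak{F}(x)=\mathcal{F}$ using (F6). Preservation of the Galois relation $\upv$ and of $S_\vee$ reduces to: $x\upv y$ iff every clopen containing $x$ meets every clopen containing $y$; and $yS_\vee x$ iff $\lambda_\mathfrak{F}(x)\cdot S_\vee^*\cdot\rho_\mathfrak{F}(y)$ holds in $\mathfrak{F}^*$, where the latter uses the definition of the canonical relation via the point operator $\widehat{\nu}^*(A)=\bigvee\{(C)^*\mid C\subseteq A,\ C\in\mathbf{L}^*\}$, matched against (F2)'s clopen clause asserting $S_\vee z$ is clopen whenever $\Gamma z$ is clopen. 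Finally, $\varepsilon_\mathfrak{F}$ being a frame morphism in both directions (i.e., an $\mathbf{SRF}^*_{\nu\mathrm{M}}$-isomorphism) amounts to checking morphism axioms (M1)--(M5); (M5) follows because $\lambda_\mathfrak{F},\rho_\mathfrak{F}$ send generators of closed elements to generators, while (M1)--(M4) unpack to the routine correspondence between frame-level incidences and their clopen-algebra counterparts. Naturality of $\eta$ and $\varepsilon$ then follows by chasing definitions of $\type{F}(h)$ as inverse image maps and of $\type{L}^*(\pi)=\pi^{-1}$ restricted to clopens.
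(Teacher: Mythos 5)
Your overall architecture agrees with the paper's on the functor well-definedness and the unit: axioms (F0)--(F4) for $\type{F}(\mathbf{L})$ via Proposition~\ref{canonical section stability Snu}, the clopen clause of (F2) via $S_\vee x_a=\Gamma y_{\nu a}$, (F5)--(F7) via Propositions~\ref{compact-open stable prop} and~\ref{spectral prop}, and $\mathbf{L}\iso\type{L}^*\type{F}(\mathbf{L})$ via Theorem~\ref{choice-free lat rep} and Lemma~\ref{restriction to clopens}. (One slip there: for (F6) the relevant identity is $\Gamma x=\bigcap_{a\in x}X_a$ for \emph{closed} elements; your formula ${}\rperp\{y\}=\bigcap_{a\in y}X_a$ is wrong in both respects, since open elements satisfy ${}\rperp\{y\}=\bigcup_{a\in y}X_a$, a union, and (F6) concerns closed elements $\Gamma x$, not stable sets in general.) Where you genuinely diverge is the counit: the paper does not construct $\varepsilon_\mathfrak{F}$ pointwise but instead invokes \cite{sdl}/\cite{discr} for the statement that any frame satisfying (F1), (F5)--(F7) is, up to isomorphism, the canonical frame of its own clopen algebra $\mathcal{L}$, and then concludes $\type{F}\type{L}^*(\mathfrak{F})\iso\type{F}(\mathcal{L})\iso\mathfrak{F}$ from the already-established unit isomorphism. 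Your direct construction is the natural self-contained alternative, but it is exactly there that your sketch has a gap.

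The gap is in surjectivity of $\lambda_\mathfrak{F}$. That $\bigcap_{A\in\mathcal{F}}A$ is a nonempty closed element $\Gamma x$ needs no compactness at all: it is immediate from (F6). The hard half is the converse inclusion $\lambda_\mathfrak{F}(x)\subseteq\mathcal{F}$, i.e.\ that every clopen $A$ with $\bigcap\mathcal{F}\subseteq A$ already lies in $\mathcal{F}$; this is a finite-refinement property ($\bigcap\mathcal{F}\subseteq A$ must force $A_1\cap\cdots\cap A_n\subseteq A$ for some $A_i\in\mathcal{F}$), and neither ``compactness'' nor ``(F6)'' as you invoke them delivers it. The standard finite-intersection argument fails because the members of $\mathcal{F}$ are compact-\emph{open} but not closed in the spectral topology, and repairing this by passing to the patch topology would reintroduce a choice principle (patch compactness), defeating the purpose of the theorem. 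What does work is sobriety, which is part of (F7): each $A_i\in\mathcal{F}$ is a principal upset $\Gamma u_i$, the family $\{u_i\}$ is directed by (F5), its supremum $x$ in the specialization order is the generic point of the irreducible closed set $\overline{\{u_i\midsp i\}}$, so $\bigcap\mathcal{F}=\Gamma x$, and any open set containing $x$ --- in particular any clopen $A\supseteq\Gamma x$ --- must meet $\{u_i\}$, whence $\Gamma u_i\subseteq A$ and $A\in\mathcal{F}$ by upward closure. Without this argument (or an explicit appeal to the representation results of \cite{sdl}/\cite{discr}, as the paper makes), the crux of the duality remains unproved.
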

\begin{proof}
Let $\mathbf{L}=(L,\leq,\wedge,\vee,0,1,\nu)$ be a lattice with a minimal quasi complementation operation.  $\type{F}(\mathbf{L})=(\filt(\mathbf{L}),\upv,\idl(\mathbf{L}),S_\vee)$ is the canonical frame of the lattice constructed in Section~\ref{rep quasi comp section}. Axioms (F0)--(F4) hold for the canonical frame, by Proposition \ref{canonical section stability Snu}. Note that axiom (F2) of Table \ref{frame axioms nuM} is a strengthening of the corresponding axiom in Table \ref{quasiframe axioms table}.

To verify the stronger version of (F2), suppose $\Gamma z={}\rperp\{v\}$ is a clopen element. Clopen elements of $\gpsi$ in the canonical frame are precisely the stable compact-open sets $X_a=\Gamma x_a=x_a{\uparrow}={}\rperp\{y_a\}$. By definition of the point operator $\widehat{\nu}$ and of the canonical relation $S_\vee$ in equation~\eqref{neg point op and rel}, $S_\vee x_a=\Gamma\widehat{\nu}(x_a)$. It is straightforward to see that $\widehat{\nu}(x_a)=y_{\nu a}$, hence $S_\vee x_a=\Gamma y_{\nu a}=Y^{\nu a}$ is a clopen element of $\gphi$, where $y_{\nu a}=(\nu a){\downarrow}$ is the principal ideal generated by the lattice element $\nu a$.

Axiom (F5) holds, since $X_a\cap X_b=X_{a\wedge b}$, while also $Y^a\cap Y^b=Y^{a\vee b}$. For (F6), by join-density of principal filters, any filter $x$ is the join $x=\bigvee_{a\in x}x_a$, hence every closed element $\Gamma x$ of $\gpsi$ is an intersection $\Gamma x=\bigcap_{a\in x}\Gamma x_a=\Gamma\left(\bigvee_{a\in x}x_a\right)$ and similarly for closed elements $\Gamma y\in\gphi$. Finally, axiom (F7) was verified in Proposition \ref{spectral prop} for meet semilattices and the same proof applies to establish that the topology on each of $X=\filt(\mathbf{L})$ and $Y=\idl(\mathbf{L})$ is a spectral topology.

By the above argument, ${\tt F}(\mathbf{L})$ is an object in the category $\mathbf{SRF}^*_{\nu\mathrm{M}}$. By Proposition \ref{compact-open stable prop}, $\{X_a\midsp a\in L\}={\tt KO}\mathcal{G}(\mathfrak{X})$ is the complex algebra of clopen elements $\type{L}^*\type{F}(\mathbf{L})$ and we then verified in Theorem \ref{choice-free lat rep} that the representation map \mbox{$a\mapsto X_a=\{x\in X\midsp a\in x\}$} is a lattice isomorphism $\mathbf{L}\iso{\tt KO}\mathcal{G}(X)$. Since $X^{\nu a}=\Gamma x_{\nu a}=(\Gamma\widehat{\nu}(x_a))'=(\Gamma x_a)^*=(X_a)^*$, the representation map is an isomorphism $\mathbf{L}\iso\type{L}^*\type{F}(\mathbf{L})$ of quasi-complemented lattices with a (minimal) quasi-complementation operation $\nu$.

For morphisms $h:\mathbf{L}_1\lra\mathbf{L}_2$, we let $p=h^{-1}:\filt(\mathbf{L}_2)\lra\filt(\mathbf{L}_1)$ and $q=h^{-1}:\idl(\mathbf{L}_2\lra\idl(\mathbf{L}_1)$ and we set $\type{F}(h)=(p,q)$.

The argument that $\type{F}(h):\type{F}(\mathbf{L}_2)\lra\type{F}(\mathbf{L}_1)$ is a frame morphism satisfying axioms (M1)--(M4) is a special instance of the argument given in the proof of \cite[Proposition~4.9]{duality2}, handling the general case of arbitrary normal lattice expansions. The proofs regarding axioms (M5) and (M6) were given in \cite[Proposition~5.6, Proposition~5.7]{duality2}. This establishes that $\type{F}:\mathbf{M}\lra(\mathbf{SRF}^*_{\nu\mathrm{M}})^\mathrm{op}$ is a contravariant functor satisfying $\mathbf{L}\iso\type{L}^*\type{F}(\mathbf{L})$.

Now let $\mathfrak{F}$ be a sorted residuated frame in the category $\mathbf{SRF}^*_{\nu\mathrm{M}}$. We have let $\type{L}(\mathfrak{F})=\mathfrak{F}^+=(\gpsi,\subseteq,\bigcap,\bigwedge,\emptyset,X,(\;)^*)$ be its full complex algebra of Galois stable sets and $\type{L}^*(\mathfrak{F})=(\type{KO}\mathcal{G}(\mathfrak{X}),\subseteq,\cap,\vee,\emptyset,X,(\;)^*)$ be its subalgebra of clopen elements. That the operation $(\;)^*$ restricts to clopen elements is clear, since $(X_a)^*=X_{\nu a}$. By Corollary \ref{frame conditions for complex algebra coro}, $\type{L}^*(\mathfrak{F})$ is an object in the category $\mathbf{M}$ of lattices with a minimal quasi-complementation operation.

If $\pi=(p,q):\mathfrak{F}_2\lra\mathfrak{F}_1$ is a frame morphism in $\mathbf{SRF}^*_{\nu\mathrm{M}}$, then by \cite[Corollary~3.21]{duality2}, $\type{L}^*(\pi)=\pi^{-1}:\mathcal{G}(X_1)\lra\mathcal{G}(X_2)$  is a complete lattice homomorphism of the complete lattices of stable sets of the frames. Given axiom (M4), it was established in \cite[Proposition~3.24, Lemma~3.25]{duality2} that $\pi^{-1}:\mathfrak{F}_1^+\lra\mathfrak{F}_2^+$ is in fact a homorphism of the full complex algebras of the frames. By axiom (M5), $\pi^{-1}$ preserves closed elements, hence  by \cite[Lemma~3.23]{duality2} it also preserves clopen elements (from which continuity of $\pi$ follows, since clopen stable elements are precisely the basic open sets in the topology).

The above argument has established that $\type{L}^*$ is a contravariant functor from the category $\mathbf{SRF}^*_{\nu\mathrm{M}}$ to the category $\mathbf{M}$ of lattices with a minimal quasi-complementation operation. We have already also established in Lemma~\ref{restriction to clopens} that for any object $\mathbf{L}$ in $\mathbf{M}$ we have an isomorphism $\mathbf{L}\iso\type{L}^*\type{F}(\mathbf{L})$ and it remains to argue that for any sorted residuated frame $\mathfrak{F}$ in the category $\mathbf{SRF}^*_{\nu\mathrm{M}}$ we also have that $\mathfrak{F}\iso\type{F}\type{L}^*(\mathfrak{F})$. 

For the second isomorphism, $\mathfrak{F}\iso\type{F}\type{L}^*(\mathfrak{F})$, we may base the argument either on \cite{sdl}, or on \cite{discr}. In \cite{sdl}, we argued that any sorted frame $\mathfrak{F}$ (L-frame in \cite{sdl}) subject to the axioms F1, F5--F7 is the frame dual to a lattice. More specifically, if $\mathcal{S}=(C_s)_{s\in L}$ is the family of compact-open Galois stable subsets of $X$, indexed in some set $L$, and since $\mathcal{S}$ is in fact a normal lattice expansion of similarity type $\tau$ (by the frame axioms and by the fact that $(\;)^*$ restricts to clopen elements, which are precisely the compact-open Galois stable subsets), then the indexing set $L$ inherits the structure of the family $\mathcal{S}$. Hence, it is a normal lattice expansion $\mathcal{L}$ of type $\tau$ isomorphic to $\mathcal{S}$ and $\mathfrak{F}$ is, up to isomorphism, its dual frame. In other words, $\type{L}^*(\mathfrak{F})=\mathcal{S}\iso\mathcal{L}$ and $\mathfrak{F}\iso \type{F}(\mathcal{L})$.   Therefore,  we have $\type{F}\type{L}^*(\mathfrak{F})\iso  \type{F}\type{L}^*\type{F}(\mathcal{L})\iso\type{F}(\mathcal{L})\iso\mathfrak{F}$.
\end{proof}

\begin{defn}\label{frame cats defn}\rm
  Categories of frames $\mathfrak{F}=(X,\upv,Y,S_\vee)$, where we set $\perp=S'_\vee$, corresponding to lattices with a quasi complementation operation are axiomatized by the axioms of Table \ref{frame axioms nuM} as well as one or more of the additional axioms below, as specified for each category.
  \begin{tabbing}
  (G) \hskip7mm\= $\perp$ is symmetric.\\
  (INV) \> Axioms (I1)--(I5) of Table~\ref{additional axioms table}.\\
  (O) \> $\perp$ is irreflexive.\\
  (D) \> All sections of the Galois dual relation $R'_\leq$ of the upper bound relation\\
  \>$R_\leq$ of Theorem \ref{upper bound rel prop} are stable.
  \end{tabbing}
  \begin{tabbing}
    $\mathbf{SRF}^*_{\nu\mathrm{M}}$\hskip7mm\= Table \ref{frame axioms nuM} axioms.\\
    $\mathbf{SRF}^*_{\nu\mathrm{G}}$\> Table \ref{frame axioms nuM} axioms + Axiom (G).\\
    $\mathbf{SRF}^*_{\nu\mathrm{INV} }$\> Table \ref{frame axioms nuM} axioms + Axiom (G) + Axioms (INV).\\
    $\mathbf{SRF}^*_{\nu\mathrm{O} }$\> Table \ref{frame axioms nuM} axioms + Axiom (G) + Axioms (INV) + Axiom (O).\\
    $\mathbf{SRF}^*_{\nu\mathrm{DMA}}$\> Table \ref{frame axioms nuM} axioms + Axiom (G) + Axioms (INV) + Axiom (D).\\
    $\mathbf{SRF}^*_{\nu\mathrm{BA}}$\> Table \ref{frame axioms nuM} axioms + Axiom (G) + Axioms (INV) +
       Axiom (O) +\\ \> + Axiom (D).
  \end{tabbing}
\end{defn}

\begin{thm}\rm\label{dualities thm}
The spectral duality of Theorem \ref{nuM duality} specializes to dualities for each of the frame categories of Definition \ref{frame cats defn} and their respective categories of bounded lattices with a (quasi) complementation operation.
\end{thm}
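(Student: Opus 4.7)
The plan is to establish each of the five specialized dualities by verifying two symmetric facts: (i) if $\mathbf{L}$ lies in the variety $\mathbb{V}$, then its canonical frame $\type{F}(\mathbf{L})$ satisfies the additional frame axioms defining the corresponding subcategory, and (ii) if $\mathfrak{F}$ satisfies those axioms, then the clopen complex algebra $\type{L}^*(\mathfrak{F})$ lies in $\mathbb{V}$. Since all the subcategories involved are \emph{full} (both on the algebraic and on the frame side), the morphism part requires no additional argument: the inverse-image morphism produced by $\type{F}$ and the preimage morphism produced by $\type{L}^*$ are already shown in Theorem~\ref{nuM duality} to respect the relevant frame relations and lattice operations, and the unit and counit components $\eta_\mathbf{L}$ and $\varepsilon_\mathfrak{F}$ restrict verbatim and remain natural isomorphisms on the subcategories.

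For direction (i), the groundwork is essentially carried out already in the proof of Theorem~\ref{fig 1 theorem}, which I would reread as a verification of frame-level conditions rather than as closure under canonical extensions. Specifically, for $\mathbb{G}$, the identity $a\leq\nu\nu a$ forces the canonical incompatibility relation $\perp=S'_\vee$ of equation~\eqref{galois dual defn} to be symmetric. For $\mathbb{INV}$, the calculations $\widetilde{\nu}(\widehat{\nu}(x))=x$ and $\widehat{\nu}(\widetilde{\nu}(y))=y$ in Case~3 of Theorem~\ref{fig 1 theorem} are exactly axioms (I4)--(I5), while axioms (I1)--(I3) for the derived relation $R_\wedge$ follow from its identification with the canonical relation $S_\wedge$ arising from the $\sigma$-extension perspective on $\nu$ (Figure~\ref{sigma-pi-canonical}) together with \cite[Lemma~4.3, Lemma~4.6]{duality2}. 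For $\mathbb{O}$, irreflexivity of $\perp$ comes from $e\wedge\nu e=0$ and properness of canonical filters (Case~4). For $\mathbb{DMA}$, distributivity turns meet into a normal operator of type $(1,1;1)$ whose canonical relation is the upper bound relation $R_\leq$, and the stability of sections of $R'_\leq$ is the generic canonical frame fact of \cite[Lemma~4.6]{duality2}. The case of $\mathbb{BA}$ combines the $\mathbb{O}$ and $\mathbb{DMA}$ conditions.

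For direction (ii), I would invoke Corollary~\ref{frame conditions for complex algebra coro} directly: items (2)--(6) there supply exactly the implications that $\type{L}(\mathfrak{F})=\mathfrak{F}^+$ lies in $\mathbb{G},\mathbb{INV},\mathbb{O},\mathbb{DMA}$, or $\mathbb{BA}$ whenever $\mathfrak{F}$ satisfies the respective additional axioms. The passage from $\type{L}$ to $\type{L}^*$ is harmless: by axiom (F5) the clopens are closed under finite meets and joins, and the operation $(\,\cdot\,)^*$ restricts to the clopen subalgebra, because on the canonical side we computed $(X_a)^*=X_{\nu a}$ in Lemma~\ref{restriction to clopens}, and on a general frame in $\mathbf{SRF}^*_{\nu\mathrm{M}}$ the strengthened axiom (F2) forces $S_\vee z$ to be a clopen element of $\gphi$ whenever $\Gamma z$ is clopen. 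Consequently the clopen subalgebra inherits every identity valid in $\type{L}(\mathfrak{F})$, so $\type{L}^*(\mathfrak{F})\in\mathbb{V}$.

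The main obstacle I anticipate is not a step in the logical argument but the bookkeeping for the involution case: axioms (I1)--(I5) were introduced as \emph{additional} axioms in Table~\ref{additional axioms table} and it must be verified cleanly that they hold in the canonical frame of any involutive lattice. The key observation, implicit in Figure~\ref{sigma-pi-canonical}, is that for an involution the two canonical relations $S_\vee$ (built from the type $(1;\partial)$ reading) and $S_\wedge$ (built from the type $(\partial;1)$ reading) are Galois-dual to each other via $\widehat{\nu},\widetilde{\nu}$, and the derived relation $R_\wedge$ of Table~\ref{additional axioms table} coincides with $S_\wedge$. Once this identification is recorded, axioms (I1)--(I3) follow from \cite[Lemma~4.3, Lemma~4.6]{duality2} applied to $S_\wedge$, and the restricted duality becomes a formal consequence of Theorem~\ref{nuM duality}.
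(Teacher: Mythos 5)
Your proposal is correct and follows essentially the same route as the paper: one direction is discharged by (the proof of) Theorem~\ref{fig 1 theorem}, read as verifying that the canonical frame satisfies the additional axioms of Definition~\ref{frame cats defn}; the other by Corollary~\ref{frame conditions for complex algebra coro}; and the remaining functorial/morphism bookkeeping is inherited verbatim from Theorem~\ref{nuM duality} since the subcategories are full. Your extra care about the involution axioms (I1)--(I5) holding in the canonical frame is exactly the content of Case~3 of Theorem~\ref{fig 1 theorem}, so nothing is missing.
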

\begin{proof}
That the double dual of a lattice in one of the varieties of Figure \ref{quasi-complements figure} is in the variety in question was verified in Theorem \ref{fig 1 theorem}. That the (full) complex algebra of a frame in one of the frame categories which satisfies one or more axioms from the list in Definition \ref{frame cats defn} is an algebra in the respective variety corresponding to the frame category was verified in Corollary \ref{frame conditions for complex algebra coro}. The rest of the duality argument for each of the cases is the same as in Theorem \ref{nuM duality}.
\end{proof}

\section{Concluding Remarks}
\label{final rems section}
In this article, we have provided alternative constructions for the choice-free representation and duality for Boolean algebras and Ortholattices, first given in \cite{choice-free-BA,choice-free-Ortho}. A Stone duality result for De Morgan algebras, using choice, was published by Bimb\'{o} in \cite{kata-ortho} and we have given here a choice-free version of the duality. 

Our background motivation has been the J\'{o}nsson-Tarski \cite{jt1,jt2} approach, constructing set-operators from relations to represent operators on Boolean algebras, a project that has been extended with Dunn's research on generalized Galois logics (gaggles). In \cite{duality2}, we presented a generalization of this project of relational representation to cases where distribution may not be assumed. The framework of \cite{duality2} was applied in this article to the case of bounded lattices with a quasi-complementation operator, recasting the duality of \cite{duality2} in a choice-free manner. By treating both De Morgan (and Boolean) algebras, as well as Ortholattices, it has been shown that the presence or not of distribution does not create any significant obstacle, as distribution in the complete lattice of stable sets of a sorted frame has been shown to be first-order definable. For the distributive case, the resulting semantics from the approach presented appears to have strong affinity to Holliday's possibility semantics \cite{holliday-posibility-frames}.

The approach presented can be extended to any normal lattice expansion, including the case of modal lattices studied in \cite{choice-free-dmitrieva-bezanishvili}, based on the framework developed in \cite{duality2}.

A variant choice-free semantic approach, based on Urquhart's representation, would be to consider in the lattice representation all disjoint filter-ideal pairs $(F,I)$, rather than the maximal ones only. Thereby the appeal to choice in the proof of existence of maximally disjoint pairs is no longer necessary.  This project has been in fact carried out by Allwein and this author in \cite{iulg-bounded} and it has been considered again and used in the recent semantic approach of Holliday \cite{holliday-fundamental}. In its application to logics with weak negation a further restriction is added in \cite{holliday-fundamental} requiring that any point $(F,I)$ of the canonical frame satisfies the condition $\{\neg a\midsp a\in F\}\subseteq I$ and a representation result \cite[Theorem~4.30]{holliday-fundamental} is reported. Duality is not discussed in \cite{holliday-fundamental}, but we believe that the lattice duality of \cite{iulg-bounded} can be of use in Holliday's approach.

Related to the present article is also Almeida \cite{Almeida09}, where the author
 examines representation issues for lattices with various negation operators, largely overlapping with the cases studied in this article, but working in the framework of RS-frames (generalized Kripke frames \cite{mai-gen}).
As in the present article, results are presented on canonicity of negation axioms \cite[Proposition~3.6 -- Proposition~3.12]{Almeida09} and first-order correspondents are calculated. The lattice representation assumed in the RS-frames approach is Hartung's \cite{hartung} and the proof that the set of stable sets is a canonical extension of the represented lattice depends on proving that the set $J^\infty(X)$ of completely join-irreduciple stable sets join-generates all stable sets. Since the set of closed elements in the frame is join-dense in the lattice of stable sets, the proof reduces to proving that $J^\infty(X)$  join generates all closed elements in the lattice of stable sets. This proof is given in \cite[Theorem~2.8]{dunn-gehrke} where an explicit use of Zorn's lemma is made.  Correspondence results in \cite{Almeida09} are established using the discrete duality between perfect lattices and RS-frames detailed in \cite[Section~4]{dunn-gehrke}, where recall that a lattice $\mathbf{L}$ is perfect if it is a complete lattice, $J^\infty(\mathbf{L})$ is join-dense in $\mathbf{L}$ and $M^\infty(\mathbf{L})$ (the set of completely meet irreducible elements of $\mathbf{L}$) is meet-dense in $\mathbf{L}$. All correspondence results proven in \cite[Theorem~4.4 -- Theorem~4.9]{Almeida09} are based on the above discrete duality result of \cite{dunn-gehrke}.

As we pointed out in \cite{kata2z}, the RS-frames approach and the approach taken by this author can be used equally well in studying specific problems related to non-distributive systems and the choice depends on the objectives at hand. In the case of the present article, working without any appeal to the axiom of choice, or any of its equivalents, has been the main objective, hence working with RS-frames  was not a suitable approach.

\end{document}